\newcommand{\cmark}{\ding{51}}
\newcommand{\xmark}{\ding{55}}
\newcommand{\lldash}{\rotatebox{55}{$\vdash$}}
\newcommand{\rrdash}{\rotatebox{120}{$\vdash$}}
\newcommand{\cat}[1]{\ensuremath{\mathbf{#1}}}
\newcommand{\p}{\ensuremath{{}_{\bullet}}}
\newcommand{\GLat}{\cat{GLat}}
\newcommand{\FVect}{\cat{FVect}}
\newcommand{\MVect}{\cat{MVect}}
\newcommand{\Matr}{\cat{Matr}}
\newcommand{\SMatr}{\cat{SMatr}}
\newcommand{\FMatr}{\cat{FMatr}}
\newcommand{\LMatr}{\cat{LMatr}}
\newcommand{\Matrp}{\cat{Matr_\bullet}}
\newcommand{\SMatrp}{\cat{SMatr_\bullet}}
\newcommand{\FMatrp}{\cat{FMatr_\bullet}}
\newcommand{\LMatrp}{\cat{LMatr_\bullet}}
\newcommand{\I}{\ensuremath{\mathcal{I}}}
\newcommand{\F}{\ensuremath{\mathcal{F}}}
\newcommand{\id}[1]{\ensuremath{\mathrm{id}_{#1}}}
\newcommand{\Mat}{\ensuremath{M}}
\DeclareMathOperator{\si}{si}
\DeclareMathOperator{\rank}{rk}
\DeclareMathOperator{\clos}{cl}
\DeclareMathOperator{\card}{\#\hspace*{-.5mm}}
\DeclareMathOperator{\supp}{supp}
\newcommand{\sip}{\ensuremath{\si_{\bullet}}}
\newcommand{\ie}{\textit{i.e.}\xspace}
\title{The category of matroids}
\author{Chris Heunen \and Vaia Patta}
\institute{C. Heunen \at University of Edinburgh. \\ \email{chris.heunen@ed.ac.uk} \\ Supported by EPSRC Fellowship EP/L002388/1. 
 \and V. Patta \at University of Oxford. \\ \email{vaia.patta@cs.ox.ac.uk} \\ Supported by EPSRC Grant EP/K503113/1. }
\date{\today}
\begin{document}
\maketitle
        
\begin{abstract}
  The structure of the category of matroids and strong maps is investigated: it has coproducts and equalizers, but not products or coequalizers; there are functors from the categories of graphs and vector spaces, the latter being faithful; there is a functor to the category of geometric lattices, that is nearly full; there are various adjunctions and free constructions on subcategories, inducing a simplification monad; there are two orthogonal factorization systems; some, but not many, combinatorial constructions from matroid theory are functorial. Finally, a characterization of matroids in terms of optimality of the greedy algorithm can be rephrased in terms of limits.
\end{abstract}

\section{Introduction}\label{sec:intro}

Matroids are very general structures that capture the notion of independence. They were introduced by Whitney in 1935 as a common generalization of independence in linear algebra and independence in graph theory~\cite{whitney:matroids}. They were linked to projective geometry by Mac Lane shortly thereafter~\cite{maclane:projectivegeometry}, and have found a great many applications in geometry, topology, combinatorial optimization, network theory, and coding theory since~\cite{white:matroids,oxley:matroids}. However, matroids have mostly been studied individually, and most authors do not consider maps between matroids. The only explicitly categorical work we are aware of is~\cite{alhawary:thesis,alhawary:free,alhawarymcrae:axiomatic,anderson1999matroid,lu2010categorical,stamps2013topological}. The purpose of this article is to survey the properties of the category of matroids.

Of course, speaking of ``the'' category of matroids entails a choice of morphisms. There are compelling reasons to choose so-called strong maps:
\begin{itemize}
  \item They are a natural choice of structure-preserving functions.
  \item The construction making matroids out of vector spaces is then functorial, as we will show in Section~\ref{sec:functors}.
  \item There is then an elegant orthogonal factorization system, as we will show in Section~\ref{sec:factorization}, that connects to the important topic of minors, as we will show in Section~\ref{sec:deletioncontraction}.
\end{itemize}
The resulting category has some limits and colimits, but not all, as we will show in Section~\ref{sec:limits}, see Figure~\ref{fig:overview}. It will also turn out that many combinatorial constructions from matroid theory are not even functorial, as we will show in Section~\ref{sec:constructions}, see Figure~\ref{fig:overview}. Weaker choices of morphisms, such as the so-called weak maps and comaps~\cite{white:matroids}, lead to less well-behaved categories.
However, as we show in Section~\ref{sec:greedy}, strong maps allow us to characterize matroids in terms of optimality of the greedy algorithm via limits.

To complete the overview of this article: Section~\ref{sec:category} recalls the various equivalent definitions of the objects of the category that we will need, and Section~\ref{sec:adjunctions} establishes adjunctions between various subcategories of well-studied types of matroids, showing that simplification is monadic. See Figure~\ref{fig:adjunctions}.

One feature of matroid theory that we leave for future work is duality: functoriality of this construction needs a choice of morphisms that stands to strong maps as relations stand to functions. Finally, a variation on the concept of matroid called bimatroids~\cite{kung1978bimatroids} have a distinctly 2-categorical flavour to them, that we do not go in to here.

\begin{figure}
  \centering
  \begin{tabular}{|l|c|}\hline
  (Co)limit & present \\
  \hline
  Products & \xmark \\
  Equalizers & \cmark \\
  Pullbacks & \xmark \\
  Coproducts & \cmark \\
  Coequalizers & \xmark \\
  Pushouts & \xmark \\
  Exponentials & \xmark \\ \hline
  \end{tabular}
  \hfill
  \begin{tabular}{|l|c|}\hline
    Construction & functorial \\
    \hline
    Duality & \xmark \\
    Addition & \cmark \\
    Deletion & \cmark \\
    Contraction & \cmark \\
    Minor & \cmark \\
    Extension & \xmark \\
    Truncation & \xmark \\
    Erection & \xmark \\
    \hline
  \end{tabular}
  \hfill
  \begin{tabular}{|l|c|}\hline
    Construction & monoidal \\
    \hline
    Union & \xmark \\
    Intersection & \xmark \\
    Half-dual union & \xmark \\
    Intertwining & \xmark \\
    Parallel connection & \cmark \\
    Series connection & \cmark \\
    \hline
  \end{tabular}
  \caption{Overview of the categorical nature of matroids}      
  \label{fig:overview}
\end{figure}

\begin{figure}
  \[\xymatrix@C+10ex@R+5ex{
    *+++++[d]{\mbox{Matroids}}
    \ar@{}|-{\perp}[r]
    \ar@<-1ex>@{<-^{)}}[r]
    \ar@<-1ex>[d]
    \ar[dr]
    \ar@<-1ex>@{}|(.55){\lldash}[dr]
    \ar@<-4ex>[dr]
    & 
    *+++++[d]{\mbox{Loopless matroids}}
    \ar@<-1ex>@{<-}[l]
    \ar@<1ex>[d]
    \ar@<-2ex>[dl]
    \\
    *++++++[d]{\mbox{Simple matroids}}
    \ar@{}|-{\perp}[r]
    \ar@<-1ex>[r]
    \ar@<-1ex>@{^{(}->}[u]
    \ar@{}|-{\dashv}[u]
    \ar@{^{(}->}[ur]
    &
    *++++++[d]{\mbox{Free matroids}}
    \ar@<-1ex>@{^{(}->}[l]
    \ar@{}|-{\dashv}[u]
    \ar@<1ex>@{_{(}->}[u]
    \ar@<-2ex>@{_{(}->}[ul]
    \ar@<2ex>[ul]
  }\]
  \caption{Overview of adjunctions involving the category of matroids}
  \label{fig:adjunctions}
\end{figure}

\section{The category}\label{sec:category}

Matroids, the objects of our category of interest, can be defined in many equivalent ways. We list some that are used later, writing $\card X$ for the cardinality of a set $X$. Although some of the theory of matroids goes through for infinite sets~\cite{bruhnetal:infinitematroids}, that situation is much more intricate. Therefore we will only consider the finite case.

\begin{definition}\label{def:matroid}
  A \emph{matroid} $M$ consists of a finite \emph{ground set} $|M|$ with, equivalently:
  \begin{itemize}
  \item a family of $\mathcal{I}$ of subsets of $|M|$, called the \emph{independent sets}, satisfying:
    \begin{itemize}
      \item nontrivial: the empty set is independent;
      \item downward closed: if $I\in\mathcal{I}$ and $J \subseteq I$ then also $J\in\mathcal{I}$;
      \item \emph{independence augmentation}: if $I,J\in\mathcal{I}$ and $\card I<\card J$, then $I\cup\{e\}\in\mathcal{I}$ for some $e\in J\setminus I$.
    \end{itemize}
    a maximal independent set is called a \emph{basis}, and their collection is denoted $\mathcal{B}$;
  \item a family $\mathcal{F}$ of subsets of $|M|$, called the \emph{closed sets} or \emph{flats}, satisfying:
    \begin{itemize}
      \item nontrivial: $|M|$ itself is closed;
      \item closed under intersection: if $F,G \in \mathcal{F}$ then also $F \cap G \in \mathcal{F}$;
      \item \emph{partitioning}: if $\{F_1,F_2,\ldots\}$ are the minimal flats properly containing a flat $F$, then  $\{F_1\setminus F,F_2 \setminus F, \ldots \}$ partitions $|M| \setminus F$.      
    \end{itemize}
  \item a \emph{rank function} $\rank \colon 2^{|M|} \to \mathbb{N}$, satisfying:
    \begin{itemize}
      \item bounded: $0 \leq r(X) \leq \card |M|$ for all $X \subseteq |M|$;
      \item monotonic: if $X\subseteq Y\subseteq |M|$, then $\rank(X)\leq \rank(Y)$;
      \item valuation: if $X,Y\subseteq |M|$, then $\rank (X \cup Y) + r(X \cap Y) \leq \rank(X) + \rank(Y)$.
    \end{itemize}
  \end{itemize}
\end{definition}

To see how to turn the data of one of the equivalent definitions into the data of another, define the \emph{closure} operation $\clos \colon 2^{|M|} \to 2^{|M|}$ by 
\[
  \clos(X) = \big\{x\in |M| \;\big|\; \rank(X\cup \{x\})=\rank(X) \big\}.
\]
A closed set or flat is then a subset of $|M|$ which equals its closure, and $\rank(X)$ is the size of the largest independent set contained in $X \subseteq |M|$.
Maximal flats properly contained in $|M|$ are also called \emph{hyperplanes}; their collection is denoted by $\mathcal{H}$.

Some elements of a matroid are of particular interest.

\begin{definition}
  A \emph{loop} is an element of a matroid that is not contained in any independent set, or equivalently, an element that is contained in all flats. An \emph{isthmus} is an element that is included in every basis. Nonloop elements of the same rank-1 flat are called \emph{parallel}. 
\end{definition}

The following special types of matroids are of special interest in matroid theory.

\begin{definition}
  A matroid is \emph{pointed} when it has a distinguished loop, denoted $\bullet$ and called \emph{the point}.
  A (pointed) matroid is:
  \begin{itemize}
    \item \emph{loopless} when it has no loops (other than the point);
    \item \emph{simple} when it has no loops (other than the point) or parallel elements;
    \item \emph{free} when every subset (not containing the point) is independent.
  \end{itemize}
\end{definition}

Here are some typical examples.

\begin{example}\label{ex:vectorspace}
  Any finite vector space $V$ gives rise to a matroid $\Mat(V)$, whose ground set is $V$, and whose independent sets are those subsets of $V$ that are linearly independent; flats correspond to vector subspaces of $V$, the closure operation takes linear spans, and the rank function computes the dimension of the linear span. The matroid $\Mat(V)$ is pointed when choosing the distinguished loop $\bullet$ to be 0. The pointed matroid $\Mat(V)$ is free only when $V$ is zero-dimensional or when $V=\mathbb{Z}_2$.
  (In fact, infinite vector spaces also satisfy the axioms for independence, but we will not consider the infinite setting here.)
\end{example}

\begin{example}\label{ex:graph}
  Any undirected multigraph $G$ gives rise to a matroid $M(G)$, whose ground set consists of the edges, and where a subset is independent when it contains no cycles. Loops of $M(G)$ are precisely edges of $G$ between a vertex and itself. Isthmuses are edges that are not contained in any cycle. Parallel elements of $M(G)$ are precisely parallel edges of $G$.
  We can point $M(G)$ by choosing a loop. The matroid $M(G)$ is simple when it has no loops and no parallel edges. It is free only when $G$ has no cycles.
\end{example}

\begin{example}\label{ex:lattice}
  We can specify a matroid $M$ by giving (the Hasse diagram of) its partially ordered set $L(M)$ of flats. For example:
  \[\xymatrix@R-2ex@C-8ex{
    &&&&& \{a,b,c,d,e\} \\
    \{a,b\} \ar@{-}[urrrrr] && \{a,d\} \ar@{-}[urrr] && \{a,e\} \ar@{-}[ur] && \{b,d\} \ar@{-}[ul] && \{b,e\} \ar@{-}[ulll] && \{c,d,e\} \ar@{-}[ulllll]\\
    & \{a\} \ar@{-}[ul] \ar@{-}[ur] \ar@{-}[urrr] && \{b\} \ar@{-}[ulll] \ar@{-}[urrr] \ar@{-}[urrrrr] && \{c\} \ar@{-}[ulllll] \ar@{-}[urrrrr] && \{d\} \ar@{-}[ul] \ar@{-}[urrr] \ar@{-}[ulllll] && \{e\} \ar@{-}[ur] \ar@{-}[ul] \ar@{-}[ulllll] \\
    &&&&& \emptyset \ar@{-}[u] \ar@{-}[ull] \ar@{-}[ullll] \ar@{-}[urr] \ar@{-}[urrrr]
  }\]
  is a matroid with ground set $\{a,b,c,d,e\}$.
  As we will see later, any geometric lattice $L$ gives rise to a matroid $M(L)$ this way. 
  The matroid $M(L)$ is simple. It is free only when $L$ is a full powerset lattice.
\end{example}

Matroids form a category with strong maps between them.

\begin{definition}\label{def:strongmap}
  A \emph{strong map} from $M$ to $N$ is a function $f \colon |M| \to |N|$ such that the inverse image of any flat in $N$ is a flat in $M$. Write $\Matr$ for the category of matroids and strong maps, and $\LMatr$, $\SMatr$, $\FMatr$ for the full subcategories of loopless, simple, and free matroids.

  A strong map between pointed matroids is \emph{pointed} when it sends the point to the point. Write $\Matrp$ for the category of pointed matroids and pointed strong maps, and $\LMatrp$, $\SMatrp$, $\FMatrp$ for the full subcategories of loopless, simple, and free matroids.
\end{definition}

The flats of a matroid $M$, when ordered by inclusion, form a geometric lattice $L(M)$, where the height of an element is the rank of the corresponding flat; and conversely, every geometric lattice is the lattice of some matroid~\cite[Theorem~1.7.5]{oxley:matroids}. This gives another way to think about strong maps between matroids.

\begin{lemma}\label{lem:latticemaps}
  For $M,N\in\Matr$ and $f \colon |M| \to |N|$ the following are equivalent:
  \begin{enumerate}[label=(\alph*)]
    \item $f$ is a strong map;
    \item $\rank(f(Y))-\rank(f(X))\leq \rank(Y)-\rank(X)$ for all $X\subseteq Y\subseteq |M|$;
    \item the function $L(f) \colon L(M) \to L(N)$ given by $X \mapsto \clos(f(X))$ preserves joins and sends elements of height 1 to elements of height 0 or 1.
  \end{enumerate}
\end{lemma}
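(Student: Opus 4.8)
The plan is to read condition (a) as a statement about the closure operator and then run the cycle (a) $\Rightarrow$ (b) $\Rightarrow$ (c) $\Rightarrow$ (a). First I would record the standard reformulation of a strong map: $f$ satisfies (a) if and only if $f(\clos(X)) \subseteq \clos(f(X))$ for every $X \subseteq |M|$. For the forward implication, take $F = \clos(f(X))$, a flat of $N$; then $f^{-1}(F)$ is a flat of $M$ containing $X$, hence containing $\clos(X)$, so $f(\clos(X)) \subseteq F$. For the backward implication, given a flat $F$ of $N$ put $X = f^{-1}(F)$; the inclusion forces $f(\clos(X)) \subseteq \clos(f(X)) \subseteq \clos(F) = F$, whence $\clos(X) \subseteq f^{-1}(F) = X$. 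The second device I would use throughout is the reduction of any rank comparison to single-element extensions: along a chain $X = X_0 \subseteq \cdots \subseteq X_k = Y$ adding one element at a time, both sides of (b) telescope, so (b) is equivalent to its instances $Y = X \cup \{e\}$.

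With these in hand the two easy edges are quick. For (a) $\Rightarrow$ (b): when $\rank(X \cup \{e\}) = \rank(X) + 1$ the right-hand side of (b) is $1$ and the left-hand side is at most $1$, since $f(X \cup \{e\}) = f(X) \cup \{f(e)\}$; and when $e \in \clos(X)$ the closure reformulation gives $f(e) \in \clos(f(X))$, so the left-hand side is $0$. For (b) $\Rightarrow$ (c): join-preservation of $L(f)$ unwinds, using $\clos(\clos(A) \cup \clos(B)) = \clos(A \cup B)$, to the identity $\clos(f(\clos(F \cup G))) = \clos(f(F \cup G))$ on flats $F,G$, and its only nontrivial inclusion is precisely the closure reformulation of (a) that we already derived from (b); preservation of the bottom flat $\clos(\emptyset)$ is the instance $X = \emptyset$, i.e. loops of $M$ map into loops of $N$. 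The height clause is immediate from (b) applied with $X = \emptyset$ and $Y = F$ a rank-$1$ flat: $\rank(f(F)) \leq \rank(F) = 1$.

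The hard direction, and the step I expect to be the main obstacle, is (c) $\Rightarrow$ (a). Here I would exploit that geometric lattices are atomistic: every flat $F$ of $M$ is the join $\bigvee_{x \in X} \clos(\{x\})$ of the rank-$1$ flats below it, for any $X$ with $\clos(X) = F$. Join-preservation then yields $L(f)(\clos(X)) = \bigvee_{x} \clos(f(\clos(\{x\})))$, and monotonicity of $L(f)$ together with the height clause places every $f(e)$ with $e \in \clos(X)$ inside $\clos(f(\clos(X)))$ (loops are handled separately by bottom-preservation). To reach (a) I must upgrade this to $f(e) \in \clos(f(X))$, that is, prove $\clos(f(\clos(X))) = \clos(f(X))$, and this is where the real work lies: the lattice $L(M)$ records neither loops nor the multiplicities of parallel elements, so the per-atom identity $\clos(f(\clos(\{x\}))) = \clos(\{f(x)\})$ is not formal and must be extracted from the hypotheses with care. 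Controlling how $f$ treats an entire parallel class relative to a chosen representative $x$ — so that the closure of the image of the whole rank-$1$ flat collapses onto the closure of $\{f(x)\}$ — is the delicate point, and I would organize the remaining argument around exactly this reconciliation of the set-level map with its lattice shadow.
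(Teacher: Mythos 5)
The paper offers no proof of this lemma at all---it simply cites \cite[Propositions~8.1.3 and~8.1.6]{white:matroids}---so there is no in-paper argument to compare against, and you are doing strictly more work than the authors. The pieces you do carry out are correct: the equivalence of (a) with the closure inequality $f(\clos(X))\subseteq\clos(f(X))$, the telescoping reduction of (b) to single-element extensions, (a)$\Rightarrow$(b), and (b)$\Rightarrow$(c). One small bookkeeping point: you invoke ``the closure reformulation of (a) that we already derived from (b)'' without having actually derived it from (b); the derivation is easy (take $Y=\clos(X)$ in (b) to force $\rank(f(\clos(X)))=\rank(f(X))$, hence $f(\clos(X))\subseteq\clos(f(X))$) and, once written, it already gives you (b)$\Rightarrow$(a), so only the edge out of (c) remains.

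That remaining edge is where the genuine gap sits, and it is not merely delicate: the per-atom identity $\clos(f(\clos(\{x\})))=\clos(\{f(x)\})$ that your (c)$\Rightarrow$(a) argument needs is \emph{not} a consequence of (c) as literally stated, so the step you defer cannot be completed in this generality. Take $M$ with $|M|=\{a,b\}$ and $a,b$ parallel, so $\mathcal{F}_M=\{\emptyset,\{a,b\}\}$; take $N$ with a loop $\ell$ and a nonloop $y$, so $\mathcal{F}_N=\{\{\ell\},\{\ell,y\}\}$; and let $f(a)=\ell$, $f(b)=y$. Then $L(f)$ is the unique isomorphism between the two-element chains $L(M)$ and $L(N)$, hence preserves all joins and sends the atom to an atom, yet $f^{-1}(\{\ell\})=\{a\}$ is not a flat, so (a) fails (and (b) fails for $\{a\}\subseteq\{a,b\}$). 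The obstruction is exactly the one you flagged: $L(f)$ cannot see how $f$ distributes a single parallel class over loops and nonloops of $N$. To close the cycle you must either restrict to simple matroids, where $\clos(\{x\})=\{x\}$ makes the per-atom identity trivial and your atomistic argument goes through verbatim, or read (c) as additionally requiring $\clos(f(\clos(X)))=\clos(f(X))$ for every subset $X$ (equivalently $f(\clos(\{x\}))\subseteq\clos(\{f(x)\})$ for each $x$), which is how the compatibility of $f$ with its lattice shadow is packaged in the cited source. As written, condition (c) is strictly weaker than (a) and (b), so the proof you are looking for does not exist; the statement itself needs the extra hypothesis.
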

\begin{proof}
  See~\cite[Propositions~8.1.3 and~8.1.6]{white:matroids}.
  \qed
\end{proof}

The assignment $M \mapsto |M|$ forms a forgetful functor $|-| \colon \Matr \to \cat{FinSet}$ from the category of matroids to the category $\cat{FinSet}$ of finite sets and functions.

\begin{theorem}\label{thm:finset}
  There is a series of adjunctions $F\dashv|-|\dashv C \dashv (-)_0$ given by
  \begin{align*}
    \mathcal{F}_{F(X)} & = 2^X, & F(f) & = f\text{,} \\
    \mathcal{F}_{C(X)} & = \{X\}, & C(f) & = f\text{,} \\
    (M)_0 &= F_0, & (f)_0 & = f|_{F_0} \text{.}
  \end{align*}
  where $F_0$ denotes the bottom element of $L(M)$. There are no further adjoints.
\end{theorem}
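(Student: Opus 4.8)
The plan is to verify the three adjunctions by producing explicit natural bijections of hom-sets, and then to rule out further adjoints at the two ends of the chain. The common thread in all three adjunctions is that the defining clause of a strong map---that preimages of flats are flats---degenerates to a triviality, so that the strong maps are, up to an evident (co)restriction, just functions. (That $F$ and $C$ land in $\Matr$ at all, i.e.\ that $2^X$ and $\{X\}$ satisfy the flat axioms, and that both are functorial, is a routine preliminary check.)

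For $F \dashv |-|$ I would use that every subset of $X$ is a flat of $F(X)$, whence the strong-map condition on a function $X \to |M|$ holds automatically; thus $\Matr(F(X),M) = \cat{FinSet}(X,|M|)$. For $|-| \dashv C$ I would use that the sole flat of $C(X)$ is $X$, whose preimage under any $f\colon|M|\to X$ is all of $|M|$, always a flat; thus $\Matr(M,C(X)) = \cat{FinSet}(|M|,X)$. For $C \dashv (-)_0$, a function $f\colon X \to |M|$ is strong into $M$ iff $f^{-1}(G)=X$ for every flat $G$ of $M$ (the only flat of $C(X)$ being $X$), that is, iff $f(X)$ lies in the intersection of all flats, namely the bottom flat $F_0=(M)_0$; thus $\Matr(C(X),M)=\cat{FinSet}(X,(M)_0)$. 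In each case the bijection leaves the underlying function untouched, so naturality in both variables is automatic. I would also check that $(-)_0$ is a functor: for a strong map $f\colon M\to N$, the flat $f^{-1}(F_0^N)$ of $M$ contains the bottom flat $F_0^M$, so $f(F_0^M)\subseteq F_0^N$ and $f|_{F_0}$ is well defined.

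To see that $F$ has no left adjoint, I would argue that such an adjoint would make $F$ a right adjoint and hence preserve the terminal object. By the adjunction $|-|\dashv C$ just established, the terminal object of $\Matr$ is $C(1)$, the one-element matroid whose only flat is its ground set; but $F(1)$, the one-element matroid having two flats, is not isomorphic to it and hence not terminal---a contradiction.

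The subtle end of the chain, and the step I expect to be the main obstacle, is that $(-)_0$ has no right adjoint: the argument dual to the one for $F$ fails, since $(-)_0$ does preserve the initial object (the empty matroid has no loops). I would instead argue by a double evaluation. Supposing $(-)_0\dashv R$, so that $\cat{FinSet}((M)_0,X)\cong\Matr(M,R(X))$ naturally, I would first take $M=F(1)$: since $(F(1))_0=\emptyset$ the left side is a singleton, while by $F\dashv|-|$ the right side is $\cat{FinSet}(1,|R(X)|)$, forcing the ground set $|R(X)|$ to be a singleton. Taking instead $M=C(1)$, whose loop set is a singleton, the left side is $\cat{FinSet}(1,X)$ while by $C\dashv(-)_0$ the right side is $\cat{FinSet}(1,(R(X))_0)$, forcing $\card (R(X))_0=\card X$. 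But $(R(X))_0\subseteq|R(X)|$ has at most one element, so $\card X\le 1$ for every finite set $X$, which is absurd. Hence no right adjoint exists, and the chain admits no further adjoints.
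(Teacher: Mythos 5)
Your proof is correct. The three adjunctions are handled essentially as in the paper (your hom-set identifications $\Matr(F(X),M)=\cat{FinSet}(X,|M|)$, $\Matr(M,C(X))=\cat{FinSet}(|M|,X)$, $\Matr(C(X),M)=\cat{FinSet}(X,F_0)$ are exactly the paper's units and transposes in disguise), and your argument that $(-)_0$ has no right adjoint is, modulo packaging, also the paper's: the paper evaluates at a nonempty loopless matroid to force $\card B(X)=1$ and then at a matroid with a loop and $\card X>1$ to get a contradiction, which is precisely your pair $F(1)$, $C(1)$ run through the previously established adjunctions. Where you genuinely diverge is in showing $F$ has no left adjoint. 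The paper constructs a specific rank-3 matroid on four nonloops plus a loop and performs a somewhat delicate count of morphisms $K(M)\to X$ versus $M\to F(X)$, splitting into cases on $\card K(M)$ and analysing what the unit can do to parallel elements. You instead observe that a right adjoint must preserve the terminal object, that $C(1)$ is terminal in $\Matr$ by the already-established $|-|\dashv C$, and that $F(1)\not\cong C(1)$ because the unique bijection $C(1)\to F(1)$ fails to be strong (the preimage of the flat $\emptyset$ of $F(1)$ is $\emptyset$, which is not a flat of $C(1)$). This is shorter, avoids the counting entirely, and leans only on facts proved earlier in the same theorem; the paper's counting argument has the mild virtue of exhibiting concretely how the unit's universal property fails, but yours is the cleaner route.
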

\begin{proof}
  Functoriality is clear. To see that $F \dashv |-|$, note that for every set $X$, every matroid $M$, and every function $f \colon X\to|M|$, there must exist a function $\eta_X \colon X\to X$ and a unique strong map $\hat f \colon F(X)\to M$ satisfying $f=\hat f\circ\eta_X$, namely $\eta_X=\id{X}$ and $\hat f=f$.

  Similarly, to see that $|-| \dashv C$, observe that for every strong map $f \colon M\to C(X)$ there must exist a strong map $\eta_M \colon M\to C(|M|)$ and a unique function $\hat f \colon |M|\to X$ satisfying $f=\hat f\circ\eta_M$, namely $\eta_M(x)=x$ and $\hat f=f$.
  That $C \dashv (-)_0$ is seen similarly: set $\eta_X=\id{X}$ and $\hat f=f$.

  Now suppose $K$ is left adjoint to $F$. 
  Let $M$ be the (pointed) matroid with flats
  \begin{align*}
        \big\{ & \{l\},\{l,a\},\{l,b\},\{l,c\},\{l,d\},\\
        & \{l,a,b\},\{l,a,c\},\{l,a,d\},\{l,b,c\},\{l,b,d\},\{l,c,d\},\{l,a,b,c,d\}\big\}.
  \end{align*}
  We will count the morphisms $K(M) \to X$ and $M \to F(X)$.
  Because the subset $\eta(M) \subseteq K(M)$ has at most 4 elements, we may assume that $\card K(M) \leq 4$.
  If $\card K(M) \leq 3$, pick $\card X=2$ for fewer maps $K(M)\to X$ than $M\to F(X)$. 
  If $\card K(M) = 4$, pick $\card X=4$. Since the unit $\eta$ of the adjunction cannot map nonloops to loops, it must map nonloops $a\neq b$ of $M$ to the same nonloop of $K(M)$. But this prevents surjective $f \colon M \to F(X)$ mapping $a,b$ to the same nonloop from having a mate $\hat{f} \colon K(M)\to X$. Thus $F$ has no left adjoint.

  Finally, suppose $B$ were a right adjoint to $(-)_0$. Let $M$ be a nonempty loopless matroid. Then there is a unique function $(M)_0 \to X$, so there can only exist one strong map $f\colon M\to B(X)$. Hence $\card B(X)=1$ for all $X$. But this cannot be a right adjoint to $(-)_0$, for if $M$ instead is a matroid with at least one loop and $\card X>1$, there are multiple functions $(M)_0 \to X$.
  \qed
\end{proof}

Matroids of the form $F(X)$ for some finite set $X$ are precisely free matroids. We will call matroids of the form $C(X)$ \emph{cofree} matroids.

The fibre of $|-|$ over any finite set $X$ is partially ordered: if $M$ and $N$ are matroids with $|M|=|N|=X$, then $M\le N$ if and only if $\mathcal{F}_M\subseteq\mathcal{F}_N$. This resembles the situation in general topology, with $\le$ indicating a ``finer'' matroid structure, $F(X)$ being the finest (most closed sets) one, and $C(X)$ being the coarsest (fewest closed sets) one.

\section{Limits and colimits}\label{sec:limits}

We now examine limits and colimits in $\Matr$ and its subcategories. We give proofs and counterexamples to accommodate different variations and to repair mistakes in the literature.

\begin{remark}\label{rem:constructlimit}
  In some ways, including the computation of limits and colimits, the category of matroids is analogous to the category of topological spaces and continuous functions.

  Let \cat{D} be a diagram in $\Matr$. To construct its limit (if it exists) first take the limit $L$ of $|\cat{D}|$ in $\cat{FinSet}$ and denote the limit cone by $\lambda_X \colon L\to|X|$ for $X\in\cat{D}$. Then the limit of \cat{D} exists if and only if there is a coarsest matroid structure on $L$ making the $\lambda_X$ strong, that is, if and only if there is a coarsest matroid structure $M$ on $L$ such that $\{\lambda_X^{-1}(S) \mid X\in\cat{D},S\in\mathcal{F}_X\}\subseteq\mathcal{F}_M$. 

  Similarly, for the colimit of \cat{D}, take the colimit $K$ in \cat{FinSet} and form a colimit cocone $\kappa_X \colon |X|\to K$ in \cat{FinSet}. Then the colimit exists if and only if there is a finest matroid structure $N$ on $K$ such that $\mathcal{F}_N\subseteq\{S\subseteq K \mid \forall X\in\cat{D},\kappa_X^{-1}(S)\in\mathcal{F}_X\}$. 
\end{remark}

Clearly the empty matroid is an initial object in all subcategories of matroids we consider.
The one-element matroid where the element is a loop is a terminal object in all categories of matroids we consider.

The functor $|-| \colon \Matr \to \cat{FinSet}$ restricts to an isomorphism of categories $\FMatr \to \cat{FinSet}$. It follows that $\cat{FMatr}$ has all finite (co)limits. We turn our attention to larger matroid categories, starting with coproducts and equalizers, that are known to exist, but may alternatively be described as in Remark~\ref{rem:constructlimit}.

\begin{proposition}\label{prop:coproducts}
  The categories $\SMatr$, $\LMatr$, and $\Matr$ have coproducts.
\end{proposition}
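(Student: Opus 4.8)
The plan is to realize the coproduct of two matroids $M$ and $N$ as their \emph{direct sum} $M\oplus N$, and then to reduce an arbitrary finite coproduct to binary ones together with the empty matroid, which is already known to be initial. Concretely, I would set $|M\oplus N|=|M|\sqcup|N|$ and declare the flats to be
\[
  \F_{M\oplus N}=\bigl\{F\sqcup G \;\big|\; F\in\F_M,\ G\in\F_N\bigr\}.
\]
With the coproduct injections $\kappa_M,\kappa_N$ of $\cat{FinSet}$, a subset $S\subseteq|M|\sqcup|N|$ satisfies $\kappa_M^{-1}(S)=S\cap|M|$ and $\kappa_N^{-1}(S)=S\cap|N|$, so this collection is exactly $\{S \mid \kappa_M^{-1}(S)\in\F_M,\ \kappa_N^{-1}(S)\in\F_N\}$, the candidate collection of Remark~\ref{rem:constructlimit}. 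Hence it suffices to check that $\F_{M\oplus N}$ is itself a matroid structure: being equal to the candidate collection, it is automatically the finest matroid structure contained in it, and so by that remark it is the coproduct.

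The nontrivial and intersection axioms are immediate bookkeeping: $|M|\sqcup|N|=|M|\sqcup|N|$ is a flat, and $(F_1\sqcup G_1)\cap(F_2\sqcup G_2)=(F_1\cap F_2)\sqcup(G_1\cap G_2)$ is a flat since intersections of flats are flats in each summand. The step I expect to carry the real content is \emph{partitioning}. The key observation is that a flat properly containing $F\sqcup G$ and minimal with this property must enlarge exactly one of the two components: if a flat $F'\sqcup G'$ had both $F'\supsetneq F$ and $G'\supsetneq G$, then $F'\sqcup G$ would be a strictly smaller flat still properly containing $F\sqcup G$, contradicting minimality. Therefore the minimal flats over $F\sqcup G$ are precisely the $F_i\sqcup G$, for $F_i$ minimal over $F$ in $M$, together with the $F\sqcup G_j$, for $G_j$ minimal over $G$ in $N$. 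Their differences with $F\sqcup G$ are the $F_i\setminus F$ and the $G_j\setminus G$, which partition $|M|\setminus F$ and $|N|\setminus G$ respectively by partitioning in $M$ and in $N$; since these live in disjoint summands, together they partition $(|M|\setminus F)\sqcup(|N|\setminus G)=(|M|\sqcup|N|)\setminus(F\sqcup G)$, as required.

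Although Remark~\ref{rem:constructlimit} already packages the universal property, I would also verify it directly for transparency: the injections are strong since $\kappa_M^{-1}(F\sqcup G)=F\in\F_M$, and given strong maps $f\colon M\to P$ and $g\colon N\to P$ the copairing $h\colon|M|\sqcup|N|\to|P|$ is strong because $h^{-1}(K)=f^{-1}(K)\sqcup g^{-1}(K)\in\F_{M\oplus N}$ for every $K\in\F_P$, with uniqueness inherited from $\cat{FinSet}$. To descend to the subcategories I would note that rank is additive, $\rank(F\sqcup G)=\rank(F)+\rank(G)$, so the bottom flat of $M\oplus N$ is the disjoint union of the bottom flats of $M$ and $N$, and each rank-$1$ flat is a rank-$1$ flat of one summand joined with the bottom flat of the other. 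Hence $M\oplus N$ is loopless when $M,N$ are (empty bottom flat) and simple when $M,N$ are (singleton rank-$1$ flats); since $\SMatr$ and $\LMatr$ are full subcategories that then contain $M\oplus N$, the same object and mediating maps witness the coproduct there. The only genuine obstacle is thus the partitioning computation; the rest is routine manipulation of disjoint unions.
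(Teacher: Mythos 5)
Your proposal is correct and uses exactly the paper's construction: the direct sum with ground set $|M|\sqcup|N|$ and flats $\{F\cup G \mid F\in\mathcal{F}_M,\ G\in\mathcal{F}_N\}$, with the same coprojections and cotupling. The only difference is that you verify the flat axioms (in particular partitioning, via the correct observation that a minimal flat over $F\sqcup G$ enlarges exactly one component) and the preservation of simplicity and looplessness explicitly, where the paper delegates these checks to Brylawski and Crapo.
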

\begin{proof}
  See~\cite{brylawski1971combinatorial} and also~\cite[Proposition~4]{crapo1971constructions}.
  The coproduct $M+N$ has ground set $|M+N|=|M| \sqcup |N|$ and flats $\{F \cup G \mid F \in \mathcal{F}_M, G \in \mathcal{F}_N \}$. 
  It is easy to see that if $M$ and $N$ are 
  simple or loopless, then so is $M+N$.
  The coprojections are the inclusions $M \to M+N$ and $N \to M+N$, and it is clear that there is a unique strong cotuple $M+N \to P$ of strong maps $M \to P$ and $N \to P$.
  \qed
\end{proof}

\begin{proposition}\label{prop:equalizers}
  The categories $\SMatr$, $\LMatr$, and $\Matr$ have equalizers.
\end{proposition}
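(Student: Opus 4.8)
The plan is to apply Remark~\ref{rem:constructlimit}. For parallel strong maps $f,g\colon M\to N$, the equalizer of $|f|$ and $|g|$ in $\cat{FinSet}$ is the subset $E=\{x\in|M|\mid f(x)=g(x)\}$ with inclusion $e\colon E\to|M|$. I expect the correct matroid structure on $E$ to be the \emph{restriction} $M|_E$, the matroid analogue of the subspace topology, whose closure is $\clos_{M|_E}(X)=\clos(X)\cap E$ for $X\subseteq E$ and whose flats are therefore exactly $\{F\cap E\mid F\in\mathcal{F}_M\}$.

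First I would observe that $\{F\cap E\mid F\in\mathcal{F}_M\}$ really is the flat lattice of a matroid---this is the standard restriction, i.e.\ deletion of $|M|\setminus E$---and that $\rank_{M|_E}(X)=\rank(X)$ for all $X\subseteq E$. Since $e^{-1}(F)=F\cap E$, the flats forced by Remark~\ref{rem:constructlimit} are precisely $\{e^{-1}(F)\mid F\in\mathcal{F}_M\}$; because $f$ is already strong, the composite cone leg $f\circ e$ automatically has flat preimages once $e$ does, so no further flats are forced. The restriction structure contains exactly these forced flats and nothing more, so it is the coarsest matroid structure making the cone strong, and by Remark~\ref{rem:constructlimit} the equalizer exists with underlying set $E$.

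It then remains to confirm the universal property. Given a matroid $P$ and a strong map $h\colon P\to M$ with $f\circ h=g\circ h$, the image of $h$ lies in $E$, so $h$ factors uniquely through $E$ as a set map $\hat h\colon|P|\to E$ with $e\circ\hat h=h$; set-theoretic uniqueness is immediate. The only thing to check is that $\hat h$ is strong. For a flat $F\cap E$ of $M|_E$ I would compute $\hat h^{-1}(F\cap E)=h^{-1}(F)$, using that $\hat h$ takes values in $E$ together with $h=e\circ\hat h$; since $h$ is strong this is a flat of $P$. This preimage identity is the one genuinely load-bearing step, and I expect it to be the main (if mild) obstacle, as it is precisely where the choice of the restriction structure---rather than some finer one---pays off.

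Finally, for the subcategories $\SMatr$ and $\LMatr$, I would note that restriction preserves looplessness and simplicity: because $\rank_{M|_E}$ agrees with $\rank$ on subsets of $E$, an element of $E$ is a loop (resp.\ two elements of $E$ are parallel) in $M|_E$ exactly when it is (resp.\ they are) so in $M$, so $M|_E$ inherits these properties whenever $M$ has them. As $\SMatr$ and $\LMatr$ are full subcategories and the universal property above is verified against \emph{all} matroids $P$, the same object $M|_E$ serves as the equalizer within each subcategory.
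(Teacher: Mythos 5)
Your construction is exactly the paper's: the equalizer is the restriction of $M$ to $E=\{x\in|M|\mid f(x)=g(x)\}$ with flats $\{F\cap E\mid F\in\mathcal{F}_M\}$, and your verification of well-definedness, the universal property, and preservation of simplicity and looplessness fills in precisely the details the paper declares to be clear. The proposal is correct and takes essentially the same approach.
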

\begin{proof}
  See also~\cite[Theorem~53]{alhawary:thesis}. 
  The equalizer of $f,g \colon M \to N$ is the inclusion of the matroid $E$ with $|E|=\{x \in |M| \mid f(x)=g(x)\}$ and $\mathcal{F}_E = \{ F \cap |E| \mid F \in \mathcal{F}_M\}$.
  This is a well-defined matroid, that clearly satisfies the universal property.
  Finally, if $M$ is simple or loopless, then so is $E$.
  \qed
\end{proof}

\begin{remark}
  In contrast to topological spaces, there is an obstacle to the existence of all finite limits and colimits of matroids: matroid flat structures on a set $X$ are not closed under finite intersections in $2^{2^X}$ because of the partition property. 
  Thus the category of ``generalized matroids'', with objects defined via closed subsets by removing the partition axiom and strong maps as morphisms, gives a finitely complete and cocomplete category containing $\Matr$. The inclusion preserves coproducts and equalizers. We will now see that products and coequalizers are not reflected.
\end{remark}

\begin{proposition}\label{prop:products}
  The categories $\SMatr$, $\LMatr$, and $\Matr$ do not have all products.
\end{proposition}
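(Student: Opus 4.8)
The plan is to invoke Remark~\ref{rem:constructlimit}: a product $M\times N$ exists in $\Matr$ precisely when there is a coarsest matroid structure on the set product $|M|\times|N|$ making both projections strong. Since $|{-}|$ has the right adjoint $C$ (Theorem~\ref{thm:finset}), it preserves products, so the ground set of any such product is forced to be $|M|\times|N|$ with the set-theoretic projections. I would therefore fix concrete factors and show that the poset of matroid structures on $|M|\times|N|$ making the projections strong has no least element. The natural choice is to take both $M$ and $N$ to be the rank-$2$ matroid on $\{0,1,2\}$ with flats $\emptyset,\{0\},\{1\},\{2\},\{0,1,2\}$ (three points in general position; it is simple). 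The ground set of the putative product is then the $3\times 3$ grid $\{(i,j)\}$, and strongness of the projections forces every row $\{i\}\times\{0,1,2\}$ and every column $\{0,1,2\}\times\{j\}$ to be a flat.

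Next I would pin down the minimal family of flats that strongness demands. Because flats are closed under intersection, every singleton $\{(i,j)\}=(\{i\}\times\{0,1,2\})\cap(\{0,1,2\}\times\{j\})$ must be a flat; conversely the cylinders and their intersections give exactly the family $\mathcal A=\{\emptyset\}\cup\{\text{singletons}\}\cup\{\text{rows}\}\cup\{\text{columns}\}\cup\{\text{whole grid}\}$. I would then observe that $\mathcal A$ is not itself a matroid: the partition axiom fails at $\{(0,0)\}$, whose only minimal flats in $\mathcal A$ properly containing it are its row and its column, so that $\mathcal A$ leaves the four off points $(1,1),(1,2),(2,1),(2,2)$ uncovered. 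Hence any matroid structure making the projections strong must add further flats, and the whole question is whether there is a coarsest way to do so.

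The heart of the argument, which I expect to be the main obstacle, is to exhibit two genuinely different completions. For $P_1$ I would take the affine plane $AG(2,3)$ over $\mathbb F_3$ realised on the grid: its flats are $\emptyset$, the nine points, the twelve three-point lines (three rows, three columns, and the two diagonal parallel classes), and the whole grid. For $P_2$ I would take the rank-$3$ simple matroid, \ie{} linear space, in which the rows and columns are the only three-point lines and every other pair of points forms a two-point line; this is a valid linear space since each pair then lies on a unique line. Both contain all rows and columns as flats, so both make the projections strong, and both are simple. The crux is the computation $\mathcal F_{P_1}\cap\mathcal F_{P_2}=\mathcal A$: $P_1$ has no two-point flats (any two points of $AG(2,3)$ lie on a three-point line), while the only three-point flats common to the two are the rows and columns, the diagonals of $P_1$ failing to be flats of $P_2$. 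Were the product to exist, its flats would be contained in $\mathcal F_{P_1}\cap\mathcal F_{P_2}=\mathcal A$ yet also contain $\mathcal A$, forcing the product structure to be exactly $\mathcal A$; this is impossible, as $\mathcal A$ is not a matroid. Hence $\Matr$ has no product of these two matroids.

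Finally, for $\SMatr$ and $\LMatr$ I would note that free matroids are both simple and loopless, and that $F\dashv|{-}|$ makes them corepresent the underlying set; probing a hypothetical product in either subcategory with free matroids therefore again forces its ground set to be the $3\times3$ grid with the set projections. Since the witnesses $P_1$ and $P_2$ are simple, and hence loopless, the same intersection computation shows that the coarsest simple (respectively loopless) structure making the projections strong would again have to be $\mathcal A$, yielding the identical contradiction.
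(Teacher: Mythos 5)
Your proof is correct, but it takes a genuinely different route from the paper's. The paper (following Crapo) takes $M=N=U_{2,4}$, uses tupling of points to force the would-be product onto the $16$-element grid, and then derives a contradiction from the fact that the graphs of permutations paired with the identity would all have to be flats, which is incompatible for two specific permutations. You instead take $M=N=U_{2,3}$ and make explicit the mechanism that the paper only gestures at in the remark preceding this proposition: the family $\mathcal A$ of flats forced by strongness of the projections (rows, columns, and their intersections) is not itself a matroid, and you exhibit two simple rank-$3$ completions --- $AG(2,3)$ and the linear space whose only long lines are the rows and columns --- whose flat families intersect exactly in $\mathcal A$, so no coarsest admissible structure exists. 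Your version is more self-contained and fully verifiable (the paper's step ``whose image must be a flat'' is left unjustified and the details are outsourced to the citation), and your witnesses being simple lets the same computation dispose of $\SMatr$ and $\LMatr$ with no extra work, whereas the tupling argument you use there to pin down the ground set is essentially the paper's own first step. One small correction: you justify preservation of products by $|-|$ via its \emph{right} adjoint $C$, but having a right adjoint gives preservation of colimits; what you need is the \emph{left} adjoint $F\dashv|-|$ from Theorem~\ref{thm:finset}, which you do invoke correctly later for the subcategories, so the fix is only a matter of citing the other half of the same adjoint chain.
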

\begin{proof}
  See~\cite[Proposition~5]{crapo1971constructions}. 
  Consider the simple matroid $M$ of rank 2 with $|M|=\{a,b,c,d\}$ and suppose $M \times M$ existed in $\SMatr$.
  A pair $(x,y) \in |M| \times |M|$ induces two strong maps $x,y \colon 1 \to M$, and hence a unique tuple $(x,y) \colon 1 \to M \times M$ with $\pi_1(x,y)=x$ and $\pi_2(x,y)=y$. Hence $M \times M$ has at least 16 elements.
  The tuple of any permutation $\sigma \colon |M| \to |M|$ with the identity gives a strong map $M \to M \times M$, whose image $\{(a,\sigma(a)), (b,\sigma(b)), (c,\sigma(c)), (d,\sigma(d))\}$ must be a flat.
  But $\{(a,a),(b,b),(c,c),(d,d)\}$, $\{(a,a),(b,b),(c,d),(d,c)\}$ cannot both be flats.%
  \qed
\end{proof}

It follows that these categories do not have pullbacks or exponentials either.

\begin{proposition}\label{prop:pushouts}
  The categories $\SMatr$, $\LMatr$, and $\Matr$ do not have all pushouts, but pushouts under cofree matroids exist.
\end{proposition}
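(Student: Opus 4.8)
The plan is to handle both halves through the colimit recipe of Remark~\ref{rem:constructlimit}: the pushout of $M \xleftarrow{f} A \xrightarrow{g} N$ exists exactly when the candidate closure family $\mathcal{C} = \{S \subseteq K \mid \kappa_M^{-1}(S)\in\mathcal{F}_M,\ \kappa_N^{-1}(S)\in\mathcal{F}_N\}$ on the set-level pushout $K$ admits a finest matroid structure among those whose flats all lie in $\mathcal{C}$. Non-existence will follow by exhibiting a $\mathcal{C}$ carrying two matroid structures with no common refinement inside $\mathcal{C}$; existence under a cofree apex will follow because such an apex forces all of the gluing to take place among loops.

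For the negative half I would glue two copies of the simple rank-$3$ matroid $U_{3,4}$ (every triple a basis) along a three-element independent set. Concretely, take $M$ and $N$ equal to $U_{3,4}$ on $\{a,b,c,d\}$ and $\{a',b',c',d'\}$, let $A = F(3)$, and let $f,g$ send the three free generators to $a,b,c$ and to $a',b',c'$; both are strong since every subset of a free matroid is a flat. The set-pushout identifies $a\sim a'$, $b\sim b'$, $c\sim c'$, giving $K=\{p_1,p_2,p_3,d,d'\}$, and a short computation (using that the flats of $U_{3,4}$ are exactly the subsets of size $0,1,2$ and the whole set) shows that $\mathcal{C}$ contains every pair together with the three triples $\{p_i,d,d'\}$. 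The decisive point is then geometric: for each $i$ there is a rank-$3$ simple matroid $Q_i$ below $\mathcal{C}$ in which $\{p_i,d,d'\}$ is a line and all other lines have two points, yet $Q_1$ and $Q_2$ admit no common refinement, since a matroid cannot have two distinct lines $\{p_1,d,d'\}$ and $\{p_2,d,d'\}$ meeting in the two points $d,d'$. Hence no finest structure exists and the pushout fails. Because each $Q_i$ is simple and loopless and the entire span already lives in $\SMatr$, the one example defeats $\LMatr$ and $\Matr$ as well.

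For the positive half, recall from Theorem~\ref{thm:finset} that $C(X)$ has $X$ as its only flat, so all of its elements are loops; since strong maps send loops to loops, any span $M \xleftarrow{f} C(X) \xrightarrow{g} N$ merely pairs loops of $M$ with loops of $N$. I would construct the pushout as the coproduct $M+N$ (Proposition~\ref{prop:coproducts}) with the paired loops identified. Identifying two loops changes only the rank-$0$ flat and leaves the partition axiom untouched, so the result is again a matroid; and since any cocone $M\to Q\leftarrow N$ agreeing on $C(X)$ already equates the images of the paired loops in $Q$, the induced cotuple $M+N\to Q$ descends uniquely through the identification, yielding the universal property. In $\LMatr$ and $\SMatr$ the only cofree matroid is $C(\emptyset)$, the empty (initial) matroid, so there pushouts under cofree objects are just coproducts and exist by Proposition~\ref{prop:coproducts}.

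The step I expect to be the main obstacle is the negative half, precisely arranging the counterexample to survive passage to the loopless and simple subcategories: a carelessly chosen span (for instance merging two points of a single $U_{2,4}$) yields a $\mathcal{C}$ whose only coarse repair is a single parallel class or a uniform matroid, so the pushout silently reappears in $\LMatr$ or $\SMatr$. Forcing two genuinely incompatible rank-$3$ completions, and hence the two-lines-through-two-points obstruction above, is what makes the failure uniform across all three categories; verifying the candidate-family computation and the incomparability of the $Q_i$ is the one place where real work is required.
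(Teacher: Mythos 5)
Your proposal is correct, and both halves take a genuinely different route from the paper. For the negative half the paper uses Crapo's example: the rank-$3$ matroid $M$ on six elements in which every pair is a hyperplane, together with two single-element extensions $M_1$ and $M_2$ glued along $M$, followed by a chase through possible cocones to bound the rank of the image of $M$ in a putative pushout. You instead glue two copies of $U_{3,4}$ along a free three-element apex and invoke the ``finest structure'' criterion of Remark~\ref{rem:constructlimit}; your computation checks out (the candidate family consists exactly of the sets of size at most two, the three triples $\{p_i,d,d'\}$, and $K$), your $Q_1,Q_2$ do lie below it, and no matroid with flats inside the candidate family can contain both $\{p_1,d,d'\}$ and $\{p_2,d,d'\}$ --- the crispest way to finish is to note that such a matroid would have $\{p_1,d\}$ as a flat whose unique minimal proper superflat is $\{p_1,d,d'\}$, so the partition axiom fails there. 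The one point to tighten is the transfer to $\SMatr$ and $\LMatr$: the criterion of Remark~\ref{rem:constructlimit} rests on $|-|\colon\Matr\to\cat{FinSet}$ having a right adjoint, which is unavailable after restricting to these non-coreflective subcategories, so there you should argue directly from the two incompatible cocones $Q_1,Q_2$ rather than from the ``finest structure'' formulation (the paper's cocone-chasing phrasing is closer to what is needed, though it is equally terse on this point). For the positive half your construction --- the coproduct $M+N$ with the paired loops identified --- together with your verification of the partition axiom and of the universal property is sound, and it is the correct general description: the object $C(Y)$ named in the paper's proof agrees with yours only when $M$ and $N$ are themselves cofree (already for $X=\emptyset$ the pushout is the coproduct $M+N$, not the cofree matroid on $|M|\sqcup|N|$), so your more careful treatment of this half is a genuine improvement rather than a mere variant.
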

\begin{proof}
  See~\cite[Proposition~7]{crapo1971constructions}.
  Consider the simple matroid $M$ with six elements where every pair forms a hyperplane.
  Let $M_1$ be the extension of $M$ by one element so that two of the hyperplanes are triples, and let $M_2$ be the extension of $M$ by one element so that three of the hyperplanes are triples, two of which coincide with $M_1$.
  Suppose the pushout $P$ of the embeddings $f_1 \colon M_1 \to M$ and $f_2 \colon M_2 \to M$ existed.
  Then the image of $M$ colimit cone $P'$ cannot be of rank 3, because then $M_1$ and $M_2$ would be embedded in $P$ with overlap $M$.
  The flats $\{a,b\}$ and $\{d,e\}$ meet in a point of $P$ which is not in the flat $\clos\{e,f\}$ of $M_1$ but is in the flat $\clos\{e,f\}$ in $M_2$. Therefore the image of $M$ in $P$ has rank at most 1.
  But there exist different choices $P'$, such that no rank-1 flat has a strong map to both choices $P'$.

  Morphisms $f \colon C(X) \to M$ and $g \colon C(X) \to N$ in $\Matr$ correspond to $\hat{f} \colon X \to |M|$ and $\hat{g} \colon X \to |N|$ in $\cat{FinSet}$ by Theorem~\ref{thm:finset}. These have a pushout $Y$ in \cat{FinSet}, and it follows that $C(Y)$ is the pushout of $f$ and $g$ in $\Matr$.
  \qed  
\end{proof}

\begin{proposition}\label{prop:coequalizers}
  The categories $\SMatr$, $\LMatr$, and $\Matr$ do not have coequalizers.
\end{proposition}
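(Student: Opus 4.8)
My plan is to avoid constructing a fresh counterexample and instead derive the non-existence of coequalizers from the two colimit facts already in hand: these categories have binary coproducts (Proposition~\ref{prop:coproducts}) but lack some pushouts (Proposition~\ref{prop:pushouts}). The categorical ingredient is the standard construction of a pushout from a binary coproduct and a single coequalizer: given a span $M_1 \xleftarrow{u} M \xrightarrow{v} M_2$, form the coproduct $M_1 + M_2$ with coprojections $\iota_1,\iota_2$, and take the coequalizer of the parallel pair $\iota_1 u,\iota_2 v \colon M \rightrightarrows M_1 + M_2$; this coequalizer, if it exists, is exactly the pushout of $u$ and $v$. So the first step is to recall that each of $\SMatr$, $\LMatr$, and $\Matr$ has binary coproducts, and then argue by contradiction: if any one of them also had all coequalizers, the construction above would produce a pushout for every span, contradicting Proposition~\ref{prop:pushouts}. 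Concretely, the span $M \to M_1$, $M \to M_2$ from the proof of Proposition~\ref{prop:pushouts} gives the explicit witness $M \rightrightarrows M_1 + M_2$, a parallel pair whose coequalizer cannot exist, for otherwise it would furnish the missing pushout.

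Since Propositions~\ref{prop:coproducts} and~\ref{prop:pushouts} are each stated for all three categories, and the offending span consists of simple matroids, the same argument settles all three cases at once. The one point I would check carefully is that the pushout-from-coproduct-and-coequalizer construction genuinely takes place inside each subcategory rather than only in $\Matr$: one needs the coproduct used to be the coproduct computed in that subcategory and the span and its coequalizer to stay within it. This is immediate here, because Proposition~\ref{prop:coproducts} computes $M_1 + M_2$ uniformly (the inclusions $\SMatr,\LMatr \hookrightarrow \Matr$ preserve it) and the counterexample span of Proposition~\ref{prop:pushouts} is a span of simple matroids, hence a span in all three categories simultaneously.

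As an alternative, more hands-on route I would instead use Remark~\ref{rem:constructlimit}: for parallel strong maps $f,g$, compute the set-level coequalizer $q \colon |N| \to K$ and the collection of \emph{allowable} subsets $\{S \subseteq K \mid q^{-1}(S) \in \mathcal{F}_N\}$, then show there is no \emph{finest} matroid structure on $K$ whose flats are all allowable. The obstruction is the partition axiom: one arranges the identifications so that two incomparable maximal compatible structures arise (for example two distinct partitions of $K$ into parallel classes, or two distinct line configurations in the simple case), whose only common refinement would require non-allowable flats such as singletons or pairwise intersections; with no finest structure there is no coequalizer. The hard part on this route—and the reason I prefer the abstract argument—is ensuring that the two incomparable maximal structures are themselves simple (respectively loopless), so that the subcategory cannot secretly supply a finest object; arranging this simultaneously for all three categories is exactly where a direct counterexample becomes delicate, whereas the coproduct/pushout argument sidesteps it entirely.
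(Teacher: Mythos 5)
Your primary argument is correct and is a genuinely different route from the paper's. The paper proves the proposition by exhibiting an explicit parallel pair $f,g \colon M \to N$ on a five-element rank-3 matroid, showing that any coequalizer would have to live on the quotient set $\{\bullet,[12],[3],[4]\}$ with one of three candidate flat structures, and then defeating each candidate by a competing cocone. You instead invoke the standard fact that binary coproducts plus coequalizers yield all pushouts (the pushout of $M_1 \xleftarrow{u} M \xrightarrow{v} M_2$ is the coequalizer of $\iota_1 u, \iota_2 v \colon M \rightrightarrows M_1+M_2$), so Proposition~\ref{prop:coproducts} together with Proposition~\ref{prop:pushouts} forces the conclusion; and since Proposition~\ref{prop:pushouts} precedes this statement in the paper and its counterexample span consists of simple matroids whose coproduct is again simple and is preserved by the full inclusions $\SMatr, \LMatr \hookrightarrow \Matr$, there is no circularity and the argument covers all three categories at once. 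What your route buys is brevity and robustness against the delicate flat-structure casework; what it costs is that the result now inherits any weakness of the pushout proof (which in the paper is the sketchiest argument in the section) and that the witnessing parallel pair you obtain, $M \rightrightarrows M_1+M_2$ on a thirteen-element ground set, is much larger and less transparent than the paper's five-element example, which is reused verbatim for the pointed categories later on. Your sketched alternative via Remark~\ref{rem:constructlimit} is essentially the paper's actual strategy in disguise (the paper's three candidate flat families are exactly the maximal allowable structures, none of which is finest), and your worry about keeping the competing structures simple or loopless is legitimate but is resolved in the paper by choosing competing cocones rather than competing structures on the same quotient set.
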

\begin{proof}
  Consider the following objects and morphisms:
  \begin{align*}
    |M| & = |N| = \{\bullet,1,2,3,4\}, \\
    \mathcal{H}_M & = \mathcal{H}_N = \big\{\{\bullet,1,2\},\{\bullet,1,3\},\{\bullet,2,3\},\{\bullet,1,4\},\{\bullet,2,4\},\{\bullet,3,4\}\big\}, \\
    f & = \id,\\
    g & = \big( \bullet\mapsto\bullet,1\mapsto2,2\mapsto1,3\mapsto3,4\mapsto4 \big).
  \end{align*}
  If the coequalizer $c \colon N \to C$ of $f,g \colon M \to N$ existed,
  then~\cite[Theorem~54]{alhawary:thesis}\footnote{That proof is invalid, as the map $y \mapsto [y]$ need not be strong.
  However, it does hold for the choices in our proof below; that is, there are strong maps from $N$ to options~\eqref{eq:c1} and~\eqref{eq:c3}.
  }
   its ground set must be (in bijection with) the quotient $|N|/\mathop{\sim}$ by the equivalence relation generated by $f(x) \sim g(x)$ for all $x \in |N|$, and $c$ must map $x$ to its equivalence class $[x]$.
  Explicitly, $|C| = \{\bullet,[12],[3],[4]\}$.
  \[\xymatrix@C+2ex@R-2ex{
    M \ar@<1mm>^-{f}[r] \ar@<-1mm>_-{g}[r] & N \ar^-{c}[r] \ar_-{c'}[dr] & C \ar@{..>}[d]^-{k} \\
    && C'
  }\]
  Each flat $G$ of $C$ must be of the form $[F]=\{[x] \mid x \in F\}$ for some flat $F$ of $N$, because
  $G = c(c^{-1}(G)) = [c^{-1}(G)]$. In fact each flat $F$ of $C$ must satisfy $F=[F]$ in $N$.
  Thus the only flats which may be contained in $C$ are
  \[
    \{\bullet\}, \{\bullet,[3]\}, \{\bullet,[4]\}, \{\bullet,[12]\}, \{\bullet,[3],[4]\}, \{\bullet,[12],[3],[4]\}.
  \]
  That leaves only three possibilities for a well-defined geometric lattice $\mathcal{F}$:
  \begin{align}
    &\big\{\{\bullet\},\{\bullet,[12],[3],[4]\}\big\},\label{eq:c1}\\
    &\big\{\{\bullet\},\{\bullet,[12]\},\{\bullet,[3],[4]\},\{\bullet,[12],[3],[4]\}\big\},\text{ or}\label{eq:c2}\\
    &\big\{\{\bullet\},\{\bullet,[12]\},\{\bullet,[3]\},\{\bullet,[4]\},\{\bullet,[12],[3],[4]\}\big\}.\label{eq:c3}
  \end{align}
  Option~\eqref{eq:c1} fails when we set $C'$ to have the same ground set as $C$ with flats~\eqref{eq:c3}, because then the inverse image under $k$ of any rank-1 flat is not a flat.
  Option~\eqref{eq:c2} fails when we give $|C'|=\{[\bullet4],[12],[3]\}$ flats $\big\{\{[\bullet4]\},\{[\bullet4],[12],[3]\}\big\}$, because then the inverse image under $k$ of the rank-0 flat is not a flat.
  Option~\eqref{eq:c3} fails when we endow $|C'|=\{[\bullet34],[12]\}$ with flats $\big\{\{[\bullet34]\},\{[\bullet34],[12]\}\big\}$, because then the inverse image under $k$ of the rank-0 flat is not a flat.
  \qed
\end{proof}

\begin{corollary}\label{cor:monomorphisms}
  A morphism of $\Matr$ is monic if and only if it is injective, and epic if and only if it is surjective.
\end{corollary}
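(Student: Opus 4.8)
The plan is to deduce everything from the forgetful functor $|-| \colon \Matr \to \cat{FinSet}$ together with the chain of adjunctions $F\dashv|-|\dashv C$ supplied by Theorem~\ref{thm:finset}, using the elementary facts that in $\cat{FinSet}$ the monomorphisms are exactly the injections and the epimorphisms exactly the surjections. First I would record that $|-|$ is faithful: two strong maps with the same underlying function are literally the same map. A faithful functor reflects both monomorphisms and epimorphisms, so if $|f|$ is injective (monic in $\cat{FinSet}$) then $f$ is monic, and if $|f|$ is surjective (epic in $\cat{FinSet}$) then $f$ is epic. This settles the two ``if'' directions simultaneously.

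For the converses I would invoke the adjoints. Since $|-|$ is a right adjoint (to $F$), it preserves monomorphisms; hence $f$ monic in $\Matr$ forces $|f|$ monic in $\cat{FinSet}$, that is, injective. Dually, since $|-|$ is a left adjoint (to $C$), it preserves epimorphisms; hence $f$ epic in $\Matr$ forces $|f|$ epic, that is, surjective. Combining the two halves yields both biconditionals.

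One can also unwind these preservation arguments into explicit witnesses, which may be more transparent. If $f$ is not injective, say $f(x)=f(y)$ with $x\neq y$, then the two maps $F(1)\to M$ picking out $x$ and $y$ (corresponding under $F\dashv|-|$ to these two elements of $|M|$) are distinct yet equalized by $f$, contradicting monicity. If $f\colon M\to N$ is not surjective, pick $n_0\in|N|$ outside the image and take the two functions $|N|\to\{0,1\}$ that are, respectively, constantly $0$ and the indicator of $\{n_0\}$; under $|-|\dashv C$ these induce two distinct strong maps $N\to C(\{0,1\})$ which nevertheless agree after precomposition with $f$ (both then correspond to the constant function $0$, since $n_0$ is not hit), contradicting epicity. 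Here the only thing to note is that every function into a cofree matroid is strong, which is immediate from the description of $C$.

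There is no real obstacle: once Theorem~\ref{thm:finset} provides the adjunctions, the statement is a formal consequence. The only points requiring a modicum of care are the standard characterizations of monos and epis in $\cat{FinSet}$ and keeping track of the direction of each adjunction, so that $|-|$ is correctly seen as both a right adjoint (preserving monos) and a left adjoint (preserving epis).
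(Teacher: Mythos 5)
Your argument is correct, and it takes a slightly different route from the paper's. The paper's proof is a one-liner asserting that $|-|\colon\Matr\to\cat{FinSet}$ reflects kernel pairs and cokernel pairs and deducing that it preserves and reflects monomorphisms and epimorphisms; it leaves all the supporting detail implicit. You instead split the claim into two standard mechanisms: reflection of monos and epis follows from faithfulness of $|-|$, and preservation follows from the two adjunctions $F\dashv|-|\dashv C$ of Theorem~\ref{thm:finset} ($|-|$ preserves monos as a right adjoint and epis as a left adjoint), with the directions of the adjunctions tracked correctly. Your explicit witnesses are also sound: maps out of $F(1)$ correspond exactly to elements of $|M|$, and every function into a cofree matroid $C(\{0,1\})$ is strong since the only flat is the whole set, so the indicator-function argument detects non-surjectivity. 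If anything, your version is more self-contained than the paper's, which silently relies on the existence and behaviour of (co)kernel pairs in $\Matr$; what the paper's phrasing buys is brevity and a single uniform statement covering both cases at once.
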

\begin{proof}
  The functor $|-| \colon \Matr \to \cat{FinSet}$ reflects kernel pairs and cokernel pairs. It follows that $|-|$ preserves and reflects monomorphisms and epimorphisms.
  \qed
\end{proof}

\begin{lemma}\label{lem:isomorphisms}
  A morphism of $\Matr$ is an isomorphism if and only if it is bijective and the direct image of any flat is a flat.
\end{lemma}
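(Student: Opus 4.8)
The plan is to exploit the elementary fact that for a bijection $f \colon |M| \to |N|$ with set-theoretic inverse $g = f^{-1} \colon |N| \to |M|$, taking the direct image along $f$ coincides with taking the inverse image along $g$: for every $S \subseteq |M|$ we have $f(S) = g^{-1}(S)$. This identity is exactly what converts the ``direct image of a flat is a flat'' condition appearing in the statement into the ``inverse image of a flat is a flat'' condition built into the definition of a strong map.

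First I would unwind what an isomorphism in $\Matr$ means. Since morphisms are functions and composition is ordinary composition of functions, an isomorphism $f \colon M \to N$ is precisely a strong map whose underlying function is a bijection and whose set-theoretic inverse $g$ is again a strong map. Indeed, the categorical inverse is a two-sided inverse function, so it must coincide with $f^{-1}$ on the level of $\cat{FinSet}$.

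For the forward direction, assume $f$ is an isomorphism. Then its underlying function is bijective, and its inverse $g$ is a strong map $N \to M$, so by definition $g^{-1}(F)$ is a flat of $N$ for every flat $F$ of $M$. By the image identity above, $g^{-1}(F) = f(F)$, whence the direct image of every flat of $M$ is a flat of $N$, as required. For the converse, assume $f$ is bijective and sends flats to flats under direct image. Its set-theoretic inverse $g = f^{-1}$ is then a well-defined bijection $|N| \to |M|$, and it remains only to check that $g$ is strong, i.e.\ that $g^{-1}(F)$ is a flat of $N$ for each flat $F$ of $M$. Once more $g^{-1}(F) = f(F)$, a flat by hypothesis; so $g$ is a strong map serving as a two-sided inverse to $f$ in $\Matr$, making $f$ an isomorphism.

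There is no genuine obstacle here beyond bookkeeping with direct and inverse images. The only points worth stating with care are the image identity $f(S) = g^{-1}(S)$ for a bijection, and the observation that the categorical inverse of a bijective morphism must be its set-theoretic inverse; together these reduce the verification that ``the inverse is strong'' to exactly the stated direct-image condition.
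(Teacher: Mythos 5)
Your proof is correct and follows essentially the same route as the paper's: both rest on the identity that the direct image under a bijection $f$ equals the inverse image under $f^{-1}$, which converts the direct-image condition into the statement that $f^{-1}$ is strong. The only cosmetic difference is that the paper cites Corollary~\ref{cor:monomorphisms} for bijectivity of an isomorphism, whereas you derive it directly from faithfulness of the underlying-set functor; both are fine.
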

\begin{proof}
  If $f \colon M \to N$ is an isomorphism, it is bijective by Corollary~\ref{cor:monomorphisms}. 
  The direct image under $f$ of a flat $F$ equals the inverse image under $f^{-1}$ of $F$, and therefore is a flat in $N$ because $f^{-1}$ is strong. The latter argument also establishes the converse.
  \qed
\end{proof}

\section{Adjunctions between subcategories of matroids}\label{sec:adjunctions}

We have seen various classes of matroids: all matroids, simple matroids, free matroids, and loopless matroids. We now study free and cofree constructions translating between these classes. Theorem~\ref{thm:finset} already showed that free matroids over sets exist, and are precisely what we have been calling free matroids. We go on to consider whether the inclusions 
\[
  \FMatr \hookrightarrow \SMatr \hookrightarrow \LMatr \hookrightarrow \Matr
\]
have adjoints. 

\subsection{Pointed matroids}

We first focus on pointed matroids, which play an important role in matroid theory.

\begin{definition}
  Write $\Matrp$ for the category of pointed matroids and strong maps that preserve the distinguished point, and $\FMatrp$, $\SMatrp$, $\LMatrp$ for its full subcategories of pointed free, pointed simple, and pointed loopless matroids.
\end{definition}

\begin{remark}\label{rem:reviewerscomment}
  A loop in a matroid $M$ is a strong morphism $C(1)\to M$ (where 1 denotes the singleton set).
  Hence $\Matrp$ is (isomorphic to) the coslice category $C(1) \slash \Matr$. 
  Since $\Matr$ has coproducts by Proposition~\ref{prop:coproducts}, the forgetful functor $\Matrp\to\Matr$ is monadic.
  Therefore its left adjoint $(-)_{\bullet}$ sends each object $M\in\Matr$ to the coproduct inclusion $C(1)\to M+C(1)$. 
  Moreover, this monad preserves connected colimits. Thus the forgetful functor $\Matrp \to \Matr$ creates limits and connected colimits. In particular it preserves and reflects monomorphisms and epimorphisms. 
  It follows that $\Matrp$ has no products, exponentials or pullbacks; in view of Proposition~\ref{prop:pushouts}, it also follows that $\Matrp$ has all coproducts.
\end{remark}

The proof of Proposition~\ref{prop:equalizers} applies unchanged to the pointed categories to show that equalizers exist. 
As in Remark~\ref{rem:constructlimit}, the category $\FMatrp$ is isomorphic to the category $\cat{FinSet}_\bullet$ of pointed finite sets and pointed functions, and so has all finite (co)limits~\cite[28.9.5]{adamekherrlichstrecker:joyofcats}.

Apart from coproducts, $\Matrp$ does not have many colimits: the proofs of Proposition~\ref{prop:pushouts} and Proposition~\ref{prop:coequalizers} apply unchanged to the corresponding pointed categories to show that there are no coequalizers or pushouts. Because of this, we cannot invoke the adjoint functor theorem. 
We will reason concretely below, to avoid and repair mistakes in the literature.

\begin{lemma}\label{lem:pointed}
  The forgetful functor $\Matrp \to \Matr$ has no right adjoint.
\end{lemma}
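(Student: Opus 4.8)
The plan is to exploit the fact that a right adjoint would force the forgetful functor $U\colon\Matrp\to\Matr$ to preserve all colimits, and then to exhibit a coproduct that $U$ does not preserve. This is consistent with Remark~\ref{rem:reviewerscomment}, which shows $U$ creates limits and \emph{connected} colimits: coproducts are not connected, so they are precisely where room for failure remains. Since $\Matrp$ has all coproducts (Remark~\ref{rem:reviewerscomment}) and $\Matr$ has coproducts (Proposition~\ref{prop:coproducts}), the canonical comparison map $U(M_\bullet)+U(N_\bullet)\to U(M_\bullet\sqcup N_\bullet)$ is defined for any pointed matroids $M_\bullet,N_\bullet$, and if $U$ had a right adjoint this map would be an isomorphism. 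It therefore suffices to produce $M_\bullet,N_\bullet$ for which it is not.

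The structural heart of the argument is the coslice description $\Matrp\cong C(1)/\Matr$ from Remark~\ref{rem:reviewerscomment}. In a coslice category, the coproduct of two objects is the pushout of their structure maps under $C(1)$, and such a pushout exists here because $C(1)$ is cofree (Proposition~\ref{prop:pushouts}). Crucially, this pushout \emph{identifies} the two distinguished loops into a single point of $M_\bullet\sqcup N_\bullet$, and both coprojections send their respective points to this common element. By contrast, the coproduct in $\Matr$ (Proposition~\ref{prop:coproducts}) is the disjoint direct sum, in which the two point-loops remain distinct elements of $|M|\sqcup|N|$. Consequently the comparison map identifies these two distinct elements, so it fails to be injective, and hence is not an isomorphism by Corollary~\ref{cor:monomorphisms}.

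The cleanest concrete instance is $M_\bullet=N_\bullet=C(1)$, the single-loop matroid pointed at its unique element, which is the initial object of $\Matrp\cong C(1)/\Matr$. Then $C(1)\sqcup C(1)\cong C(1)$, so $U(C(1)\sqcup C(1))$ has one element, whereas $U(C(1))+U(C(1))=C(1)+C(1)$ has two; since isomorphisms in $\Matr$ are bijective (Lemma~\ref{lem:isomorphisms}), these cannot be isomorphic, contradicting preservation of coproducts. The one step requiring care — the main obstacle — is justifying precisely that the coproduct in $\Matrp$ agrees with the coslice pushout and that its coprojections collapse both points onto a single loop; this is purely formal once the coslice presentation and the existence of pushouts under $C(1)$ are in hand, but it is the hinge on which the cardinality contradiction turns.
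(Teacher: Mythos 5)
Your proof is correct, but it takes a genuinely different route from the paper's. The paper argues directly with the adjunction hom-set bijection, using the empty matroid $O$ as a test object: if $R$ were a right adjoint then $\Matrp(N,R(O))\cong\Matr(|N|,O)$, but the right-hand side is empty for any pointed $N$ (no function from a nonempty set to $\emptyset$) while the left-hand side always contains the constant map at $\bullet$ (which is strong since $\bullet$ lies in every flat of $R(O)$). You instead invoke the general principle that a functor with a right adjoint preserves all existing colimits, and exhibit a binary coproduct that the forgetful functor fails to preserve: $C(1)\sqcup C(1)\cong C(1)$ in $\Matrp$ (coproduct of the initial object with itself), whereas $C(1)+C(1)$ in $\Matr$ has two elements, so the comparison map cannot be a bijection. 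Both arguments are sound; the paper's is a two-line hom-set computation requiring no machinery, while yours leans on the coslice presentation and the existence of coproducts in $\Matrp$ from Remark~\ref{rem:reviewerscomment}. It is worth noting that the paper's argument is essentially the nullary case of yours: the forgetful functor already fails to preserve the initial object (the empty coproduct), since the initial object of $\Matrp$ is $C(1)$ while that of $\Matr$ is $\emptyset$ --- so you could have obtained an even shorter contradiction without constructing any binary coproduct at all.
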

\begin{proof}
  Suppose the functor $R$ were a right adjoint. Consider the empty matroid $O$ with $|O|=\emptyset$. For any pointed matroid $N$, there are no functions $N \to O$. But this contradicts the fact that there does exist a pointed strong map $N \to R(O)$, namely the map that sends every element of $N$ to $\bullet$.
  \qed
\end{proof}

The categories $\Matrp$ and $\Matr$ are not equivalent. In particular, $\Matrp$ has a zero object: the one-element matroid is both initial and terminal. This matroid is a terminal object in $\Matr$, but not initial.

\subsection{Free pointed matroids}

We start with right adjoints of functors out of the category of free pointed matroids.

\begin{theorem}\label{thm:rightadjointFMatrLMatr} 
 The category $\FMatrp$ is a coreflective subcategory of $\LMatrp$: the inclusion $\FMatrp \hookrightarrow \LMatrp$ has a right adjoint $F$ defined by
  \[
    |F(M)| = |M|,
    \qquad
    \mathcal{F}_{F(M)} = 2^{|M|} \setminus 2^{|M|\setminus \{\bullet\}},
    \qquad
    F(f) = f.
  \]
\end{theorem}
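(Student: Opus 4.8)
The plan is to establish the adjunction $i \dashv F$, where $i \colon \FMatrp \hookrightarrow \LMatrp$ is the inclusion, by checking that $F$ is a well-defined functor and then producing a natural bijection $\LMatrp(i(A), M) \cong \FMatrp(A, F(M))$ which is the identity on underlying functions. Everything rests on one observation: since the point $\bullet$ is a loop it lies in every flat, so the flats of $F(M)$ are exactly the subsets of $|M|$ that contain $\bullet$, and likewise the flats of any free pointed matroid $A$ are exactly the subsets of $|A|$ containing its point. I would begin by recording the resulting key lemma: \emph{every pointed function out of a free pointed matroid is automatically strong}. Indeed, for free $A$ and pointed $g \colon |A| \to |N|$, a preimage $g^{-1}(G)$ of a flat $G$ of $N$ is a flat of $A$ as soon as it contains the point of $A$, and it does, because $g$ sends that point to the loop $\bullet$ of $N$, which lies in $G$. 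Note that this uses only that the target is pointed.

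Next I would verify that $F$ is a functor $\LMatrp \to \FMatrp$. That $F(M)$ is a matroid follows directly from the flat axioms: the subsets containing $\bullet$ include $|M|$, are closed under intersection, and satisfy the partition property because the minimal flats properly above a flat $F$ are the sets $F \cup \{x\}$ for $x \notin F$. Its only loop is $\bullet$, so $F(M)$ is loopless and free, hence an object of $\FMatrp$; and $F(f) = f$ is strong by the key lemma, its source $F(M)$ being free and $f$ pointed.

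Finally I would prove the universal property by taking the counit $\epsilon_M \colon i(F(M)) \to M$ to be the identity on $|M|$, which is strong because every flat of $M$ contains the loop $\bullet$ and so is a flat of $F(M)$. Given $A \in \FMatrp$ and a strong map $g \colon i(A) \to M$, the only function factoring $g$ through $\epsilon_M$ is $g$ itself, now viewed as a map $A \to F(M)$; this is strong by the key lemma, giving existence, and its uniqueness is immediate. Equivalently, both $\LMatrp(i(A), M)$ and $\FMatrp(A, F(M))$ coincide with the set of pointed functions $|A| \to |M|$, and the comparison is the identity, which is manifestly natural. I expect no serious obstacle: the only points needing care are the correct description of the flats of a free pointed matroid and the observation that the key lemma relies on the target being pointed, so that the image of the source's point is guaranteed to be a loop.
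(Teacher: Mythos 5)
Your proposal is correct. Note, however, that the paper does not actually prove this theorem itself: its ``proof'' is only a citation to Al-Hawary's thesis and related papers, so there is no in-paper argument to compare against. Your direct argument is the natural one and is sound: the identification of the flats of a free pointed matroid as exactly the subsets containing the point, the key lemma that every pointed function out of a free pointed matroid into a pointed matroid is automatically strong, and the identity counit together give the adjunction with both hom-sets equal to the set of pointed functions. One small bonus worth making explicit: since your key lemma and the strongness of the counit use only that the target is pointed (so that $\bullet$ lies in every flat of $M$), and never that $M$ is loopless, your proof establishes verbatim the stronger claim stated immediately after the theorem in the paper, namely that $F$ also provides a right adjoint to the inclusions $\FMatrp \hookrightarrow \Matrp$ and $\FMatrp \hookrightarrow \SMatrp$.
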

\begin{proof}
  See~\cite[Theorem~86]{alhawary:thesis} and \cite{alhawarymcrae:axiomatic,alhawary:free}.
  \qed
\end{proof}

The above functor $F$ extends to right adjoints of the inclusions $\FMatrp \hookrightarrow \Matrp$ and $\FMatrp \hookrightarrow \SMatrp$. We now examine whether the functor $F$ itself has a right adjoint for each of those three cases.

\begin{proposition}
  The functors $F\colon \SMatrp\to\FMatrp$ and $F \colon \LMatrp\to\FMatrp$ have no right adjoints.\footnote{The purported right adjoint in~\cite[Theorem~126]{alhawary:thesis} fails for every function mapping $1<n<\card M$ elements to $\bullet$.}
\end{proposition}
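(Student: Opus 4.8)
The plan is to exploit the fact that $F$ is essentially an underlying-set functor. Since a free pointed matroid is determined by its pointed ground set and, as the excerpt records, $\FMatrp\cong\cat{FinSet}_\bullet$, the functor $F$ (which keeps the ground set and freely closes it) becomes, under this isomorphism, the forgetful functor sending a matroid to its pointed ground set. A right adjoint to $F$ would thus be a ``cofree simple (resp.\ loopless) pointed matroid'' functor $G$, and I would derive a contradiction from the adjunction bijection $\mathbf{C}(M,G(A))\cong\cat{FinSet}_\bullet(|M|,|A|)$, where $\mathbf{C}$ denotes $\SMatrp$ or $\LMatrp$. This bijection holds because $F(M)$ and $A$ are both free, so that $\FMatrp(F(M),A)$ is exactly the set of pointed functions $|M|\to|A|$; here I would use that any pointed function out of a free matroid into a loopless pointed matroid is automatically strong, since every flat of a loopless matroid contains $\bullet$ and hence pulls back to a set containing $\bullet$, i.e.\ a flat of the free domain.

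First I would extract structural information about $G(A)$ by counting. Evaluating the bijection at free matroids $M=F(X)$ gives $(\card G(A))^{\card X-1}=(\card A)^{\card X-1}$, whence $\card G(A)=\card A$. Then, for an arbitrary $M\in\mathbf{C}$, the strong maps $M\to G(A)$ form a subset of the pointed functions $|M|\to|G(A)|$, and these two finite sets have the same cardinality $(\card A)^{\card M-1}$; therefore they coincide. In other words, a right adjoint would force $G(A)$ to be a loopless (resp.\ simple) pointed matroid on $\card A$ elements into which \emph{every} pointed function from \emph{every} object of $\mathbf{C}$ is strong.

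Finally I would contradict this rigidity already at $\card A=3$, so that $|G(A)|=\{\bullet,u,v\}$. In the simple case the only simple pointed structure on three elements is the free one (rank $2$), in which $\{\bullet,u\}$ is a flat; taking $M$ to be the pointed uniform matroid $U_{2,3}$ on $\{\bullet,a,b,c\}$ and $\phi$ the pointed function sending $a,b\mapsto u$, $c\mapsto v$ makes $\phi^{-1}\{\bullet,u\}=\{\bullet,a,b\}$ fail to be closed. In the loopless case there are two candidate structures for $G(A)$ — the free one and the rank-$1$ matroid making $u,v$ parallel — and I would rule out both by using a pointed function out of the rank-$1$ parallel matroid on $\{\bullet,a,b\}$: the preimage of $\{\bullet,u\}$ in the free case, and the preimage of the bottom flat $\{\bullet\}$ in the parallel case, equals $\{\bullet,a\}$, which is not closed. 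The main obstacle is the middle step: converting the mere existence of a right adjoint into the concrete, checkable assertion that every pointed function into $G(A)$ is strong. Once that rigidity is established the counterexamples are immediate, and the only remaining care is to keep every test matroid inside the relevant subcategory and, in the loopless case, to eliminate both possible structures of $G(A)$.
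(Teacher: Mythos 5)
Your proof is correct, but it is organized quite differently from the paper's. The paper fixes one specific pair of matroids ($M$ the pointed three-point line with flats $\{\bullet\},\{\bullet,a\},\{\bullet,b\},\{\bullet,c\},\{\bullet,a,b,c\}$ and $N$ free on $\{\bullet,e\}$), counts the $8$ maps $F(M)\to N$, and then exhaustively counts $M\to V(N)$ over the possible cardinalities and flat structures of $V(N)$ ($5$, $11$, ``at least $11$'', and $21$ in the extra loopless case), showing none equals $8$. You instead extract two structural consequences of the adjunction before touching any specific matroid: first that $\card G(A)=\card A$ (by testing against free objects, using $\FMatrp\cong\cat{FinSet}_\bullet$), and second the rigidity statement that the inclusion $\mathbf{C}(M,G(A))\subseteq\cat{FinSet}_\bullet(|M|,|G(A)|)$ is an equality of finite sets of the same cardinality, so \emph{every} pointed function into $G(A)$ would have to be strong. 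This is a genuine improvement in economy: it eliminates the unbounded case $\card V(N)>3$ that the paper has to dispatch with an ``at least $11$'' estimate, and it reduces the final step to exhibiting a single non-strong pointed function for each of the (one simple, two loopless) possible structures on a three-element ground set, all of which you verify correctly. Both arguments are ultimately homset-counting arguments exploiting the same bijection, but yours converts the count into a reusable rigidity principle rather than a one-off numerical mismatch.
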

\begin{proof}
  Suppose $V$ were a right adjoint. 
  Let $N$ be the matroid with flats $\big\{\{\bullet\},\{\bullet,e\}\big\}$, and let $M$ be the matroid with flats $\big\{\{\bullet\},\{\bullet,a\},\{\bullet,b\},\{\bullet,c\},\{\bullet,a,b,c\}\big\}$.
  There are 8 morphisms $F(M)\to N$.
  Now count the morphisms $M \to V(N)$. For $\SMatrp$:
  \begin{itemize}
    \item If $\card V(N)=2$, it has flats $\big\{\{\bullet\},\{\bullet,e\}\big\}$: there are 5 morphisms;
        \item If $\card V(N)=3$, it has flats $\big\{\{\bullet\},\{\bullet,e_1\},\{\bullet,e_2\},\{\bullet,e_1,e_2\}\big\}$, and 11 morphisms;
        \item If $\card V(N)>3$, there are at least 11 morphisms.
  \end{itemize}
  For $\LMatrp$ we have to consider one extra case: $\F_{V(N)}=\big\{\{\bullet\},\{\bullet,e_1,e_2\}\big\}$, giving 21 morphisms $M \to V(N)$.
  \qed
\end{proof}

The above proof method, of counting morphisms between clever choices of matroids, will be used often and abbreviated in this section.

\begin{theorem}\label{thm:FVHadjunctions}
  The functor $F \colon \Matrp \to \FMatrp$ has adjoints $F \dashv V \dashv H$ given by
  \begin{align*}
    \F_{V(M)} & = \{|M|\}, & V(f)&=f, \\
    H(M) &= F(M'), & H(f)&=f|_{|M'|}.
  \end{align*}
  where $M'$ is got from $M$ by deleting all but loops.
  The functor $H$ has no right adjoint.
  The inclusions $\FMatrp\hookrightarrow\SMatrp$, $\FMatrp \hookrightarrow \LMatrp$, and $\FMatrp \hookrightarrow \Matrp$ have no left adjoint.
\end{theorem}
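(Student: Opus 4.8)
The plan is to establish the chain $F\dashv V\dashv H$ by exhibiting natural bijections of hom-sets, and to settle the two negative statements by the morphism-counting method used earlier in this section. Throughout I will lean on one elementary principle: \emph{morphisms out of a free pointed matroid are exactly pointed functions}. Indeed, if $A\in\FMatrp$ then its flats are precisely the subsets containing $\bullet$; for any $X\in\Matrp$ every flat of $X$ contains the loop $\bullet$, so the preimage of a flat under a pointed function $A\to X$ again contains $\bullet$ and is therefore a flat of $A$. Before the adjunctions I would check that $V$ and $H$ are well defined. For $V$, any pointed function into the single-flat matroid $C(|M|)$ is automatically strong, since the only flat pulls back to the whole ground set. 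For $H$, the key observation is that a strong map $f$ sends loops to loops: if $e$ is a loop of $M$ and $G$ a flat of $N$, then $f^{-1}(G)$ is a flat of $M$ and hence contains every loop, so $f(e)\in G$; letting $G$ range over all flats shows $f(e)$ is a loop. Thus $H(f)=f|_{|M'|}$ is well defined, and is strong because its codomain is free pointed; functoriality of both is immediate.

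Both adjunctions then reduce to this principle. For $F\dashv V$, the morphisms $F(A)\to B$ in $\FMatrp$ are the pointed functions $|A|\to|B|$, while the morphisms $A\to V(B)=C(|B|)$ in $\Matrp$ are likewise all pointed functions $|A|\to|B|$, since the unique flat of $C(|B|)$ always pulls back to $|A|$. For $V\dashv H$, a pointed function $C(|A|)\to B$ is strong iff every flat of $B$ pulls back to the only flat $|A|$ of $C(|A|)$; taking the smallest flat of $B$ shows this means exactly that the function lands in the loops of $B$, and such functions are precisely the pointed functions $A\to H(B)=F(B')$ into the free pointed matroid on the loops of $B$. Naturality in both variables is routine.

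For the claim that $H$ has no right adjoint, I would argue that $H$ over-collapses: it sends every loopless pointed matroid to the zero object $O$ (the one-point matroid), since then $M'=\{\bullet\}$ and $F(\{\bullet\})=O$. Suppose $R$ were right adjoint. Taking $A$ to be the free pointed matroid on a single nonloop, $H(A)=O$ is initial in $\FMatrp$, so $\mathrm{Hom}(H(A),B)$ is a singleton for every $B$; the adjunction forces $\mathrm{Hom}(A,R(B))$ to be a singleton, and since morphisms out of $A$ are all pointed functions into $R(B)$, this gives $\card|R(B)|=1$, so $R(B)=O$ for all $B$. But the constant functor at $O$ is not right adjoint to $H$: choosing $A$ with a second loop (so $H(A)$ is free pointed on one nonloop) and $B$ free pointed on one nonloop yields at least two morphisms $H(A)\to B$, whereas $\mathrm{Hom}(A,O)$ is a singleton.

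For the three inclusions a single counterexample suffices. Let $M$ be the pointed uniform matroid $U_{2,3}$, with ground set $\{\bullet,a,b,c\}$ and rank-$1$ flats $\{\bullet,a\},\{\bullet,b\},\{\bullet,c\}$; it is simple, hence lies in $\SMatrp\subseteq\LMatrp\subseteq\Matrp$, so the same $M$ treats all three cases at once. I would count the strong maps $M\to P$ into a free pointed matroid $P$ with $m$ nonloops using Lemma~\ref{lem:latticemaps}(c): $L(M)$ is the height-two lattice with three atoms, and a join-preserving map sending atoms to height at most $1$ must send the three atoms to values whose pairwise joins all coincide. The only surviving patterns send $a,b,c$ all to $\bullet$, or all to one common nonloop, or two to a common nonloop and the third to $\bullet$, giving $1+m+3m=4m+1$ strong maps. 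If a left adjoint $L$ existed, $L(M)$ would be free pointed on some $n$ nonloops, so the number of morphisms $L(M)\to P$ would be $(m+1)^n$, and the adjunction would demand $(m+1)^n=4m+1$ for all $m$, which already fails at $m=1$, where it reads $2^n=5$. The main obstacle is getting this count right, resisting the temptation to treat every pointed function $M\to P$ as strong: sending two points to a common nonloop and the third to $\bullet$ is strong, whereas sending a single point to a nonloop and the other two to $\bullet$ is \emph{not}, since the preimage of the rank-$0$ flat $\{\bullet\}$ is then the two-element nonflat $\{\bullet,b,c\}$. It is precisely the exclusion of this last pattern that makes the count $4m+1$ rather than a power $(m+1)^n$, and hence produces the decisive contradiction.
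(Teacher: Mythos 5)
Your proof is correct and follows the same template as the paper's (which simply defers to the argument of Theorem~\ref{thm:finset}): exhibit the hom-set bijections via the observation that maps out of free pointed matroids are exactly pointed functions, and refute the extra adjoints by counting morphisms against free matroids. Your count $\card\SMatrp(M,P)=4m+1$ versus $(m+1)^n$ for $M=U_{2,3}$ is a slightly cleaner witness than the paper's four-point rank-two matroid, but it is the same method, and your case analysis of which pointed functions out of $U_{2,3}$ are strong is accurate.
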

\begin{proof}
  Identical to that of Theorem~\ref{thm:finset}.
  \qed
\end{proof}

\subsection{Simple pointed matroids}

Next, we turn to the inclusion of simple matroids into larger categories.

\begin{proposition}\label{prop:SMleftadjoint}
  The category \SMatrp~is a reflective subcategory of \Matrp: the inclusion $\SMatrp \hookrightarrow \Matrp$ has a left adjoint $\sip$. The functor $\sip$ has no left adjoint.
\end{proposition}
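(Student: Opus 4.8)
The plan is to exhibit $\sip$ explicitly as the classical simplification of a matroid, verify its universal property directly, and then rule out a further left adjoint by the morphism-counting method introduced above.

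First I would define, for a pointed matroid $M$, the simplification $\sip(M)$ to have ground set $\{\bullet\}$ together with the rank-$1$ flats of $M$ (equivalently, the parallel classes of nonloops), and flats $\{\eta_M(F) \mid F \in \F_M\}$, where $\eta_M \colon M \to \sip(M)$ sends every loop to $\bullet$ and every nonloop to its parallel class. One checks that $\sip(M)$ is a well-defined simple pointed matroid and that $\eta_M$ is a pointed strong map: the preimage under $\eta_M$ of a flat of $\sip(M)$ is exactly the corresponding flat of $M$, since a parallel class is contained in a flat as soon as one of its members is, and all loops lie in every flat. On morphisms, $\sip(f)$ is the induced map on parallel classes.

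The crux is the universal property: given a simple pointed matroid $S$ and a pointed strong map $f \colon M \to S$, I must produce a unique pointed strong $\hat f \colon \sip(M) \to S$ with $f = \hat f \circ \eta_M$, which reduces to showing that $f$ is constant on the fibres of $\eta_M$. Two facts are needed. First, a strong map sends loops to loops, since the preimage of the rank-$0$ flat $\clos(\emptyset)$ of $S$ is a flat of $M$ and hence contains $\clos(\emptyset)$, that is, all loops; as $f$ is pointed and $S$ is simple, every loop of $M$ goes to $\bullet$. Second, $f$ identifies parallel elements: if $a,b$ are parallel then they lie in precisely the same flats of $M$, so for every flat $G \ni f(a)$ of $S$ the flat $f^{-1}(G)$ contains $a$ and hence $b$, giving $f(b) \in G$; thus $f(a)$ and $f(b)$ lie in the same flats of $S$ and are therefore parallel or equal, so equal by simplicity. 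Consequently $f$ factors through the surjection $\eta_M$; the resulting $\hat f$ is strong because $\hat f^{-1}(G) = \eta_M\big(f^{-1}(G)\big)$ is the image under $\eta_M$ of a flat of $M$, hence a flat of $\sip(M)$, and it is unique since $\eta_M$ is surjective. I expect verifying that $\hat f$ is strong, with its flat-image bookkeeping, to be the most delicate point.

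Finally, to show $\sip$ has no left adjoint, I would argue by counting morphisms. Suppose $L \dashv \sip$ and let $S = F(1)$ be the free pointed matroid on one generator, which is simple. Since maps out of a free matroid are determined by the image of the generator (as in Theorem~\ref{thm:finset}), $\card\SMatrp(S,\sip(M)) = \card\sip(M)$ for every pointed $M$, so the adjunction would force $\card\Matrp(L(S),M) = \card\sip(M)$ for all $M$. Testing this against the all-loops matroid $M_n$ on $n+1$ elements, for which $\sip(M_n) = \{\bullet\}$ while every point-preserving function $L(S) \to M_n$ is automatically strong, yields $(n+1)^{\,\card L(S)-1} = 1$ for all $n\ge 1$ and hence $\card L(S) = 1$; but then taking $M = S$ gives $\card\Matrp(L(S),S) = 1 \neq 2 = \card\sip(S)$, a contradiction. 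The only obstacle here is choosing test objects that separate the two counts, which the all-loops matroids do.
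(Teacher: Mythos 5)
Your proof is correct, but it takes a genuinely different route from the paper's on both halves. For the reflection, the paper does not argue directly: it derives the left adjoint $\sip$ from Theorem~\ref{thm:simplification}, which exhibits simplification as the idempotent monad $S\circ L$ arising from the reflection $L\dashv S$ between $\Matrp$ and $\GLat$, with $\SMatrp$ as its Eilenberg--Moore category. You instead construct $\sip(M)$ by hand as the matroid of parallel classes and verify the universal property elementarily; the two key observations you isolate --- that strong maps send loops to loops, and that a strong map into a simple matroid identifies parallel elements (since parallel elements lie in exactly the same flats) --- are exactly what makes $f$ constant on the fibres of $\eta_M$, and your bookkeeping $\hat f^{-1}(G)=\eta_M(f^{-1}(G))$ for strength of $\hat f$ is sound because $\eta_M$ is surjective and the flats of $\sip(M)$ are by definition the $\eta_M$-images of flats of $M$. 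Your version is self-contained and more elementary; the paper's buys the result for free from machinery it needs anyway (the lattice functor $L$ and its right adjoint $S$). For the nonexistence of a left adjoint of $\sip$, the paper takes $M$ with two parallel elements $a,b$ and shows the transpose of $e\mapsto F_1$ is not unique, since the automorphism swapping $a$ and $b$ induces the identity on $\sip(M)$; you instead count morphisms, using all-loops matroids to force $\card|K(F(1))|=1$ and then deriving a contradiction against $\card|\sip(F(1))|=2$. Both are valid; the paper's argument is shorter and pinpoints the obstruction (parallel elements are indistinguishable after simplification), while yours fits the morphism-counting template used throughout Section~4 and requires no discussion of units or transposes beyond cardinalities of hom-sets.
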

\begin{proof}
  The first statement follows directly from Theorem~\ref{thm:simplification}.
  For the second, suppose $K \dashv \sip$. 
  Take $M$ to be the pointed matroid with two parallel elements $a,b$ and the loop $\bullet$, and write $F_1$ for its rank-1 flat. Take $S$ to be the pointed simple matroid with elements $e$ and $\bullet$. Let $f$ map $e$ to $F_1$. Its transpose $\hat f$ must map some element $e'$ of $K(S)$ to either $a$ or $b$. But if $e'\mapsto a$ satisfies $f = \epsilon\circ\sip(\hat{f})$ then so does $e'\mapsto b$, and vice versa. Therefore, $\hat f$ cannot be unique.  
  \qed
\end{proof}

\begin{proposition}\label{prop:SMnorightadjoint}
  The inclusion $\SMatrp \hookrightarrow \Matrp$ has no right adjoint.
\end{proposition}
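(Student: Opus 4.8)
The plan is to prove that the inclusion $\SMatrp \hookrightarrow \Matrp$ has no right adjoint by the same morphism-counting technique that the paper has just announced will be used throughout this section. A right adjoint $R$ to the inclusion would have to satisfy, for every simple pointed matroid $S$ and every pointed matroid $M$, a natural bijection between pointed strong maps $S \to M$ (in $\Matrp$, viewing $S$ as an object of $\Matrp$ via the inclusion) and pointed strong maps $S \to R(M)$ (in $\SMatrp$). The strategy is to fix a single cleverly chosen target matroid $M$ and show that no simple pointed matroid $R(M)$ can reproduce the correct count of incoming morphisms from a suitable family of simple test matroids $S$.

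**First I would** choose $M$ to be a pointed matroid carrying enough parallel structure to create a mismatch — the natural candidate being a matroid with the point $\bullet$ together with several mutually parallel non-loops, say $|M| = \{\bullet, a, b\}$ with $a,b$ parallel, so that $\mathcal{F}_M = \big\{\{\bullet\},\{\bullet,a,b\}\big\}$. The key observation is that the counting behaves asymmetrically: a strong map into $M$ from a simple source can exploit the rank-1 flat $\{\bullet,a,b\}$, so there are strictly more maps $S \to M$ than the simple object $R(M)$ would predict. Concretely, I would take $S$ to range over a small family (for instance the two-element simple matroid with a single non-loop $e$, and perhaps one larger simple matroid) and tabulate $\#\Matrp(S,M)$ against $\#\SMatrp(S, R(M))$ for every possible candidate value of $R(M)$, which must itself be a simple pointed matroid. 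Since a simple matroid has no two parallel non-loops, no choice of $R(M)$ can simultaneously match the counts forced by the different test objects $S$.

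**The main obstacle** I expect is pinning down $R(M)$ tightly enough that the case analysis over its possible isomorphism types is finite and manageable: I would first use naturality together with the unit $\eta_S \colon S \to R(M)$ (or rather the counit, $R(M) \hookrightarrow M$) to bound the cardinality of $R(M)$, exactly as the preceding proofs bound $\card K(M)$ or $\card V(N)$ from the factorization requirement. The counit $\varepsilon_M \colon R(M) \to M$ is a strong map whose source is simple, and every map $S \to M$ must factor uniquely through it; this should force $\card R(M)$ into a short list (plausibly $\le 3$). Once the candidates are enumerated, the contradiction is a direct arithmetic comparison, and the parallel pair $a,b$ is precisely the feature that a simple $R(M)$ cannot accommodate, in close parallel to how the failure of uniqueness of $\hat f$ was driven by the parallel elements in the proof of Proposition~\ref{prop:SMleftadjoint}.

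**I would close** by remarking that this is the mirror image of the situation for $\sip$: where simplification collapses parallel classes and so admits a left adjoint making $\SMatrp$ reflective, there is no canonical way to \emph{expand} a matroid's parallel structure functorially on the right, which is the conceptual reason the right adjoint fails.
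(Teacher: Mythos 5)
Your strategy is sound and, with the counts filled in, it does close; but it reaches the contradiction by a somewhat different route than the paper. The paper first observes that a right adjoint $R$ would force a natural isomorphism $F \cong F \circ R$ (uniqueness of right adjoints to the composite inclusion $\FMatrp \hookrightarrow \SMatrp \hookrightarrow \Matrp$), from which it extracts abstractly that $R$ preserves cardinality and reflects surjectivity; it then takes a four-element $M$ with one parallel pair, is left with two candidate flat families for $R(M)$, and kills one by a count ($25$ versus $14$ maps from a rank-$2$ simple $S$ on three nonloops) and the other by surjectivity reflection. Your version takes the smaller $M$ with $\mathcal{F}_M=\{\{\bullet\},\{\bullet,a,b\}\}$; counting maps out of the two-element simple matroid $S_2=\{\bullet,e\}$ (every pointed function out of $S_2$ is strong, so $\card\Matrp(S_2,M)=\card M$ and $\card\SMatrp(S_2,R(M))=\card R(M)$) forces $\card R(M)=3$, and there is only one simple pointed matroid on three elements, so no case split is needed at all. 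Taking $S'$ to be the rank-$2$ simple matroid on three nonloops then gives $21$ strong maps $S'\to M$ but only $9$ strong maps $S'\to R(M)$, and you are done. Note that the free pointed matroid on two nonloops does \emph{not} separate the two sides (both counts are $9$), so the larger test object is genuinely needed.

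One justification in your plan should be repaired: you say the counit $\varepsilon_M\colon R(M)\to M$, being a strong map out of a simple matroid, ``should force $\card R(M)$ into a short list.'' It does not --- every simple pointed matroid admits a strong map onto $M$ (send all nonloops to $a$), so the mere existence of $\varepsilon_M$ bounds nothing. The cardinality bound comes instead from the hom-set bijection applied to the test object $S_2$, which is already part of your plan; just make sure that is the argument you actually invoke.
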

\begin{proof}
  If $R \colon \Matrp \to \SMatrp$ were a right adjoint, there would be a natural isomorphism $F \cong F \circ R \colon \Matrp \to \FMatrp$, whence $R$ must (1) preserve cardinality and (2) reflect surjectivity.
  Now write $S$ for the simple matroid with flats $\big\{\{\bullet\},\{\bullet,1\},\{\bullet,2\},\{\bullet,3\},\{\bullet,1,2,3\}\big\}$. By property (1), $R(S)$ has 4 elements, so its family of flats must be either that of $S$ or
  \[
    \big\{\{\bullet\},\{\bullet,1\},\{\bullet,2\},\{\bullet,3\},\{\bullet,1,2\},\{\bullet,2,3\},\{\bullet,1,3\},\{\bullet,1,2,3\}\big\}.
  \]
  Let $M$ be the matroid with flats $\big\{\{\bullet\},\{\bullet,1\},\{\bullet,2,3\},\{\bullet,1,2,3\}\big\}$. The family of flats of $R(M)$ must be either that of $S$ or
  \[
    \big\{\{\bullet\},\{\bullet,1\},\{\bullet,2\},\{\bullet,3\},\{\bullet,1,2\},\{\bullet,2,3\},\{\bullet,1,3\},\{\bullet,1,2,3\}\big\}.
  \]
  The latter case gives 25 strong maps $S\to M$ but only 14 strong maps $S\to R(M)$.
  In the former case the transpose $\hat{f}$ of any map $f$ must be surjective by property (2), but there are no surjective strong maps $S \to M$.
  \qed
\end{proof}

\subsection{Loopless pointed matroids}

Let us consider adjoints of the inclusion of loopless matroids into larger categories.

\begin{theorem}
  The category $\LMatrp$ is a reflective subcategory of $\Matrp$: the inclusion $\LMatrp \hookrightarrow \Matrp$ has a left adjoint $J$ that deletes every loop except $\bullet$ from objects and acts on morphisms $f \colon M \to N$ as
  \[
    J(f)(e)=\left\{\begin{array}{cc}
    f(e) & \text{ if }f(e)\in J(N) \\
    \bullet & \text{ if }f(e)\notin J(N) \\
    \end{array}\right.
  \]
  The functor $J \colon \Matrp \to \LMatrp$ has no left adjoint.
\end{theorem}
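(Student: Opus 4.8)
The statement has two parts: that $J$ exhibits $\LMatrp$ as a reflective subcategory of $\Matrp$, and that $J$ itself admits no left adjoint. I would treat them in turn, the first by constructing the reflection unit and the second by the morphism-counting method.

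For reflectivity, I would first check that $J$ is a well-defined functor. On objects, deleting all non-$\bullet$ loops is a restriction, and restriction preserves the rank of every subset of the surviving ground set, so each surviving nonloop keeps rank $1$ and $\bullet$ becomes the unique loop; hence $J(M)\in\LMatrp$. On morphisms, the one fact that does all the work is that strong maps send loops to loops: applying Lemma~\ref{lem:latticemaps}(b) with $X=\emptyset$ and $Y=\{e\}$ for a loop $e$ forces $\rank(f(e))=0$. Using this, together with the fact that every loop lies in every flat, I would write a flat of $J(N)$ as $G\cap|J(N)|$ for a flat $G$ of $N$ and verify the key identity
\[
  J(f)^{-1}\big(G\cap|J(N)|\big)=f^{-1}(G)\cap|J(M)|,
\]
whose right-hand side is a flat of $J(M)$ because $f$ is strong; this shows $J(f)$ is well-defined and strong. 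Functoriality then reduces to a short case analysis on whether $f(e)$ is a surviving element or a deleted loop, again invoking that loops go to loops.

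For the reflection proper, I would define the unit $\eta_M\colon M\to J(M)$ to be the identity on surviving elements and to send each deleted loop to $\bullet$; the same identity as above shows $\eta_M$ is strong. Given any loopless $L$ and strong $h\colon M\to L$, since $L$ has no loop but $\bullet$ and $h$ preserves loops, $h$ sends every deleted loop of $M$ to $\bullet$, so $h$ factors uniquely as $\bar h\circ\eta_M$ with $\bar h=h|_{|J(M)|}$, and this $\bar h$ is strong. This establishes $J\dashv\iota$ for the inclusion $\iota\colon\LMatrp\hookrightarrow\Matrp$.

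For the second part, suppose $K\dashv J$ with $K\colon\LMatrp\to\Matrp$, so that $\Matrp(K(L),M)\cong\LMatrp(L,J(M))$ naturally. I would take $L$ to be the loopless matroid with a single nonloop $a$ (flats $\{\bullet\}$ and $\{\bullet,a\}$). Every pointed function out of $L$ is automatically strong, so $|\LMatrp(L,J(M))|=|J(M)|=1+(\text{number of nonloops of }M)$ for every $M$; writing $P=K(L)$, this forces $|\Matrp(P,M)|=1+(\text{nonloops of }M)$ for all $M$. Testing on the rank-$0$ matroid $M_0$ with two (loop) elements, whose only flat is the whole set, every pointed function $P\to M_0$ is strong, giving $2^{|P|-1}$ maps; as $M_0$ has no nonloops the count demands $2^{|P|-1}=1$, so $|P|=1$. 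But then $|\Matrp(P,M_1)|=1$ for the one-nonloop matroid $M_1$, whereas the formula requires $1+1=2$, a contradiction. Hence $J$ has no left adjoint.

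The main obstacle is the bookkeeping in the first part: correctly reconciling the two ways in which $J(f)$ can collapse an element — when $f$ already sends it into the chosen flat, and when $f$ sends it to a deleted loop — so as to obtain the clean identity $J(f)^{-1}(G\cap|J(N)|)=f^{-1}(G)\cap|J(M)|$. Once that identity is secured, functoriality, strongness of $\eta$, and the universal property all follow from it, and the non-existence of a left adjoint is a routine count.
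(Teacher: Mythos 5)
Your proof is correct, and the first half coincides with the paper's: the same unit $\eta_M$ (identity on surviving elements, deleted loops to $\bullet$), with the universal property following from the fact that strong maps send loops to loops; you simply spell out the strongness verifications via the identity $J(f)^{-1}(G\cap|J(N)|)=f^{-1}(G)\cap|J(M)|$, which the paper leaves implicit. For the non-existence of a left adjoint both arguments hinge on the same phenomenon --- pointed functions into a two-loop rank-$0$ matroid are all strong, while its loopless reflection admits only the trivial map --- but they are packaged differently. The paper fixes an arbitrary object $K$ with $\card|G(K)|>1$ (after asserting, without proof, that $G$ cannot be constant at the one-point matroid) and derives a contradiction from non-uniqueness of transposes of the unique map $K\to J(M)$. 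You instead pick the concrete single-nonloop test object $L$, observe that $\card\Matrp(K(L),M)$ must equal $1+\#\{\text{nonloops of }M\}$ for every $M$, force $\card|K(L)|=1$ using the two-loop matroid, and then contradict with the one-nonloop matroid. Your version is a pure hom-set cardinality count rather than a uniqueness argument, and it has the advantage of handling uniformly the degenerate case the paper dispatches in half a sentence; the paper's version is shorter because it does not need to pin down $K(L)$ at all. Both are instances of the morphism-counting method the paper uses throughout Section~\ref{sec:adjunctions}.
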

\begin{proof}
  For the unit $\eta_M \colon M\to J(M)$ we may take the strong map that sends every loop to $\bullet$ and every other element to itself. 
  Then a morphisms $f \colon M \to N$ correspond bijectively to $\hat{f}=J(f)$ satisfying $\hat{f} \circ \eta_M = f$.

  Suppose $G \dashv J$. 
  Then $G$ cannot be the constant functor $\bullet$; pick an object $K$ and $\bullet \neq e \in |G(K)|$.
  Let $M$ be the matroid with loops $*$ and $\bullet$. Then strong maps $f \colon K \to J(M)$ must map every element of $K$ to $\bullet$, and any function $G(K)\to M$ is strong. But altering a transpose of $f$ with $e \mapsto *$ into $e \mapsto \bullet$ or vice versa still gives a transpose, so transposes are not unique.
  \qed
\end{proof}

\begin{theorem}\label{thm:LMnorightadjoint}
  The inclusion $\LMatrp\hookrightarrow\Matrp$ has no right adjoint.
\end{theorem}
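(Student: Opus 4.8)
The plan is to assume a right adjoint $R\colon\Matrp\to\LMatrp$ to the inclusion $\iota\colon\LMatrp\hookrightarrow\Matrp$ exists and to derive a numerical contradiction by counting strong maps, in the style of the preceding propositions. The hypothesis supplies a bijection, natural in both arguments, between strong maps $\iota L\to M$ and strong maps $L\to R(M)$, for every loopless pointed matroid $L$ and every pointed matroid $M$. The whole task reduces to exhibiting a single pointed matroid $M$ carrying a loop other than $\bullet$ for which no such $R(M)$ can exist.

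First I would fix $M$ to be the rank-$0$ pointed matroid on $\{\bullet,*\}$ whose only flat is $\{\bullet,*\}$, so that $*$ is a loop distinct from the point; in particular $M\notin\LMatrp$. Since $M$ has a single flat, every pointed function into $M$ is automatically strong, so for any loopless $L$ there are exactly $2^{\card |L|-1}$ morphisms $L\to M$. To pin down $R(M)$ I would test the bijection against the free pointed matroid $E$ with flats $\{\bullet\}$ and $\{\bullet,e\}$: because $E$ is free, all $\card |R(M)|$ pointed functions $E\to R(M)$ are strong, whereas there are only $2$ morphisms $E\to M$. Hence $\card |R(M)|=2$, and as $R(M)$ is loopless with point $\bullet$ it must be $E$ itself. (Equivalently, composing the right adjoints of Theorems~\ref{thm:rightadjointFMatrLMatr} and~\ref{thm:FVHadjunctions} along $\FMatrp\hookrightarrow\LMatrp\hookrightarrow\Matrp$ gives $F\cong F\circ R$, and since $F$ preserves ground sets this already forces $\card |R(M)|=\card |M|=2$.)

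Finally I would obtain the contradiction by evaluating the bijection at the simple matroid $S$ with flats $\big\{\{\bullet\},\{\bullet,1\},\{\bullet,2\},\{\bullet,3\},\{\bullet,1,2,3\}\big\}$. On one side there are $2^{3}=8$ morphisms $S\to M$. On the other side a strong map $S\to R(M)=E$ must pull the rank-$0$ flat $\{\bullet\}$ of $E$ back to a flat of $S$; that preimage is $\{\bullet\}$ together with the nonloops of $S$ sent to $\bullet$, and the flats of $S$ allow $0$, $1$, or all $3$ of the nonloops to be sent to $\bullet$, but never exactly two. This yields only $1+3+1=5$ morphisms $S\to E$, and $8\neq 5$ gives the contradiction.

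I expect the main obstacle to be this last count: one must verify, using the flat-preimage condition of Definition~\ref{def:strongmap}, that sending exactly two of the three nonloops of $S$ to the point is not strong, precisely because $\{\bullet,i,j\}$ is not a flat of $S$. Everything else is routine bookkeeping, but it is exactly the rank-$2$ simple structure of $S$ — as opposed to the free matroid $E$ that $R$ is forced to produce — that creates the mismatch between the two hom-sets.
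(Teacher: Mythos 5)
Your proof is correct. It uses the same underlying technique as the paper --- pinning down $R(M)$ by counting hom-sets and then exhibiting a test object on which the counts disagree --- but the instantiation is genuinely different and somewhat leaner. The paper takes $M$ with flats $\big\{\{\bullet,*\},\{\bullet,*,e\}\big\}$, derives the abstract properties that $R$ must preserve cardinality and reflect surjectivity from the natural isomorphism $F\cong F\circ R$, and then has to split into two cases for the possible loopless structures on a three-element ground set, disposing of one case by a count ($9$ versus $15$) and the other by the surjectivity-reflection property. By choosing the smaller all-loops matroid on $\{\bullet,*\}$, you determine $\card|R(M)|=2$ directly from a single hom-count against the free matroid $E$ (no appeal to $F\cong F\circ R$ needed, though as you note it gives the same bound), and since a two-element loopless pointed matroid is unique there is no case analysis at all; the single comparison $8\neq 5$ against the rank-$2$ simple matroid $S$ finishes it. Your counts check out: all $8$ pointed functions $S\to M$ are strong because $M$ has a unique flat, while a strong map $S\to E$ is determined by the flat $f^{-1}(\{\bullet\})$ of $S$, of which there are exactly $5$, and indeed sending exactly two nonloops to $\bullet$ fails because $\{\bullet,i,j\}$ is not a flat of $S$. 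The trade-off is minor: the paper's route establishes the reusable properties (1) and (2) that it also invokes elsewhere (e.g.\ for $\SMatrp\hookrightarrow\Matrp$), whereas yours is self-contained and shorter for this particular statement.
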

\begin{proof}
  If $N$ were a right adjoint, 
  there would be a natural isomorphism $F \cong F \circ N \colon \Matrp \to \FMatrp$,
  whence $N$ must (1) preserve cardinality and (2) reflect surjectivity.
  Let $D$ be the matroid with flats $\{\{\bullet\},\{\bullet,c\},\{\bullet,a,b\},\{\bullet,a,b,c\}\}$, and $M$ the matroid with flats $\{\{\bullet,*\},\{\bullet,*,e\}\}$. Now $\card |N(D)|=4$ and $\card |N(M)|=3$ by property (1), leaving two possible choices for $\mathcal{F}_{N(M)}$.
  The first choice is
  $\big\{\{\bullet\},\{\bullet,*\},\{\bullet,e\},\{\bullet,*,e\}\big\}$; then there are 9 strong maps $D\to N(M)$ but 15 strong maps $D\to M$.
  The second choice is $\big\{\{\bullet\},\{\bullet,*,e\}\big\}$; by property (2), epimorphisms $D \to N(M)$ correspond  to epimorphisms $D \to M$, but there are 8 of the former and 4 of the latter.
  Either choice gives a contradiction.
  \qed
\end{proof}

The following theorem summarizes all adjunctions in the pointed case.

\begin{theorem}
  The inclusions have the following adjunctions:
  \[\xymatrix@C+25ex@R+15ex{
    *+++++[d]{\Matrp} 
    \ar@{}|-{\perp}[r]
    \ar@<-1ex>@{<-^{)}}[r]
    \ar@<-1ex>[d]
    \ar[dr]
    \ar@<-1ex>@{}|(.55){\lldash}[dr]
    \ar@<-4ex>[dr]
    \ar@<-3ex>@{}|(.55){\lldash}[dr]
    & 
    *+++++[d]{\LMatrp}
    \ar@<-1ex>@{<-}[l]
    \ar@<1ex>[d]
    \ar@<-2ex>[dl]
    \\
    *++++++[d]{\SMatrp}
    \ar@{}|-{\perp}[r]
    \ar@<-1ex>[r]
    \ar@<-1ex>@{^{(}->}[u]
    \ar@{}|-{\dashv}[u]
    \ar@{^{(}->}[ur]
    \ar@<1ex>@{}|(.66){\rrdash}[ur]
    &
    *++++++[d]{\FMatrp}
    \ar@<-1ex>@{^{(}->}[l]
    \ar@{}|-{\dashv}[u]
    \ar@<1ex>@{_{(}->}[u]
    \ar@<-2ex>@{_{(}->}[ul]
    \ar@<-1ex>@{}|(.45){\lldash}[ul]
    \ar@<2ex>[ul]
  }\]
  The functors in the above diagram have no adjoints other than those indicated.
\end{theorem}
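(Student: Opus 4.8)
The plan is to read the claim as asserting two things at once: that the six adjunction chains drawn on the edges and diagonals of the square genuinely exist, and that none of them can be extended by a further adjoint. The key observation is that five of the six edges have already been settled, so the first step is really bookkeeping: match each functor in the diagram to the earlier result that pins down its adjoints on both sides. The top edge $J\dashv(\LMatrp\hookrightarrow\Matrp)$ is the content of the theorem constructing $J$ (which states $J$ has no left adjoint) together with Theorem~\ref{thm:LMnorightadjoint} (the inclusion has no right adjoint); the left edge $\sip\dashv(\SMatrp\hookrightarrow\Matrp)$ combines Proposition~\ref{prop:SMleftadjoint} and Proposition~\ref{prop:SMnorightadjoint}; the bottom and right edges $(\FMatrp\hookrightarrow\SMatrp)\dashv F$ and $(\FMatrp\hookrightarrow\LMatrp)\dashv F$ rest on Theorem~\ref{thm:rightadjointFMatrLMatr}, capped on the left by Theorem~\ref{thm:FVHadjunctions} (these inclusions have no left adjoint) and on the right by the proposition showing these functors $F$ have no right adjoint; and the main diagonal carries the four-term chain $(\FMatrp\hookrightarrow\Matrp)\dashv F\dashv V\dashv H$ of Theorem~\ref{thm:FVHadjunctions}, already capped there since the inclusion has no left adjoint and $H$ has no right adjoint.

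The only edge requiring fresh work is the antidiagonal between $\LMatrp$ and $\SMatrp$. For existence of the reflection $\sip|_{\LMatrp}\dashv(\SMatrp\hookrightarrow\LMatrp)$ I would argue that since $\SMatrp$ is full and reflective in $\Matrp$ (Proposition~\ref{prop:SMleftadjoint}) with reflector $\sip$ landing in $\SMatrp\subseteq\LMatrp$, the reflection restricts to the intermediate full subcategory $\LMatrp$: the unit $\eta_M\colon M\to\sip M$ at a loopless $M$ already lives in $\LMatrp$, so the usual universal property holds there. (As a consistency check, $\sip\cong\sip|_{\LMatrp}\circ J$, so this adjunction is just the ``simple'' half of the factorization of $\sip$ through $J$.) To cap this chain at both ends, the crucial point is that the counterexamples used for the $\Matrp$-versions are already loopless. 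The matroid with parallel elements $a,b$ and loop $\bullet$ and its simple target $S$ from Proposition~\ref{prop:SMleftadjoint} both lie in $\LMatrp$, so that argument applies verbatim to show $\sip|_{\LMatrp}$ has no left adjoint; and the matroids $S$ and $M$ (with flats $\{\{\bullet\},\{\bullet,1\},\{\bullet,2,3\},\{\bullet,1,2,3\}\}$) of Proposition~\ref{prop:SMnorightadjoint} are loopless too. Since that proof only needs the coreflection onto $\FMatrp$, which is available over $\LMatrp$ by Theorem~\ref{thm:rightadjointFMatrLMatr}, the same cardinality-and-surjectivity count transfers to show $(\SMatrp\hookrightarrow\LMatrp)$ has no right adjoint.

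Finally I would make ``no adjoints other than those indicated'' precise. Since an adjunction relates two functors between a fixed pair of categories pointing in opposite directions, it suffices to examine the six edges independently; and along a single chain every interior functor automatically has both its neighbours as adjoints, so only the two extreme functors of each chain need to be capped. Assembling the items above supplies every cap. I expect the main obstacle to be organisational rather than mathematical: ensuring the enumeration of functors attached to the diagram is exhaustive — in particular that the main diagonal really carries the full four-term chain while the antidiagonal carries only the two-term reflection — and verifying, for each reused counterexample, that the chosen matroids in fact lie in the smaller category now under consideration. The looplessness checks on the antidiagonal are the only genuinely substantive verification.
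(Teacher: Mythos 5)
Your proposal is correct and matches the paper's approach, whose entire proof is ``collate the previous results in this section.'' Your explicit treatment of the antidiagonal $\SMatrp\hookrightarrow\LMatrp$ is a worthwhile addition, since the pointed subsections never state that edge separately (unlike the unpointed case, which has a dedicated corollary and proposition); your restriction-of-the-reflector argument and the check that the counterexamples of Propositions~\ref{prop:SMleftadjoint} and~\ref{prop:SMnorightadjoint} already live in $\LMatrp$ supply exactly what the paper leaves implicit.
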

\begin{proof}
  Collate the previous results in this section.
  \qed  
\end{proof}

\subsection{Unpointed categories}

We would like to have a translation principle between pointed and unpointed versions of our matroid categories. Unfortunately, as we have seen in the previous Section, the categories $\Matrp$ and $\Matr$ are not equivalent. So our results for the pointed categories do not necessarily translate into the unpointed versions, and we have to reason directly for the (non)existence of adjoints.

\begin{proposition}\label{freefun} 
  The category $\FMatr$ is a coreflective subcategory of $\SMatr$: the inclusion $\FMatr \hookrightarrow \SMatr$ has a right adjoint $F$ defined by:
  \[
    |F(M)|=|M|,
    \qquad \mathcal{F}_{F(M)} = 2^{|M|},
    \qquad F(f)=f.
  \]
  It extends to right adjoints of the inclusions $\FMatr \hookrightarrow \Matr$ and $\FMatr \hookrightarrow \LMatr$.
\end{proposition}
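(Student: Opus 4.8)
The plan is to exhibit $F$ as the right adjoint directly, and the entire argument rests on a single observation, which I would record first: if $A$ is a free matroid, so that $\mathcal{F}_A = 2^{|A|}$, then \emph{every} function out of $|A|$ is automatically a strong map. Indeed, the strong-map condition of Definition~\ref{def:strongmap} asks that the inverse image of each flat of the codomain be a flat of the domain, and in a free matroid every subset is a flat, so the condition is vacuous. Consequently, for any matroid $N$ and any free matroid $A$, strong maps $A \to N$ coincide with arbitrary functions $|A| \to |N|$. This immediately gives the adjunction at the level of hom-sets: both $\FMatr(A, F(M))$ and $\LMatr(\iota A, M)$ reduce to functions $|A| \to |M|$, and the natural bijection between them is the identity.

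To make the coreflection explicit I would next check that $F$ is a well-defined functor $\LMatr \to \FMatr$. Each $F(M)$ has $\mathcal{F}_{F(M)} = 2^{|M|}$, hence is free, so $F$ genuinely lands in $\FMatr$; and for a strong map $f \colon M \to N$ the function $F(f) = f \colon F(M) \to F(N)$ is strong by the observation above, its domain $F(M)$ being free. Functoriality is immediate since $F$ acts as the identity on underlying functions.

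Then I would give the adjunction data via the counit. Define $\epsilon_M \colon F(M) \to M$ to be the identity function on $|M|$; it is a strong map because its domain $F(M)$ is free, and it is natural in $M$ since every component, every $F(f)$, and the inclusion $\iota$ all act as identities on underlying sets. For the universal property, take a free matroid $A$ and a strong map $g \colon \iota A \to M$ in $\LMatr$; since $\epsilon_M$ is the identity on underlying sets and $|F(M)| = |M|$, the equation $\epsilon_M \circ \iota(\hat g) = g$ forces $\hat g$ to equal $g$ as a function, and this function \emph{is} a strong map $A \to F(M)$ precisely because $A$ is free. Existence and uniqueness of the factorization are therefore both automatic. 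This establishes $\iota \dashv F$, and computing the unit $\eta_A \colon A \to F(\iota A) = F(A)$ shows it is the identity (as $A$ free gives $F(A) = A$), confirming that $\FMatr$ is a full coreflective subcategory.

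Finally, for the extension I would observe that no step above used looplessness of the codomain $M$: the counit is strong and the factorization is forced purely from freeness of the \emph{domain} $A$. Hence the identical construction, with the same formula $\mathcal{F}_{F(M)} = 2^{|M|}$, yields right adjoints for $\FMatr \hookrightarrow \SMatr$ and $\FMatr \hookrightarrow \Matr$, the only change being the ambient category in which $M$ is taken. I expect no genuine obstacle in this proposition; the closest thing to content is the opening observation that strong maps out of free matroids are arbitrary functions, and the only points meriting care are confirming that $F(M)$ really is an object of $\FMatr$ (which is what pins down the formula for its flats) and that $\epsilon_M$ is a strict identity on underlying sets, since it is this strictness that makes both halves of the universal property trivial.
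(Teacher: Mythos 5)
Your proof is correct and follows essentially the same route as the paper, whose entire argument is to take the universal arrow to be the identity function and $\hat{f}=f$; you have simply spelled out why this works (every function out of a free matroid is strong, so the factorization is forced and automatic). If anything, you state the direction of the universal arrow more carefully than the paper does: the coreflection arrow is the counit $\epsilon_M\colon F(M)\to M$, which is strong because its domain is free, whereas the identity function in the opposite direction $M\to F(M)$ would not generally be a strong map.
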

\begin{proof}
  Simply take the unit $\eta_M \colon M \to F(M)$ to be the identity on elements, and $\hat{f}=f$.
  \qed
\end{proof}

The functor $F \colon \Matr\to\FMatr$ has a right adjoint $V \colon \FMatr\to\Matr$, which in turn has a right adjoint $H \colon \Matr\to\FMatr$, both defined as in Theorem~\ref{thm:FVHadjunctions}, and $H$ has no right adjoint; we omit the proofs.

\begin{proposition}
  The functor $F \colon \SMatr\to\FMatr$ has no right adjoint.
\end{proposition}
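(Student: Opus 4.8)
The plan is to assume a right adjoint $V \colon \FMatr \to \SMatr$ exists, so that $F \dashv V$ furnishes a natural bijection $\FMatr(F(M),N) \cong \SMatr(M,V(N))$ for every simple matroid $M$ and free matroid $N$, and then to derive a numerical contradiction by counting morphisms, exactly in the abbreviated style announced after the pointed case. The left-hand side is immediate: since $F(M)$ is the free matroid on $|M|$ and $|-|$ restricts to an isomorphism $\FMatr \to \cat{FinSet}$, the set $\FMatr(F(M),N)$ is just the set of functions $|M| \to |N|$, of size $|N|^{|M|}$.

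First I would pin down $V$ on small free matroids by showing it preserves cardinality, mirroring the cardinality-preservation properties used in Propositions~\ref{prop:SMnorightadjoint} and~\ref{thm:LMnorightadjoint}. The key observation is that a strong map \emph{out of} a free matroid is an arbitrary function, since in a free matroid every subset is a flat and the preimage condition is vacuous. Taking $M$ to be a free matroid (free matroids are themselves simple, and $F(M)=M$ in that case), the adjunction reads $|V(N)|^{|M|} = \SMatr(M,V(N)) \cong \FMatr(F(M),N) = |N|^{|M|}$; choosing $M$ to be the one-element free matroid yields $|V(N)| = |N|$.

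Then I would specialize. Take $N$ to be the free matroid on a two-element set, so that $V(N)$ is a simple matroid on two elements; the only such matroid is the free one, because two elements forced to be non-loops and non-parallel must have flats $\emptyset,\{x\},\{y\},\{x,y\}$. Now take $M$ to be the simple rank-$2$ matroid on $\{a,b,c\}$ with flats $\emptyset,\{a\},\{b\},\{c\},\{a,b,c\}$ (three points on a line). On one side $\FMatr(F(M),N) = 2^3 = 8$. On the other side a strong map $M \to V(N)$ must pull back the two rank-$1$ flats of the free matroid $V(N)$ to a partition of $\{a,b,c\}$ into two flats of $M$; since the only nontrivial flats of $M$ are its singletons, no such partition exists except the two trivial ones, so $\SMatr(M,V(N)) = 2$. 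As $8 \neq 2$ contradicts the bijection, no right adjoint $V$ can exist.

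The main obstacle is the identification of $V(N)$: without the cardinality-preservation step one cannot control the target matroid, and the count only bites once $V(N)$ is forced to be free. The other point requiring care is the equality $\SMatr(M,V(N)) = 2$, which relies precisely on the chosen $M$ having its two-element subsets \emph{not} be flats, so that the ground set admits no nontrivial splitting into flats.
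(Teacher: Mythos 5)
Your proof is correct and takes essentially the same approach as the paper: a morphism-counting argument across the purported adjunction, using the three-point line and small free matroids as test objects. The only difference is organizational --- you first force $\card V(N)=\card N$ via the singleton free matroid and then identify $V(N)$ as the unique simple matroid on two elements, which lets you avoid the paper's case analysis on whether $V(N)$ contains the three-point line as a submatroid.
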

\begin{proof}
  Suppose $V$ were a right adjoint. 
  Let $M$ be the free matroid on $m$ elements, and let $D$ be the matroid with flats $\big\{\emptyset,\{a\},\{b\},\{c\},\{a,b,c\}\big\}$. There are $m^3$ strong maps $F(D)\to M$.
  If $V(M)$ contains $D$ as a submatroid, then there are at least 9 maps $D\to V(M)$, because there are 9 maps $D\to D$. Otherwise, for the map $f$ to be strong, every element of $D$ must map to the same element of $V(M)$, giving $\card |V(M)|$ many maps $D\to V(M)$, so $\card |V(M)| = m^3$.
  Now take $D$ to be the simple matroid with $|M|=\{e\}$. There are $m$ maps $F(D)\to M$, but $m^3$ maps $D\to V(M)$, and in general $m\ne m^3$.
  \qed
\end{proof}

\begin{proposition}
  The functor $F \colon \LMatr\to\FMatr$ has a right adjoint $U$ given by:
  \[
    |U(M)|=|M|,
    \qquad 
    \mathcal{F}_{U(M)}=\{\emptyset,|M|\},
    \qquad
    U(f)=f.
  \]
  The functor $U$ has no right adjoint.
\end{proposition}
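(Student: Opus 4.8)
The plan is to treat the two assertions separately: first verify that $U$ is a well-defined functor $\FMatr\to\LMatr$ and is right adjoint to $F$, and then show that $U$ itself cannot be a left adjoint because it fails to preserve coproducts.

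For the adjunction $F\dashv U$, I would begin by checking that $U(N)$ genuinely lands in $\LMatr$. The family $\{\emptyset,|N|\}$ satisfies the flat axioms trivially: the only minimal flat properly containing $\emptyset$ is $|N|$, and $\{|N|\setminus\emptyset\}$ partitions $|N|\setminus\emptyset=|N|$; and since no element lies in $\emptyset$, there are no loops. The adjunction then reduces to identifying both hom-sets with bare functions. Because $\FMatr\cong\cat{FinSet}$, the set $\FMatr(F(M),N)$ is exactly all functions $|M|\to|N|$. On the other side, a function $g\colon|M|\to|N|$ is a strong map $M\to U(N)$ precisely when $g^{-1}(\emptyset)$ and $g^{-1}(|N|)$ are flats of $M$; but these are $\emptyset$ and $|M|$, and $\emptyset$ is a flat \emph{exactly because $M$ is loopless}. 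Hence $\LMatr(M,U(N))$ is again all functions $|M|\to|N|$, the identity-on-functions bijection is clearly natural, and $F\dashv U$ follows, with unit $M\to U(F(M))$ and counit $F(U(N))\to N$ both identities on underlying sets. This step is routine; the only point worth stressing is that looplessness of $M$ is precisely what makes $\emptyset$ a flat and so renders every function strong.

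For the second assertion, I would argue that a right adjoint to $U$ would make $U$ a left adjoint, hence colimit-preserving, and then exhibit a coproduct that $U$ does not preserve. Since $\FMatr\cong\cat{FinSet}$, the coproduct of free matroids is the free matroid on the disjoint union of ground sets, while by Proposition~\ref{prop:coproducts} the coproduct in $\LMatr$ of $U(M)$ and $U(N)$ has flats $\{F\cup G\mid F\in\{\emptyset,|M|\},\,G\in\{\emptyset,|N|\}\}$. Taking $M$ and $N$ to each be the one-element free matroid, $U(M)+U(N)$ is the rank-$2$ free matroid on two elements (four flats), whereas $U(M\sqcup N)$ is the rank-$1$ matroid on two parallel elements (two flats). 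These are not isomorphic, so $U$ does not preserve this binary coproduct, and therefore has no right adjoint.

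The computations are all short, and I expect no genuine obstacle. If one prefers to remain within the morphism-counting idiom used elsewhere in this section rather than invoking that left adjoints preserve colimits, the same contradiction can be produced by converting the failure of coproduct preservation into unequal hom-set cardinalities against a fixed test object. The only thing requiring care is the bookkeeping of the flat structures of $U(M)+U(N)$ versus $U(M\sqcup N)$, which differ in both their number of flats and their rank.
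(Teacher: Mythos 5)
Your proof is correct. The first half (the adjunction $F\dashv U$) is essentially the paper's argument, just spelled out: the paper simply says to take the unit to be the identity and $\hat f=f$, and your observation that looplessness of $M$ is exactly what makes $\emptyset$ a flat, hence every function $|M|\to|N|$ a strong map $M\to U(N)$, is the content behind that one-liner. The second half is where you genuinely diverge. The paper stays within the morphism-counting idiom it uses throughout Section~4: supposing $U\dashv G$, it takes $D$ the free matroid on two elements, notes that $U(D)$ is the two-element rank-one matroid so that there are exactly $2$ strong maps $U(D)\to D$ (only constant functions pull the singleton flats back to flats of $U(D)$), while $\FMatr(D,G(D))$ has $\card G(D)^2$ elements, and $n^2=2$ has no solution. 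You instead invoke the general fact that a left adjoint preserves colimits and exhibit a binary coproduct that $U$ destroys: $U(F(1)+F(1))$ is the rank-one matroid on two parallel elements while $U(F(1))+U(F(1))$ is the rank-two free matroid. Both arguments are short and correct; yours buys a cleaner conceptual reason (and correctly relies on Proposition~\ref{prop:coproducts} for the coproduct of loopless matroids being computed as in $\Matr$), while the paper's avoids any appeal to colimit preservation and so is self-contained at the level of hom-set cardinalities, which --- as you note yourself --- is the style deliberately adopted for all the non-existence results in that section.
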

\begin{proof}
  Take the unit to be the identity on elements and $\hat{f}=f$ to establish $F \dashv U$.
  Suppose $U \dashv G$. 
  Take $D$ to be the free matroid on 2 elements, so $U(D)$ is the matroid with 2 parallel elements. 
  Taking $M=D$ gives exactly 2 morphisms $U(D)\to M$, whereas there are $\card G(M)^2$ morphisms $D\to G(M)$. 
  \qed
\end{proof}

\begin{proposition}
  None of the inclusions $\FMatr \hookrightarrow\SMatr$, $\FMatr\hookrightarrow\LMatr$, and $\FMatr\hookrightarrow\Matr$ have a left adjoint.
\end{proposition}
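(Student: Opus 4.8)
The plan is to exploit the isomorphism $\FMatr \cong \cat{FinSet}$ to turn a hypothetical reflection into a rigid numerical constraint, and then to violate that constraint with a single well-chosen simple matroid. Suppose one of the three inclusions $i \colon \FMatr \hookrightarrow \mathcal{C}$, with $\mathcal{C}$ among $\SMatr$, $\LMatr$, $\Matr$, had a left adjoint $L$. For any $M \in \mathcal{C}$ and any finite set $Y$, the adjunction together with $\FMatr \cong \cat{FinSet}$ gives a bijection
\[
  \#\{\text{strong maps } M \to F(Y)\} \;=\; \#\,\FMatr(L(M),F(Y)) \;=\; \card{Y}^{\,\card{L(M)}}.
\]
Writing $n = \card Y$ and $k=\card{L(M)}$ (a fixed natural number once $M$ is fixed), this says that $n \mapsto \#\{\text{strong maps } M \to F(Y)\}$ must be the monomial $n^k$. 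The whole proof reduces to exhibiting one $M$, lying in all three categories, for which this count is provably not a monomial in $n$.

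First I would record how to count strong maps into a free matroid. Because every subset of $F(Y)$ is a flat, a function $f\colon |M| \to Y$ is strong precisely when $f^{-1}(S)$ is a flat of $M$ for every $S \subseteq Y$; and as $S$ ranges over subsets of $Y$, the set $f^{-1}(S)$ ranges exactly over the unions of blocks of the partition $\{f^{-1}(y)\}_{y \in \mathrm{im}\,f}$ of $|M|$. Thus strong maps $M \to F(Y)$ correspond bijectively to pairs consisting of a partition $\pi$ of $|M|$ \emph{all of whose unions of blocks are flats} (call it a flat-partition) together with an injection of the blocks of $\pi$ into $Y$. Counting the injections, the number of strong maps $M \to F(Y)$ equals $\sum_{\pi} n(n-1)\cdots(n-m_\pi+1)$, summed over flat-partitions $\pi$ with $m_\pi$ blocks.

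Next I would take $M$ to be the simple matroid on $\{a,b,c,d\}$ whose flats are $\emptyset$, the four singletons, the six two-element subsets, and $\{a,b,c,d\}$ (the uniform matroid $U_{3,4}$). Its only flat-partitions are the one-block partition and the three partitions into two pairs: every other partition of $\{a,b,c,d\}$ has some union of its blocks equal to a three-element set, and no three-element set is a flat of $M$. Hence the count is
\[
  n + 3\,n(n-1) \;=\; 3n^2 - 2n,
\]
a polynomial of degree $2$ with leading coefficient $3$. Since two polynomials agreeing at all $n \in \mathbb{N}$ coincide, $3n^2-2n$ cannot equal $n^k$ for any $k$ (the monomial $n^k$ has leading coefficient $1$). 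This contradicts the displayed constraint. As $U_{3,4}$ is simple it lies in $\SMatr \subseteq \LMatr \subseteq \Matr$, so the same witness defeats all three inclusions at once.

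The hard part is not the categorical bookkeeping but choosing the witness correctly. The flat-partition count is a partial sum of the Stirling identity $\sum_m S(p,m)\,n(n-1)\cdots(n-m+1) = n^p$, so for a free matroid all flat-partitions occur and the count collapses back to the monomial $n^{\card M}$; likewise $U_{2,3}$ has only the one-block flat-partition and gives the harmless monomial $n$. One must therefore pick a matroid whose family of flat-partitions contributes several falling factorials but does not recombine into a single power, and verifying that $U_{3,4}$ has exactly the four flat-partitions above (so the multiplicities $1$ and $3$ do not telescope) is the one genuinely combinatorial step. For $\Matr$ alone there is an even shorter route, since any matroid with a loop admits no strong map to a nonempty free matroid and so has count $0 \neq n^k$; but $U_{3,4}$ has the advantage of covering all three categories uniformly.
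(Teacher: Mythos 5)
Your proof is correct and follows essentially the same route as the paper's: the paper uses the very same witness (the matroid on $\{a,b,c,d\}$ whose flats are $\emptyset$, the singletons, the pairs, and the whole set, i.e.\ $U_{3,4}$), counts the same $8$ strong maps into the free matroid on two elements to force $\card L(M)=3$, and then derives a contradiction at a three-element free matroid. Your flat-partition formula $3n^2-2n$ is just a cleaner, self-contained packaging of that same count, replacing the paper's appeal to the unit argument of Theorem~\ref{thm:finset} with the observation that $21\neq 27$.
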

\begin{proof}
  We treat the first inclusion as the other two are similar. Suppose $K$ were a left adjoint.
  Let $N$ be the free matroid on 2 elements, and let $M$ be the matroid with flats
  $\big\{\emptyset,\{a\},\{b\},\{c\},\{d\},\{a,b\},\{a,c\},\{a,d\},\{b,c\},\{b,d\},\{c,d\},\{a,b,c,d\}\big\}$.
  There are 8 strong maps $M\to N$, and $2^{\card K(M)}$ strong maps $K(M)\to N$, so $\card K(M)=3$.

  As in Theorem~\ref{thm:finset}, the unit $\eta$ has to map $a \neq b \in |M|$ to the same element of $K(M)$ and each of the other two elements of $M$ to the two remaining elements of $K(M)$. Picking $N$ with 3 elements gives a contradiction as in that Theorem.
  \qed
\end{proof}

The inclusions $\SMatr \hookrightarrow \Matr$ and $\LMatr \hookrightarrow \Matr$ have no left adjoint $K$: if $M$ has at least one loop then there are no morphisms $M \to K(M)$ at all.

\begin{corollary}
  The category \SMatr~is a reflective subcategory of \LMatr: the inclusion has a left adjoint $\si$ as in Lemma~\ref{thm:simplification}. The functor $\si$ has no left adjoint.
\end{corollary}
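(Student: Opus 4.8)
The statement has two halves, which I would treat separately. For the reflectivity, the left adjoint is simplification: $\si$ sends a loopless matroid $M$ to the simple matroid whose ground set is the collection of rank-1 flats (equivalently, the parallel classes) of $M$ with the inherited flat structure, and the unit $\eta_M \colon M \to \si(M)$ collapses each parallel class to a point. This is the construction of Lemma~\ref{thm:simplification}, so the first half is essentially a citation; the only thing to verify is the universal property, that every strong map $f \colon M \to S$ into a simple matroid factors uniquely through $\eta_M$. The crux is that parallel elements of $M$ are forced to the same image: if $a,b$ lie in a common rank-1 flat, then by Lemma~\ref{lem:latticemaps}(c) the image flat $\clos(f(\clos\{a\}))$ has height $0$ or $1$; since $S$ is loopless the height-$0$ case is the empty flat, which cannot contain the nonloops $f(a),f(b)$, so the image is a height-$1$ flat, and in a simple matroid such a flat is a singleton, forcing $f(a)=f(b)$. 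Hence $f$ descends uniquely along $\eta_M$, so $\si$ is left adjoint to the inclusion.

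For the second half I would argue by counting morphisms, as elsewhere in this section. Suppose $K \dashv \si$, so that strong maps $K(S) \to M$ in $\LMatr$ correspond bijectively to strong maps $S \to \si(M)$ in $\SMatr$. Take $S$ to be the one-element simple matroid (a single nonloop). The decisive probe is the rank-1 loopless matroid $P$ with ground set $\{a,b\}$ in which $a$ and $b$ are parallel: its only flats are $\emptyset$ and $\{a,b\}$, so every function $K(S) \to P$ is automatically strong, whence the number of strong maps $K(S) \to P$ is $2^{\card K(S)}$. On the other hand $\si(P)$ is the one-element simple matroid, so there is exactly one strong map $S \to \si(P)$, and the bijection forces $2^{\card K(S)} = 1$, i.e.\ $K(S) = \emptyset$. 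Now replace $P$ by the free matroid $N$ on two (non-parallel) elements: then $\si(N)=N$ has two elements, giving two strong maps $S \to \si(N)$, while there is only one strong map from the empty matroid $K(S)$ to $N$. This contradiction shows $\si$ has no left adjoint.

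The easy conceptual part is the reflection, inherited from the simplification result. The main obstacle in the non-existence argument is that a priori $K(S)$ could be an arbitrary loopless matroid, so I need a probe sharp enough to pin it down completely; the trick that makes this work is choosing a target $P$ whose only flats are the trivial ones, so that strongness imposes no constraint and the hom-count collapses to the pure cardinality count $2^{\card K(S)}$. Once $K(S)$ is forced to be empty, any simple matroid with more than one element supplies the contradiction.
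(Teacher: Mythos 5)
Your proof is correct. The first half matches the paper, which simply cites the simplification construction (Theorem~\ref{thm:simplification} via Remark~\ref{prop:L}); your verification that parallel elements are forced to a common image via Lemma~\ref{lem:latticemaps}(c) is a sound spelling-out of why the universal property holds. For the second half, however, you take a genuinely different route from the paper. The paper's proof defers to the argument of Proposition~\ref{prop:SMleftadjoint}: with $S$ the one-element simple matroid and $P$ the two-element rank-1 loopless matroid, it observes that any transpose $\hat f \colon K(S) \to P$ of the unique map $S \to \si(P)$ can have $a$ and $b$ swapped in its image and still be a transpose, so transposes are not unique. You instead run a hom-set count: since every function into $P$ is strong, $\card\LMatr(K(S),P)=2^{\card K(S)}$ must equal $\card\SMatr(S,\si(P))=1$, forcing $K(S)=\emptyset$, after which the free matroid on two elements gives $1\neq 2$. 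Both arguments use the same probe objects; the paper's symmetry argument is shorter and kills uniqueness directly (note it tacitly uses that $K(S)$ must be nonempty for the unit $S \to \si(K(S))$ to exist), while your counting argument is more self-contained in that it never needs to reason about what the unit or transpose looks like, at the cost of a second probe. Your approach is entirely in the spirit of the counting method the paper announces it will use repeatedly in that section.
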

\begin{proof}
  The first statement follows from Remark~\ref{prop:L} as before.
  The second statement is proven just like in Proposition~\ref{prop:SMleftadjoint}.
  \qed
\end{proof}

\begin{proposition}
  The inclusions $\SMatr \hookrightarrow\Matr$ and $\SMatr \hookrightarrow \LMatr$ have no right adjoint.
\end{proposition}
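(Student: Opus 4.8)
The plan is to mimic the strategy that worked for the pointed analogues (Propositions~\ref{prop:SMnorightadjoint} and Theorem~\ref{thm:LMnorightadjoint}): assume a right adjoint exists, extract rigid constraints on what it must do to objects by composing with the free-matroid functor, and then exhibit a small pair of matroids on which a morphism count fails. Concretely, suppose $R \colon \Matr \to \SMatr$ (respectively $R \colon \LMatr \to \SMatr$) were a right adjoint to the inclusion. First I would compose with the right adjoint $F \colon \Matr \to \FMatr$ of Theorem~\ref{thm:FVHadjunctions} (respectively its unpointed analogue) to produce a natural isomorphism $F \cong F \circ R$, forcing $R$ to \emph{preserve cardinality} on ground sets; this is the unpointed counterpart of ``property~(1)'' in the pointed proofs, and it drastically narrows the possible flat structures $R$ can assign.

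**Next I would** fix a concrete small simple matroid $S$ together with a non-simple matroid $M$ whose simplifications differ, exactly as in Proposition~\ref{prop:SMnorightadjoint}, but now with the point deleted. A good candidate is $S$ the rank-$2$ simple matroid on three elements $\{1,2,3\}$ with flats $\{\emptyset,\{1\},\{2\},\{3\},\{1,2,3\}\}$, and $M$ the rank-$2$ matroid on $\{1,2,3\}$ with a two-element rank-$1$ flat, say flats $\{\emptyset,\{1\},\{2,3\},\{1,2,3\}\}$. By the cardinality constraint, $R(M)$ is a simple matroid on three elements, so its flat lattice is pinned down to at most a couple of geometric possibilities (the rank-$2$ uniform matroid $S$ itself, or a higher-rank refinement such as the free matroid). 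For each surviving candidate I would count the strong maps $S \to M$ in $\Matr$ against the strong maps $S \to R(M)$ in $\SMatr$; the adjunction demands these be equinumerous, and the arithmetic should fail because collapsing $\{2,3\}$ to a single flat in $M$ creates strong maps into $M$ that have no counterpart into any three-element simple target.

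**The loopless case** $\SMatr \hookrightarrow \LMatr$ runs along the same lines, with $F$ replaced by the right adjoint $U \colon \LMatr \to \FMatr$ and with $M$ a loopless (but non-simple) matroid such as the rank-$1$ matroid on two parallel elements; here the failure is cleanest because a simple target of the same cardinality simply cannot receive the ``diagonal'' strong maps that the parallel pair admits. **The main obstacle** I anticipate is bookkeeping rather than conceptual: one must verify that $F\circ R \cong F$ really does force cardinality preservation in the unpointed setting (the absence of a distinguished loop makes the image of $F$ slightly more delicate than in the pointed case), and one must check the morphism counts honestly, since a careless choice of $M$ can yield a candidate $R(M)$ for which the two counts accidentally agree. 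Choosing $M$ so that its non-trivial rank-$1$ flat is strictly larger than any flat a same-cardinality \emph{simple} matroid can exhibit is what guarantees the counts diverge, and I would select the witnesses with that asymmetry explicitly in mind.
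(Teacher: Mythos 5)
Your strategy is the paper's strategy almost verbatim: derive $F \cong F\circ R$ to force cardinality preservation, take $S$ with flats $\{\emptyset,\{1\},\{2\},\{3\},\{1,2,3\}\}$ and $M$ with flats $\{\emptyset,\{1\},\{2,3\},\{1,2,3\}\}$, reduce $\mathcal{F}_{R(M)}$ to the two possible simple structures on three elements, and compare hom-set sizes. The one genuine issue is in the execution of the count for these witnesses: if $R(M)$ has the flats of $S$ (i.e.\ $R(M)\cong U_{2,3}$), the two hom-sets have the \emph{same} size --- there are $9$ strong maps $S\to M$ (the constant map to $1$ plus the $8$ functions into $\{2,3\}$) and also $9$ strong maps $S\to S$ (the $6$ bijections plus the $3$ constants) --- so the "equinumerosity" argument does not close that case. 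You anticipated exactly this failure mode ("a careless choice of $M$ can yield a candidate $R(M)$ for which the two counts accidentally agree") but your chosen $M$ falls into it. The paper escapes by also extracting property (2) from $F\cong F\circ R$, that $R$ reflects surjectivity: the $6$ surjective maps $S\to R(M)\cong S$ would have to transpose to surjective strong maps $S\to M$, of which there are none. You omit property (2) entirely for the unpointed case. Alternatively, your loopless witness --- $M$ the rank-$1$ matroid on two parallel elements --- is itself an object of $\Matr$ and disposes of \emph{both} inclusions by pure counting (with $S=U_{2,3}$ one gets $8$ strong maps $S\to M$ versus $2$ strong maps $S\to F(2)$, and $F(2)$ is the only two-element simple matroid), so promoting that witness to the general case would repair the argument without invoking property (2).
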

\begin{proof}
  If $R \colon \Matr \to \SMatr$ were a right adjoint, it would satisfy properties (1) and (2) as in Proposition~\ref{prop:SMnorightadjoint}.
  Let $S$ be the matroid with flats $\big\{\emptyset,\{1\},\{2\},\{3\},\{1,2,3\}\big\}$, and let $M$ be the matroid with flats $\big\{\emptyset,\{1\},\{2,3\},\{1,2,3\}\big\}$. The family of flats of $R(M)$ must be either that of $S$ or
  \[
    \big\{\emptyset,\{1\},\{2\},\{3\},\{1,2\},\{2,3\},\{1,3\},\{1,2,3\}\big\}.
  \]
  In either case, the number of strong maps $S\to R(M)$ does not equal the number of strong maps $S\to M$. 
  \qed
\end{proof}

\begin{theorem}
  The inclusion $\LMatr \hookrightarrow\Matr$ has no right adjoint.
\end{theorem}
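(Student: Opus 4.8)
The plan is to mirror the pointed argument of Theorem~\ref{thm:LMnorightadjoint}. First I would suppose, for contradiction, that the inclusion $\LMatr \hookrightarrow \Matr$ has a right adjoint $N \colon \Matr \to \LMatr$. Composing right adjoints, the functor $F \circ N$, where $F \colon \LMatr \to \FMatr$ is the right adjoint to $\FMatr \hookrightarrow \LMatr$ from Proposition~\ref{freefun}, is right adjoint to the composite inclusion $\FMatr \hookrightarrow \LMatr \hookrightarrow \Matr$. But that composite is just $\FMatr \hookrightarrow \Matr$, whose right adjoint is $F \colon \Matr \to \FMatr$ (the extension noted in Proposition~\ref{freefun}); by uniqueness of adjoints there is a natural isomorphism $F \cong F \circ N$. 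Since $|F(M)| = |M|$ for every matroid, this forces $N$ to preserve the cardinality of the ground set.

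Next I would exhibit a witness. Take $M$ to be the matroid on $\{*,e\}$ with flats $\big\{\{*\},\{*,e\}\big\}$, so that $*$ is a loop and $e$ a nonloop. Cardinality preservation gives $\card N(M)=2$, and since $N(M)$ is loopless its family of flats must be one of the only two loopless matroid structures on a two-element set, namely $\big\{\emptyset,\{x,y\}\big\}$ (two parallel elements) or $\big\{\emptyset,\{x\},\{y\},\{x,y\}\big\}$ (free). As source I would take the loopless rank-$2$ matroid $S$ on $\{1,2,3\}$ with flats $\big\{\emptyset,\{1\},\{2\},\{3\},\{1,2,3\}\big\}$, the three-point line. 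Because $S$ is loopless, the adjunction yields a bijection $\Matr(S,M)\cong\Matr(S,N(M))$, so the two hom-sets must have equal size. One then counts: there are exactly $5$ strong maps $S\to M$, one for each flat of $S$ chosen as the preimage of the loop $*$ (the rest of $S$ going to $e$), whereas there are only $2$ strong maps $S\to N(M)$ in the free case and $8$ in the parallel case. Since neither $2$ nor $8$ equals $5$, this is the desired contradiction.

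The delicate point I expect to be the main obstacle is the choice of source: with the more obvious candidate $D$ having flats $\big\{\emptyset,\{c\},\{a,b\},\{a,b,c\}\big\}$, the two possible structures for $N(M)$ yield $4$ and $8$ maps respectively, and the free candidate's $4$ coincides with the $4$ strong maps $D\to M$, so $D$ fails to defeat both options at once. The virtue of the three-point line is that it has a flat (any singleton) whose complement is not a flat; this simultaneously suppresses the count against the free structure (down to $2$) while leaving all $8$ maps against the parallel structure, so a \emph{single} source separates $M$ from both admissible values of $N(M)$. This lets me avoid the auxiliary surjectivity-reflection property used in the pointed proof of Theorem~\ref{thm:LMnorightadjoint}, though that property remains available as a fallback if one prefers first to pin $N(M)$ down to a single structure.
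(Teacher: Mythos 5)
Your proof is correct and follows the paper's own method: derive cardinality preservation of the putative right adjoint $N$ from the natural isomorphism $F \cong F \circ N$, then refute both admissible loopless structures on $N(M)$ by counting strong maps. The only difference is the choice of witnesses --- the paper uses a three-element $M$ with two loops and needs two successive source matroids (first $\big\{\emptyset,\{c\},\{a,b\},\{a,b,c\}\big\}$ to narrow $N(M)$ to one option, then the three-point line to kill it), whereas your two-element $M$ and single source $U_{2,3}$ give the counts $5$ versus $2$ or $8$ in one step, which is a mild streamlining; all of your counts check out.
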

\begin{proof}
  If $N$ were a right adjoint, it would satisfy properties (1) and (2) as in Theorem~\ref{thm:LMnorightadjoint}.
  Let $D$ be the matroid with flats $\{\emptyset,\{c\},\{a,b\},\{a,b,c\}\}$, and $M$ the matroid with flats $\{\{a,b\},\{a,b,c\}\}$. Now $\card |N(D)| = \card |N(M)|=3$ by property (1), giving four choices for $\mathcal{F}_{N(M)}$.
  First, $\big\{\emptyset,\{a,b,c\}\big\}$ gives 27 strong maps $D\to N(M)$.
  Second, $\big\{\emptyset,\{c\},\{a,b\},\{a,b,c\}\big\}$ gives 15 strong maps $D\to N(M)$.
  Third, $\big\{\emptyset,\{a\},\{b\},\{c\},\{a,b,c\}\big\}$ gives 9 strong maps $D\to N(M)$.
  Fourth and last, $\big\{\emptyset,\{a\},\{b\},\{c\},\{a,b\},\{b,c\},\{a,c\},\{a,b,c\}\big\}$ gives 9 strong maps $D\to N(M)$.
  As there are 15 strong maps $D\to M$, it must be the second option.
  But if $D$ has flats $\big\{\emptyset,\{a\},\{b\},\{c\},\{a,b,c\}\big\}$, there are 9 strong maps $D\to N(M)$ and 15 strong maps $D\to M$.
  \qed
\end{proof}

The following theorem summarizes all adjunctions in the unpointed case.

\begin{theorem}
  The inclusions have the following adjunctions:
  \[\xymatrix@C+25ex@R+15ex{
    *+++++[d]{\Matr} 
    \ar@<-1ex>@{<-^{)}}[r]
    \ar[dr]
    \ar@<-1ex>@{}|(.55){\lldash}[dr]
    \ar@<-4ex>[dr]
    \ar@<-3ex>@{}|(.55){\lldash}[dr]
    & 
    *+++++[d]{\LMatr}
    \ar@<1ex>[d]
    \ar@<-2ex>[dl]
    \\
    *++++++[d]{\SMatr}
    \ar@{}|-{\perp}[r]
    \ar@<-1ex>[r]
    \ar@<-1ex>@{^{(}->}[u]
    \ar@{^{(}->}[ur]
    \ar@<1ex>@{}|(.66){\rrdash}[ur]
    &
    *++++++[d]{\FMatr}
    \ar@<-1ex>@{^{(}->}[l]
    \ar@{}|-{\dashv}[u]
    \ar@<1ex>@{_{(}->}[u]
    \ar@<-2ex>@{_{(}->}[ul]
    \ar@<-1ex>@{}|(.45){\lldash}[ul]
    \ar@<-2ex>@{}|-{\dashv}[u]
    \ar@<-3ex>[u]
    \ar@<2ex>[ul]
  }\]
  The functors in the above diagram have no adjoints other than those indicated.
\end{theorem}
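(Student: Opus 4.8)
The plan is to treat this theorem as a \emph{collation}, exactly as in the pointed case: every arrow in the square, and every adjunction symbol placed between two arrows, is the conclusion of one of the propositions and theorems already established in this subsection. The proof therefore reduces to matching the diagram data against those results and then verifying that nothing further can be appended to either end of any chain.

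First I would read off the \emph{positive} adjunctions along each side and diagonal. Along the $\FMatr$--$\Matr$ diagonal, Proposition~\ref{freefun} (in its extension to $\Matr$) exhibits the inclusion as a coreflection with coreflector $F$, and $F\colon\Matr\to\FMatr$ heads the chain $F\dashv V\dashv H$ of the following paragraph; concatenating these gives the string of four adjoints drawn on that diagonal. Along the $\FMatr$--$\LMatr$ side, Proposition~\ref{freefun} gives the coreflection and the proposition introducing $U$ gives the further right adjoint, so $F\dashv U$. Along the $\FMatr$--$\SMatr$ side, Proposition~\ref{freefun} again supplies the coreflector $F$. Along the $\SMatr$--$\LMatr$ diagonal, the Corollary provides the reflection with reflector $\si$. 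The two inclusions $\SMatr\hookrightarrow\Matr$ and $\LMatr\hookrightarrow\Matr$ carry no adjunction symbol, consistent with nothing positive having been proved for them.

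Second I would assemble the \emph{negative} statements, which are what give the phrase ``no adjoints other than those indicated'' its content: for each functor in the diagram I point to the result excluding the next adjoint on either side. These are that $\si$ has no left adjoint and that $\SMatr\hookrightarrow\LMatr$ has no right adjoint; that each inclusion out of $\FMatr$ has no left adjoint while $F\colon\SMatr\to\FMatr$, $U$, and $H$ each have no right adjoint; and that $\SMatr\hookrightarrow\Matr$ and $\LMatr\hookrightarrow\Matr$ possess neither a left nor a right adjoint.

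I expect the main obstacle to be twofold, and bookkeeping rather than conceptual. First, one must supply the $F\dashv V\dashv H$ argument in the unpointed setting, which the text defers with ``we omit the proofs''; this runs exactly as in Theorem~\ref{thm:FVHadjunctions}, and ultimately Theorem~\ref{thm:finset}, so no new idea is required. More delicate is the \emph{completeness} claim: to justify reading the diagram as a maximal chain of adjoints, I would confirm that every functor appearing in it has had its further adjoints ruled out at both ends, so that the catalogued negative results genuinely close off each chain and leave no gap between the positive and negative halves.
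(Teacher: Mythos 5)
Your proposal is correct and takes essentially the same approach as the paper, whose entire proof is the single line ``Collate the previous results in this subsection''; your elaboration correctly matches each positive adjunction and each non-existence result to the propositions preceding the theorem, including the observation that the unpointed $F\dashv V\dashv H$ chain is deferred to the pointed argument.
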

\begin{proof}
  Collate the previous results in this subsection.      
  \qed
\end{proof}

\section{Deletion and contraction}\label{sec:deletioncontraction}

Let us recall some standard terminology from matroid theory.

\begin{definition}\label{def:deletion}
  Let $M$ be a matroid. The \emph{deletion} of $Y\subseteq |M|$ from $M$ is the matroid $M \setminus Y$ with ground set $|M| \setminus Y$ and rank function $X \mapsto \rank_M(X)$. The resulting matroid is said to be \emph{embedded} in $M$, and the strong map $M \setminus Y \to M$ is called an \emph{embedding}.
  The \emph{contraction} of $M$ by $Z\subseteq |M|$ is the matroid $M/Z$ with ground set $|M| \setminus Z$ and rank function $X \mapsto \rank_M(X \cup Z) - \rank_M(Z)$. 
  A \emph{minor} of $M$ is the matroid resulting from a sequence of deletions and contractions of $M$.   
\end{definition}

We can identify the (categorical) subobjects, that is, equivalence classes of monomorphisms $M \rightarrowtail N$, where two such monomorphisms are equivalent when there is an isomorphism $M \simeq M'$ making the triangle commute. In terms of their domains, subobjects of $N$ correspond to the matroids from which there exists an injective strong map into $N$.\footnote{Some publications~\cite{kahn1982varieties,kung2001twelve,white:matroids} incorrectly state that subobjects in this category coincide with matroid minors. For a counterexample to this statement, the canonical map $F(M) \to M$ is injective, but $F(M)$ is not generally a minor of $M$.}

Next we move to the dual notion of subobjects, (categorical) quotients: that is, equivalence classes of epimorphisms $M \twoheadrightarrow N$ where two such epimorphisms are equivalent when there is an isomorphism $N \simeq N'$ making the triangle commute.

\begin{definition}
  A matroid $Q$ is a \emph{(matroid) quotient} of $M$ if there exist a matroid $N$ and some $X\subseteq |N|$ so that $M=N\setminus X$ and $Q=N/X$. 
\end{definition}

Hence by definition, (matroid) quotients are strong maps that are composed of a contraction after an embedding. The rest of this section proves that quotients are precisely the bijective strong maps, from which it follows by Corollary~\ref{cor:monomorphisms} that matroid quotients are not categorical quotients in the category of matroids. This also leads us to a characterisation of subobjects; these are embeddings followed by matroid quotient maps.

Theorem~\ref{thm:quotients} in a later section shows that matroid quotients do correspond to categorical quotients in a related category, that we now introduce.

We can derive that contractions, like embeddings, are strong maps.

\begin{corollary}\label{cor:contractions}
  If $M$ is a pointed matroid and $Z \subseteq |M|$, there is a strong map $M \to M/Z$.
\end{corollary}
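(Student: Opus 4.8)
The plan is to write down an explicit strong map and verify the claim using the rank characterisation of strong maps from Lemma~\ref{lem:latticemaps}. Since we are in the pointed setting, we may assume $\bullet\notin Z$, so that the ground set $|M/Z|=|M|\setminus Z$ still contains the point and $M/Z$ is again pointed. Define $f\colon M\to M/Z$ by $f(x)=x$ for $x\notin Z$ and $f(x)=\bullet$ for $x\in Z$; this is a well-defined pointed function collapsing $Z$ onto the loop $\bullet$.

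First I would record that $\bullet$ remains a loop in $M/Z$: by definition of the contraction rank function, $\rank_{M/Z}(\{\bullet\})=\rank_M(\{\bullet\}\cup Z)-\rank_M(Z)=0$, since $\bullet$ is a loop in $M$. Hence adjoining or deleting $\bullet$ never changes a rank in $M/Z$, so for every $S\subseteq|M|$ one has $\rank_{M/Z}(f(S))=\rank_{M/Z}(S\setminus Z)=\rank_M(S\cup Z)-\rank_M(Z)$.

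Next I would invoke Lemma~\ref{lem:latticemaps}(b), which reduces strongness of $f$ to the inequality $\rank_{M/Z}(f(Y))-\rank_{M/Z}(f(X))\le\rank_M(Y)-\rank_M(X)$ for all $X\subseteq Y\subseteq|M|$. Substituting the formula above, the two copies of $-\rank_M(Z)$ cancel and this becomes $\rank_M(Y\cup Z)-\rank_M(X\cup Z)\le\rank_M(Y)-\rank_M(X)$, equivalently $\rank_M(Y\cup Z)+\rank_M(X)\le\rank_M(Y)+\rank_M(X\cup Z)$.

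The main, and essentially only, obstacle is this last inequality, which I would settle by the valuation axiom. Applying submodularity to the pair $Y$ and $X\cup Z$ gives $\rank_M\big(Y\cup(X\cup Z)\big)+\rank_M\big(Y\cap(X\cup Z)\big)\le\rank_M(Y)+\rank_M(X\cup Z)$. Because $X\subseteq Y$, the union $Y\cup(X\cup Z)$ equals $Y\cup Z$ and the intersection $Y\cap(X\cup Z)$ contains $X$; monotonicity then yields $\rank_M(X)\le\rank_M\big(Y\cap(X\cup Z)\big)$, and chaining the two bounds produces exactly the required inequality. This completes the verification that $f$ is strong, giving the desired strong map $M\to M/Z$.
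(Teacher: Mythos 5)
Your proof is correct and follows essentially the same route as the paper: both verify the rank inequality of Lemma~\ref{lem:latticemaps}(b) by applying the valuation (submodularity) axiom to $Y$ and $X\cup Z$ and then using monotonicity via $X\subseteq Y\cap(X\cup Z)$. You are merely more explicit about the underlying function and the bookkeeping for the point $\bullet$, which the paper defers to the remark following the corollary.
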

\begin{proof}
  For $A=X\cup Z$, it follows from Definition~\ref{def:matroid} that
  \[
    \rank_M(A)+\rank_M(Y) \geq \rank_M(A\cup Y)+\rank_M(A\cap Y),
  \]
  so $\rank_M(Y)-\rank_M(A\cap Y) \geq \rank_M(A\cup Y)-\rank_M(A)$, whence
  \begin{align*}
    \rank_M(Y)-\rank_M(X)&\geq \rank_M(Y\cup Z)-\rank_M(X\cup Z)\\
    &=\rank_M(Y\cup Z)-\rank_M(Z)-(\rank_M(X\cup Z)-\rank_M(Z))\\
    &=\rank_{M/Z}(Y)-\rank_{M/Z}(X).
  \end{align*}
  Lemma~\ref{lem:latticemaps} now establishes the result.
  \qed
\end{proof}

By the standard definition of the contraction operation, the strong map corresponding to contraction acts as the identity on noncontracted elements and maps the rest to the distinguished loop. Alternatively, one may redefine the contracted matroid on the original ground set, keeping the original elements as loops. In the latter case, the contraction map acts as the identity on all elements.

Finally we establish that matroid quotients are precisely bijective strong maps.

\begin{lemma}\label{lem:quotients}
  A function $f \colon M \to N$ is a bijective strong map if and only if it factors as an embedding $M \to M\backslash X=Q$ followed by a contraction $Q=N/X \to N$.
  \[\xymatrix@R-1ex{
    & Q \ar^-{\text{contraction}}[dr] \\
    M \ar_-{f}[rr] \ar^-{\text{embedding}}[ur] && N
  }\]
\end{lemma}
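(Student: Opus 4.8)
The plan is to prove the two implications separately, concentrating the work in the forward direction, where the claim amounts to the classical fact that a bijective strong map is a matroid quotient, realized by a single deletion--contraction pair on an enlarged ground set.

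For the easy direction, suppose $f$ is a composite $M = Q\setminus X \hookrightarrow Q \to Q/X = N$ of an embedding followed by a contraction. Embeddings are strong by Definition~\ref{def:deletion} and contractions are strong by Corollary~\ref{cor:contractions}, so $f$ is strong. For bijectivity, observe that on ground sets the embedding is the inclusion $|M| = |Q|\setminus X \hookrightarrow |Q|$ whose image is exactly the noncontracted elements, while the contraction restricts to the identity on those elements; hence the composite is the identity on $|Q|\setminus X = |M| = |N|$, which is a bijection.

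For the forward direction, let $f$ be a bijective strong map and use it to identify $|M|$ with $|N|$, so that I may assume $|M| = |N| = E$ and $f = \id{E}$. By Lemma~\ref{lem:latticemaps} strongness then reads $\rank_N(Y) - \rank_N(X) \le \rank_M(Y) - \rank_M(X)$ for all $X \subseteq Y \subseteq E$; equivalently the deficiency $d(A) := \rank_M(A) - \rank_N(A)$ is nonnegative and monotone. Setting $k = d(E)$, I would adjoin a set $X$ of $k$ fresh elements and define a candidate rank function on $E \sqcup X$ by
\[
  \rank_Q(S) = \min\big\{\, \rank_M(S \cap E) + \card(S \cap X),\ \rank_N(S \cap E) + k \,\big\}.
\]
Granting that this is a matroid, its required properties are short computations: since $d(A) \le k$ one gets $\rank_Q = \rank_M$ on subsets of $E$, so $Q \setminus X = M$; and since $\rank_N \le \rank_M$ one gets $\rank_Q(X) = k$ and $\rank_Q(A \cup X) - k = \rank_N(A)$, so $X$ is independent and $Q/X = N$. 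The composite $M = Q\setminus X \hookrightarrow Q \to Q/X = N$ is then the identity on $E$, recovering $f$.

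The main obstacle is verifying that $\rank_Q$ satisfies the axioms of Definition~\ref{def:matroid}. Boundedness (indeed $\rank_Q(S) \le \card S$) and monotonicity pass immediately from $\rank_M$ and $\rank_N$, but the valuation inequality is delicate, since a pointwise minimum of submodular functions need not be submodular. The point is that it holds here precisely because $d$ is monotone: writing $g, h$ for the two functions inside the minimum, $g - h$ is monotone, so the set where $g \le h$ is downward closed, and because $d(S\cap T) \le d(S), d(T) \le d(S\cup T)$ a short case analysis --- on which of $g, h$ attains the minimum at $S \cup T$ and at $S \cap T$ --- reduces the desired inequality in every case to submodularity of $g$ or of $h$ together with a monotonicity comparison of $d$. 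This single step is where care is needed; everything else is routine, and one may instead cite the classical factorization of quotients~\cite{white:matroids}.
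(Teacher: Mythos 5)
Your proof is correct, but it takes a more self-contained route than the paper, which disposes of the lemma in one line: sufficiency from Corollary~\ref{cor:contractions}, necessity by citing Higgs~\cite{higgs1968strong}. The easy direction you give is exactly the paper's. For the hard direction, where the paper defers entirely to the literature, you reconstruct the Higgs major explicitly: your $Q$ is the $k$-th Higgs lift of $N$-with-$k$-added-loops towards $M\oplus F(X)$, packaged as the single rank formula $\min\{\rank_M(S\cap E)+\card(S\cap X),\ \rank_N(S\cap E)+k\}$. The computations $Q\setminus X=M$ and $Q/X=N$ check out (the former because the deficiency $d$ is bounded by $k=d(E)$, the latter because $\rank_N\leq\rank_M$), and the one genuinely delicate step --- submodularity of the minimum --- is correctly diagnosed: since $g-h$ is monotone the region $\{g\leq h\}$ is downward closed, and the four-way case split on where each of $\min$ at $S\cup T$ and at $S\cap T$ is attained does reduce, in each case, to submodularity of one of $g,h$ plus a monotonicity comparison (the only case needing both is $g(S)<h(S)$, $h(T)<g(T)$, where submodularity of $h$ combined with $(h-g)(S\cap T)\geq(h-g)(S)$ closes the gap). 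So your sketch is completable as stated; I would only ask you to write that case analysis out in full, since it is the entire mathematical content of the direction the paper outsources. What the two approaches buy: the paper's citation is consistent with its survey style and with its later Lemma~\ref{lem:higgs}, which redoes the major construction as an $n$-fold iteration of elementary lifts because it needs to track representability at each step; your single closed-form rank function gets the whole major at once and makes the verification elementary, at the cost of a page of submodularity bookkeeping, but it would not by itself yield the representability refinement the paper needs later.
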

\begin{proof}
  Sufficiency follows from Corollary~\ref{cor:contractions}, necessity is proven by Higgs~\cite{higgs1968strong}.
  \qed
\end{proof}

\begin{lemma}
  Every contraction is a coequalizer in $\Matrp$, but not conversely.
\end{lemma}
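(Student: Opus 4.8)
The plan is to prove the two halves separately. For the forward direction—that every contraction is a coequalizer—I would use the characterization of contraction already given. Recall from Corollary~\ref{cor:contractions} and the discussion following it that contraction by $Z$ can be presented, keeping the original ground set, as the strong map $c\colon M \to M/Z$ that fixes noncontracted elements and sends the elements of $Z$ to the point $\bullet$. The natural candidate for the pair it coequalizes is the pair $f,g\colon K \to M$ where $K$ is some matroid whose image is exactly $Z$ (or the flat $\clos(Z)$), with $f$ an embedding of $Z$ into $M$ and $g$ the constant map at $\bullet$; intuitively, coequalizing this pair forces every element of $Z$ to be identified with the loop $\bullet$, which is precisely what contraction does. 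The key verification is the universal property: given any strong map $h\colon M \to P$ with $h\circ f = h\circ g$, one checks that $h$ kills $Z$ (sends it to a loop) and so factors uniquely through $c$. Here I would lean on the description of the underlying set coequalizer from Remark~\ref{rem:constructlimit} together with the explicit flat structure of $M/Z$, checking that the induced map on ground sets is strong by verifying that preimages of flats of $P$ pull back to flats of $M/Z$.

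For the reverse direction—that not every coequalizer is a contraction—the cleanest route is to exhibit a coequalizer in $\Matrp$ that fails to be a bijective strong map, since by Lemma~\ref{lem:quotients} every contraction (indeed every matroid quotient) is bijective on ground sets, whereas a genuine quotient construction can collapse two elements. The simplest witness is a coequalizer that identifies two distinct nonloop elements. For instance, take a pair of parallel maps into a small matroid whose coequalizer in $\cat{FinSet}$ identifies two ground-set elements, and arrange (as in the analysis used in Proposition~\ref{prop:coequalizers}) that a finest matroid structure exists on the quotient set. Since the resulting coequalizer map is not injective on ground sets, it cannot be a contraction, which is necessarily bijective.

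I expect the main obstacle to be the forward direction, specifically identifying the right coequalizing pair $f,g$ and showing the universal property holds on the nose. The subtlety is the one flagged in the footnote to Proposition~\ref{prop:coequalizers}: the set-level quotient map need not automatically be strong, and arbitrary coequalizers in $\Matrp$ need not exist. So the real content is to confirm that for the specific pair defining a contraction, the finest matroid structure on the quotient does exist and coincides with the flat structure of $M/Z$, and that any cocone factors through it by a \emph{strong} map. I would handle this by taking $g$ to be the point-collapsing map so that coequalizing forces exactly the loops of $M/Z$, then verifying strongness of the factorization directly from Definition~\ref{def:strongmap} using that flats of $M/Z$ are, by definition of contraction, the flats of $M$ containing $Z$ intersected appropriately. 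The reverse direction is routine once a concrete counterexample is fixed.
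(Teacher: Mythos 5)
Your forward direction is essentially the paper's argument. The paper coequalizes the pair $f=\mathrm{id}$ and $g=(\text{collapse } Z\text{ to }\bullet)$ out of the free pointed matroid on $|N|$, whereas you coequalize the embedding of $Z$ against the constant map at $\bullet$; both pairs consist of strong maps, both generate the same equivalence relation $z\sim\bullet$ for $z\in Z$, and the verifications you flag (that the flats of $N/Z$ are exactly the subsets whose preimage under $c$ is a flat of $N$, and that any coequalizing cocone factors through $c$ by a strong map) are precisely the content the paper leaves implicit. That half is fine.

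The converse direction, however, rests on a false premise. You claim that ``every contraction (indeed every matroid quotient) is bijective on ground sets'' and propose to detect non-contractions by non-injectivity. But the contraction map $c\colon N\to N/Z$ --- in the very presentation you use in your first half --- sends all of $Z$ to $\bullet$, so it is \emph{not} injective whenever $Z$ contains a non-point element. Indeed, if contractions were bijective, the first half of the lemma would collapse: a coequalizer in $\Matrp$ has the set-theoretic quotient as its ground set, so a bijective coequalizer is an isomorphism. What Lemma~\ref{lem:quotients} actually asserts is that the \emph{composite} of an embedding with a contraction (a matroid quotient $N\setminus X\to N/X$) is bijective; the contraction leg alone is not. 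The property that singles out contractions among surjections is that their unique non-singleton fibre lies over the point. Hence your counterexample must identify two nonloops into a class distinct from $[\bullet]$, and the reason it is not a contraction is that it then has a non-singleton fibre over a non-point element --- not that it fails to be injective (a test every nontrivial contraction also fails). This is what the paper does: keep $f=\mathrm{id}$ and let $g$ send all nonloops to one nonloop, so the coequalizer merges all nonloops into a single non-point class. With that corrected criterion, and an actual verification that your chosen coequalizer exists (which your ``arrange that a finest matroid structure exists'' gestures at but does not supply), the counterexample goes through.
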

\begin{proof}
  Suppose $c \colon N \to N \slash Z$ is a contraction with $c(z)=\bullet$ for $Z \subseteq |N|$.
  Let $M$ be the free matroid on $|M|=|N|$.
  Define $f,g \colon M \to N$ by $f(x)=x$, and 
  \[
        g(x)=\left\{\begin{array}{cc} 
          x, & \mbox{ if }x\notin Z, \\ 
          \bullet, & \mbox{ if }x\in Z. \\
        \end{array}\right.    
  \]
  Then $c$ is a coequalizer of $f$ and $g$.

  Conversely, keeping $f$ the same but letting $g$ send all nonloop elements to the same nonloop element  results in a coequalizer that is not a contraction.
  \qed
\end{proof}

\section{Factorization}\label{sec:factorization}

In this section we study how morphisms between matroids can be factored into easier classes of strong maps.
Let us first recall the basic definition~\cite{adamekherrlichstrecker:joyofcats}.

\begin{definition}\label{def:factorization}
  A \emph{weak factorization system} in a category consists of two classes of morphisms $\mathcal{L}$ and $\mathcal{R}$ such that:
  \begin{itemize}
  \item every morphism $f$ factors as $f = r \circ l$ for some $l \in \mathcal{L}$ and $r \in \mathcal{R}$;
  \item both $\mathcal{L}$ and $\mathcal{R}$ contain all isomorphisms;
  \item if $l,l' \in \mathcal{L}$, $r,r' \in \mathcal{R}$, and arbitrary morphism $f,g$ make the following diagram commute, then there is a \emph{fill-in} $h$ making both squares commute:
  \begin{equation}\label{eq:fillin}\begin{aligned}\xymatrix{
    M \ar_-{f}[d] \ar^-{l}[r] & I \ar@{-->}^-{h}[d]\ar[r]^r & N\ar[d]^g \\
    M' \ar_-{l'}[r] & I'\ar_-{r'}[r]& N'
  }\end{aligned}\end{equation}
  \end{itemize}
  In an \emph{orthogonal factorization system} the fill-in $h$ is additionally unique.
\end{definition}

The standard example of an orthogonal factorization system is that every function between sets factors as an epimorphism followed by a monomorphism. The category of matroids has a very similar orthogonal factorization system.

\begin{lemma}\label{lem:factorization1}
  The category $\Matrp$ has an orthogonal factorization system where $\mathcal{L}$ consists of epimorphisms and $\mathcal{R}$ consists of embeddings.
\end{lemma}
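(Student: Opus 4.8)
The plan is to imitate the epi--mono factorization of functions, factoring each pointed strong map through its image equipped with the restriction matroid structure. First I would construct the factorization. Given a pointed strong map $f \colon M \to N$, set $S = f(|M|) \subseteq |N|$ and let $N_S$ denote the deletion $N \setminus (|N| \setminus S)$, a pointed matroid by Definition~\ref{def:deletion} whose flats are exactly $\{G \cap S \mid G \in \mathcal{F}_N\}$ and whose point is still $\bullet$. Writing $f = m \circ e$, where $e \colon M \to N_S$ is $f$ corestricted to $S$ and $m \colon N_S \to N$ is the embedding of $N_S$, gives the desired decomposition. The map $m$ lies in $\mathcal{R}$ by definition and is pointed. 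The map $e$ is pointed and surjective, hence epic by Corollary~\ref{cor:monomorphisms} and Remark~\ref{rem:reviewerscomment}, so $e \in \mathcal{L}$; it is strong because for every flat $G$ of $N$ one has $e^{-1}(G \cap S) = f^{-1}(G)$, which is a flat of $M$ since $f$ is strong.

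Next I would dispose of the remaining bookkeeping and reduce the lifting condition. Both classes contain the isomorphisms: an isomorphism is bijective, hence surjective and epic, and by Lemma~\ref{lem:isomorphisms} its direct images of flats are flats, so its flats correspond bijectively to those of its codomain and it is (up to relabelling) the embedding of a total deletion. For the fill-in in~\eqref{eq:fillin}, I would observe that producing a fill-in $h$ that makes both squares commute, uniquely, is exactly the statement that every epimorphism $e$ is left orthogonal to every embedding $m$: that is, for every commuting square
\[\xymatrix{
  A \ar_-{e}[d] \ar^-{u}[r] & X \ar^-{m}[d] \\
  B \ar_-{v}[r] \ar@{-->}^-{h}[ur] & Y
}\]
with $e$ an epimorphism and $m$ an embedding, there is a unique diagonal $h \colon B \to X$ with $h \circ e = u$ and $m \circ h = v$. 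Applying this with $e$ the left leg, $m$ the right leg, and the evident composites as $u$ and $v$ yields both the existence and the uniqueness of the fill-in.

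Finally I would solve this lifting problem. On underlying functions the diagonal is forced: as $e$ is surjective, each $b \in B$ equals $e(a)$ for some $a$, and $h(b) = u(a)$ is the only possibility; this is well defined because $m$ is injective and $m \circ u = v \circ e$, which also yields uniqueness. The only substantive point — and the step I expect to be the main obstacle — is that $h$ is a \emph{strong} map, and this is precisely where the embedding hypothesis on $m$ is indispensable. Because $m$ is an embedding, the flats of its domain $X$ are exactly the sets $m^{-1}(G)$ for $G$ a flat of $Y$; hence for any flat of $X$ we compute $h^{-1}(m^{-1}(G)) = (m \circ h)^{-1}(G) = v^{-1}(G)$, a flat since $v$ is strong. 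Were $m$ merely an injective strong map rather than a genuine embedding, the domain $X$ could carry additional flats not of the form $m^{-1}(G)$ and this argument would break down; this is exactly why $\mathcal{R}$ must consist of embeddings and not of all monomorphisms.
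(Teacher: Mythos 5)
Your proposal is correct and follows essentially the same route as the paper: factor a map through its image carrying the deletion (restriction) structure, and obtain the fill-in as the forced map on underlying sets, which is strong precisely because the flats of the middle object of an embedding are the preimages of flats of its codomain. The paper states this in one line ("the fill-in is the restriction of $g$ to the image of $l$, which is a strong map"); your version supplies the verifications it leaves implicit, and they all check out.
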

\begin{proof}
  The fill-in is the restriction of $g$ to the image of $l$, which is a strong map.
  \qed  
\end{proof}

Epimorphisms in $\Matrp$ can be further decomposed into a quotient followed by a lattice-preserving map~\cite[page~228]{white:matroids}. This would give another orthogonal factorization system with $\mathcal{L}$ quotients and and $\mathcal{R}$ lattice-preserving maps followed by embeddings, except that the fill-in $h$, which has to be the function $l' \circ f \circ l^{-1}$ by Lemma~\ref{lem:quotients}, need not be a strong map.

Finally, any quotient can be decomposed into an embedding followed by a contraction~\cite[page~228]{white:matroids}. Again this does not give a weak factorization system, but there exists a construction for the minimal matroid through which a quotient factors like this~\cite{kennedy1975majors}.

\begin{theorem}\label{thm:factorization2}
  The category $\Matrp$ has an orthogonal factorization system where $\mathcal{L}$ consists of lattice-preserving maps and $\mathcal{R}$ consists of maps that are injective on elements of each rank-1 flat.
\end{theorem}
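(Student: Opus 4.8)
The plan is to obtain this system by refining the (epimorphism, embedding) factorization of Lemma~\ref{lem:factorization1}. Throughout I read a \emph{lattice-preserving map} as a surjective strong map $l \colon M \to I$ for which the induced map $L(l) \colon L(M) \to L(I)$ of Lemma~\ref{lem:latticemaps}(c) is an isomorphism of lattices; surjectivity is essential, since an injective map that merely selects representatives of parallel classes also induces a lattice isomorphism yet lies in $\mathcal{R}$, not $\mathcal{L}$. I first record two facts used repeatedly. If $L(l)$ is an isomorphism, then $l(x)=l(x')$ forces $\clos_M(\{x\})=\clos_M(\{x'\})$, so $l$ identifies elements only within a common rank-1 flat (parallel nonloops, or loops); conversely such a surjection is lattice-preserving. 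And if $r \colon I \to N$ is injective on each rank-1 flat, then, because every rank-1 flat contains the rank-0 flat, $r$ is injective on loops as well whenever $I$ has positive rank.

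For the factorization, given a strong map $f \colon M \to N$, define an equivalence on $|M|$ by $x \sim x'$ iff $f(x)=f(x')$ and $\clos_M(\{x\})=\clos_M(\{x'\})$, and let $I$ have ground set $|M|/\mathord\sim$ with flats $\{F/\mathord\sim \,\mid\, F \in \mathcal{F}_M\}$. Since $\sim$ merges only parallel elements and loops, the rank of a subset is unchanged, so $I$ is a genuine matroid and the quotient $l \colon M \to I$ is a surjective strong map with $L(l)$ an isomorphism, hence lattice-preserving. The map $f$ is constant on $\sim$-classes, so it descends to $r \colon I \to N$ with $f = r \circ l$; and $r$ is injective on each rank-1 flat of $I$, because $r(\bar x)=r(\bar y)$ with $\bar x,\bar y$ in a common rank-1 flat means $x,y$ lie in a common rank-1 flat of $M$ with $f(x)=f(y)$, i.e. $x \sim y$. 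Both classes contain all isomorphisms, and their intersection is exactly the isomorphisms: a lattice-preserving map that is injective on rank-1 flats merges nothing, hence is bijective, and since $L(l)$ is an isomorphism its direct images of flats are closed, so Lemma~\ref{lem:isomorphisms} applies.

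It remains to produce a unique fill-in, which for a factorization system is equivalent to the lifting property: for $l \colon A \to B$ lattice-preserving, $r' \colon X \to Y$ injective on rank-1 flats, and strong maps $u \colon A \to X$, $v \colon B \to Y$ with $r'u = vl$, there is a unique strong $d \colon B \to X$ with $dl=u$ and $r'd=v$. Since $l$ is surjective, any such $d$ is forced by $d(l(a))=u(a)$, which gives uniqueness; for existence I must check this is well defined. If $l(a)=l(a')$ then $a,a'$ lie in a common rank-1 flat of $A$, so strength of $u$ places $u(a),u(a')$ in a common rank-1 flat $Q$ of $X$; from $r'(u(a))=v(l(a))=v(l(a'))=r'(u(a'))$ and injectivity of $r'$ on $Q$ we get $u(a)=u(a')$. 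Thus $d$ is a well-defined function with $dl=u$ and $r'd=v$, and it is strong because for any flat $G$ of $X$ surjectivity of $l$ gives $d^{-1}(G)=l(u^{-1}(G))$, which is a flat of $B$ since $u^{-1}(G)$ is a flat of $A$ and $L(l)$ is an isomorphism.

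The main obstacle is the well-definedness of the diagonal in the last paragraph, where the two defining properties interlock: one must pass from $l(a)=l(a')$ through parallelism in $A$, to parallelism of the images in $X$, and finally use injectivity of $r'$ on a single rank-1 flat to recover $u(a)=u(a')$. The delicate point is the mixed case in which one of $u(a),u(a')$ is a loop, where the shared rank-1 flat is that of the nonloop image; this is also where the control of the right class on loops, and the indispensability of surjectivity in the definition of $\mathcal{L}$, become visible, since dropping surjectivity breaks existence of $d$.
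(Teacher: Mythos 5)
Your overall architecture---factor through a quotient of the source, then verify orthogonality via the equivalent lifting property---is sound and genuinely different from the paper's: the paper builds the middle object $I$ from the target side, decreeing $L(I)=L(M)$ but giving $I$ the rank-$0$ flat of $N$ and populating each rank-$1$ flat with indexed copies of the images, whereas you quotient $M$. Your observation that $\mathcal{L}$ must consist of surjections (otherwise a map that merely adds a parallel copy lies in $\mathcal{L}\cap\mathcal{R}$ without being invertible, which already contradicts unique fill-ins) is well taken and is glossed over in the paper.

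There is, however, a genuine gap, sitting exactly where you suspected: the interaction of loops with rank-$1$ flats. Your verification that $r\in\mathcal{R}$ claims that $r(\bar x)=r(\bar y)$ with $\bar x,\bar y$ in a common rank-$1$ flat of $I$ forces $x\sim y$; this fails when one of $x,y$ is a loop and the other a nonloop of that flat with $f(x)=f(y)$, since then $\clos_M(\{x\})\neq\clos_M(\{y\})$ and so $x\not\sim y$. The smallest instance is the unique map $f$ from the matroid $\{\bullet,a\}$ ($a$ a nonloop) to the one-point matroid: your $\sim$ is trivial, $r=f$, and $r$ identifies the loop $\bullet$ with the nonloop $a$ inside the rank-$1$ flat $\{\bullet,a\}$. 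Worse, under your strong reading of $\mathcal{R}$ (injective on \emph{all} elements of each rank-$1$ flat) this $f$ admits no $(\mathcal{L},\mathcal{R})$-factorization whatsoever, since any $I$ with $L(I)\cong L(M)$ has a rank-$1$ flat containing $\bullet$ and a nonloop, both forced onto the single element of the target. The reading that makes the theorem work---and the one the paper's construction actually produces---is that an $\mathcal{R}$-map never identifies two distinct elements having the \emph{same closure}: it is injective on the set of loops and on the set of nonloops of each rank-$1$ flat, but may collapse a nonloop onto a loop. Your quotient does satisfy this, so your factorization survives; and your lifting argument also survives, because $l(a)=l(a')$ with $l\in\mathcal{L}$ forces $\clos(\{a\})=\clos(\{a'\})$, hence $\clos(\{u(a)\})=\clos(\{u(a')\})$ by applying $L(u)$, so $u(a),u(a')$ are either both loops or parallel nonloops---your ``mixed case'' cannot actually occur---and injectivity of $r'$ on the relevant closure class finishes the argument. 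But the loop--loop case genuinely requires $\mathcal{R}$ to control loops: if $\mathcal{R}$ asked only for injectivity on parallel nonloops, the map from two loops to one loop would lie in $\mathcal{R}$, and orthogonality against the $\mathcal{L}$-map collapsing two parallel elements fails outright. So the proof closes only after pinning $\mathcal{R}$ down as injectivity on closure classes and repairing your verification that $r\in\mathcal{R}$ accordingly.
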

\begin{proof}
For simplicity, and without loss of generality, we identify isomorphic lattices in this proof.
  Given a morphism $f \colon M \to N$, a matroid $I$ is uniquely specified by saying that $L(I)=L(M)$, that $I$ has the same rank-0 flat as $N$, and that the nonloop elements of each rank-1 flat $F_i$ are copies of the elements of the elements of $f(F_i)$ (including extra copies of the loops).
  Now $f$ decomposes as $\smash{M \stackrel{l}{\to} I \stackrel{r}{\to} N}$, where $l$ is the lattice-preserving map that acts as $f$ on elements when ignoring indices $i$, and $r$ is the map with $L(r)=L(f)$ that sends each element of $I$ to the unindexed version of the element in $N$.
  Then $l$ is strong because $L(l)=\id{L(M)}$, and $r$ is strong because $L(r)=L(f)$. 
  By definition $l$ is lattice-preserving, and $r$ is obviously injective on elements of each rank-1 flat.
  Both $\mathcal{L}$ and $\mathcal{R}$ contain all isomorphisms by Lemma~\ref{lem:isomorphisms}.

  For the fill-in $h$ in~\eqref{eq:fillin}, take the morphism with $L(h)=L(f)$ that acts on $g$ on elements when ignoring indices. 
  This is by construction the unique strong map that makes both squares commute, as we conclude by considering what $h$ does on lattices and on elements.
  \qed
\end{proof}

\begin{remark}\label{remark:quillen}
  The category $\Matrp$ has a double factorization system~\cite{pultrtholen:quillen}: every morphism decomposes as a lattice-preserving map followed by an epimorphism injective on elements of each rank-1 flat followed by an embedding.
  This would give an \emph{orthogonal Quillen factorization system}, whose \emph{fibrations} are the maps that distinguish between parallel elements (\ie are injective on elements of each rank-1 flat), whose \emph{cofibrations} are the epimorphisms, and whose \emph{weak equivalences} are the monomorphisms $f$ such that $L(f)$ preserves rank, except that $g$ need not be a weak equivalence when $f$ and $g \circ f$ are.
\end{remark}
\begin{proof}
  As embeddings are by construction injective, the orthogonal factorization systems of Lemma~\ref{lem:factorization1} and Theorem~\ref{thm:factorization2} are comparable in the sense of~\cite[Proposition~2.6]{pultrtholen:quillen}, and hence form a double factorization system~\cite[Theorem~2.7]{pultrtholen:quillen}. The middle class of morphisms are those that are both epic and injective on rank-1 flats, that is, strong maps that are bijective when restricted to rank-1 flats.
  As for the purported orthogonal Quillen factorization system, the weak equivalences are the strong maps that factor as a lattice-preserving map followed by an embedding, which are precisely the strong maps as in the statement, but~\cite[Theorem 3.10.1b]{pultrtholen:quillen} is not satisfied.
  For an explicit counterexample, take $\F(M_1)=\{\{\bullet\}\},\F(M_2)=\{\{\bullet\},\{\bullet,*\}\},\F(M_3)=\{\{\bullet,*\}\}$ and $f:M_1\to M_2$, $g:M_2\to M_3$ mapping each element to itself; $f$ and $g\circ f$ are clearly embeddings whereas $g$ cannot be expressed as a lattice-preserving map followed by an embedding.  
  \qed
\end{proof}

\section{Functors}\label{sec:functors}

This section considers functors into and out of the category of matroids from the categories of geometric lattices, vector spaces, and graphs. We will need the following general notions.

\begin{definition}\label{def:nearlyfull}
  A functor $F \colon \cat{C}\to\cat{D}$ is \emph{nearly full} when any morphism $g$ in $\cat{D}$ between objects in the image of $F$ is of the form $g=F(f)$ for a morphism $f$ in $\cat{C}$.
\end{definition}

A full functor is nearly full, but the converse is not true in general: the functions $\cat{C}(A,B)\to\cat{D}(FA,FB)$ need not be  surjective for all objects $A$ and $B$ in $\cat{C}$. 

For the next notion, recall that the category of left actions on sets of a monoid $M$ is monoidal~\cite[VII.4]{maclane:categories}, so one can consider categories enriched in it. Any locally small monoidal category is an example via scalar multiplication~\cite{heunen:semimoduleenrichment,abramsky2005abstract}.

\begin{definition}\label{def:nearlyfaithful}
  Let $M$ be a monoid, and $\cat{C}$ a category enriched in left $M$-actions.
  A functor $F \colon \cat{C} \to \cat{D}$ is \emph{nearly faithful} when $F(f)=F(g)$ implies $m_1\cdot f=m_2\cdot g$ for some $m_1,m_2 \in M$.
\end{definition}

Any faithful functor is nearly faithful, but the converse it not true in general. Intuitively, a nearly faithful functor is `faithful up to a scalar in $M$'.

\subsection{Geometric lattices}\label{sec:functors:lattices}

We start with a functor from the category of matroids to the following category of geometric lattices, extending Example~\ref{ex:lattice}.

\begin{definition}
  Write $\GLat$ for the category with geometric lattices as objects and as morphisms functions that preserve joins and that map every atom to either an atom or to the least element. Isomorphic lattices are identified.
\end{definition}

Note that morphisms in $\GLat$ are completely determined by their action on atoms, as in geometric lattices all elements are joins of atoms.

\begin{proposition}\label{prop:L}
  There is a functor $L \colon \Matr \to \GLat$ that sends a matroid $M$ to its lattice $L(M)$ of flats ordered by inclusion, and sends a strong map $f \colon M \to N$ to the function $L(f)\colon L(M)\to L(N)$ given by $X \mapsto \clos(f(X))$.
\end{proposition}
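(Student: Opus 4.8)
The plan is to verify the four conditions for $L$ to be a functor: that $L(M)$ is a well-defined object of $\GLat$, that $L(f)$ is a well-defined morphism of $\GLat$, and that $L$ preserves identities and composition. The first three are routine, and the real content lies in composition.

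For the object part, recall from the discussion preceding Lemma~\ref{lem:latticemaps} that the flats of a matroid ordered by inclusion form a geometric lattice~\cite[Theorem~1.7.5]{oxley:matroids}, so $L(M)$ is an object of $\GLat$. For the morphism part, I would observe that the atoms of $L(M)$ are exactly the height-1 flats and the least element is the height-0 flat; thus the requirement that $L(f)$ preserve joins and send atoms to atoms or to the least element is precisely condition (c) of Lemma~\ref{lem:latticemaps}, which holds because $f$ is strong. Hence $L(f)$ is a morphism of $\GLat$. Preservation of identities is immediate: for a flat $X$ we have $L(\id{M})(X)=\clos(X)=X$, since flats equal their closure.

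The main work, and the main obstacle, is preservation of composition. Given strong maps $f\colon M\to N$ and $g\colon N\to P$ and a flat $X$ of $M$, I must show
\[
  \clos\big(g(f(X))\big)=\clos\big(g(\clos(f(X)))\big),
\]
since the left-hand side is $L(g\circ f)(X)$ and the right-hand side is $(L(g)\circ L(f))(X)$. The inclusion $\subseteq$ is routine, following from $f(X)\subseteq\clos(f(X))$ and monotonicity of $\clos$ after $g$. For the reverse inclusion I would exploit that $g$ is strong: writing $G=\clos(g(f(X)))$ for the target flat, its inverse image $g^{-1}(G)$ is a flat of $N$, and it contains $f(X)$ because $g(f(X))\subseteq G$; hence it contains $\clos(f(X))$, the smallest flat containing $f(X)$. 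Therefore $g(\clos(f(X)))\subseteq G$, and applying $\clos$ together with idempotency of closure yields $\clos(g(\clos(f(X))))\subseteq G$. This establishes the displayed identity and hence functoriality. The crux is thus the interaction between closure and strong maps, which rests exactly on the defining property that inverse images of flats are flats.
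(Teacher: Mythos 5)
Your proof is correct and follows essentially the same route as the paper: well-definedness of $L(f)$ via Lemma~\ref{lem:latticemaps}, identities via $\clos(X)=X$ for flats, and composition via the identity $\clos(g(f(X)))=\clos(g(\clos(f(X))))$, which the paper also derives from $g$ being strong. You simply spell out the key step (that $g^{-1}(\clos(g(f(X))))$ is a flat containing $f(X)$ and hence its closure) in more detail than the paper, which merely asserts it.
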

\begin{proof}
  Lemma \ref{lem:latticemaps} guarantees that $L(f)$ is a well-defined morphism in $\GLat$. 
  If $X \in L(M)$ then $X=\clos(X)$, so $L$ preserves identities. 
  Finally, to see that $L$ preserves composition, let $f \colon M \to N$ and $g \colon N \to P$ be strong maps.
  If $X \in L(M)$ then $\clos(f(M)) \in L(N)$, so, because $g$ is strong, $\clos(g(f(X)))=\clos(g(\clos(f(X))))$. So $L(g \circ f) = L(g)\circ L(f)$.
  \qed
\end{proof}

The functor $L$ is surjective on objects, and injective on the objects of $\SMatr$, but not injective on objects in general.

\begin{lemma}\label{lem:Lnotfaithful}
  The functor $L$ is not faithful.
\end{lemma}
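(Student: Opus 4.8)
The plan is to exhibit two distinct parallel strong maps that $L$ sends to the same morphism of geometric lattices. The mechanism is that $L(f)$ only records the closures $\clos(f(X))$, and closure collapses parallel elements; so any two strong maps differing only by a swap of parallel targets will become equal under $L$. Concretely, I would choose the codomain $N$ to be a matroid possessing two parallel elements $a, b$, i.e.\ distinct nonloops lying in a common rank-1 flat, so that $\clos(\{a\}) = \clos(\{b\})$.

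For the domain I would take the simplest possible source, the free matroid $M = F(\{x\})$ on a single element, whose lattice $L(M)$ is the two-element chain $\emptyset < \{x\}$. Define $f \colon M \to N$ by $x \mapsto a$ and $g \colon M \to N$ by $x \mapsto b$. The first step is to check that both $f$ and $g$ are strong maps: the inverse image of each flat of $N$ is $\emptyset$ or $\{x\}$, both of which are flats of $M$, so the strong-map condition of Definition~\ref{def:strongmap} holds.

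The second step is to compute the images under $L$. On the unique atom $\{x\}$ of $L(M)$ we have $L(f)(\{x\}) = \clos(f(\{x\})) = \clos(\{a\})$ and $L(g)(\{x\}) = \clos(\{b\})$, and these coincide because $a$ and $b$ are parallel. Since morphisms of $\GLat$ are determined by their action on atoms, as noted after the definition of $\GLat$, agreement on $\{x\}$ (together with the forced agreement on the bottom element) gives $L(f) = L(g)$. On the other hand $f \neq g$ as functions, since $f(x) = a \neq b = g(x)$. Hence $L$ identifies two distinct morphisms and is not faithful.

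I do not anticipate a genuine obstacle here; the only point requiring care is the reduction of the equality $L(f) = L(g)$ in $\GLat$ to agreement on atoms, which is immediate from the remark that geometric-lattice morphisms are determined on atoms. One could equally phrase the counterexample more suggestively by noting that $L$ factors through the simplification of $N$, which literally erases the distinction between $a$ and $b$; but the explicit two-map computation above is the cleanest route to the statement.
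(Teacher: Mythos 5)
Your proof is correct and rests on exactly the same mechanism as the paper's: two strong maps that differ only by which of two parallel elements they hit are identified by $L$, since closure collapses a rank-1 flat to a single lattice atom. The paper's concrete instance is an endomorphism swap of two parallel elements on a three-element matroid rather than maps out of $F(\{x\})$, but this is an inessential difference.
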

\begin{proof}
  Write $M$ for the matroid on the ground set $\{0,1,2\}$ with flats $\big\{\{0\},\{0,1,2\}\big\}$. 
  The function $f \colon M \to M$ given by $f(0)=0$, $f(1)=2$, and $f(2)=1$ is a strong map.
  Now $L(f)=L(\id[M])$ both equal the identity function on $\F(M)$.
  \qed
\end{proof}

The functor $L$ is not full. To see this, consider the unique matroid on ground set $\emptyset$. Any matroid $M$ allows a unique morphism $L(M) \to L(\emptyset)$, but clearly there is no strong map $M \to \emptyset$ for nonempty $M$. This is the only obstruction to fullness in the following sense.

\begin{proposition}\label{prop:Lfull}
  The functor $L$ is nearly full; its analog defined on $\Matrp$ is full.
\end{proposition}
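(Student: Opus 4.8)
The plan is to reverse the construction of $L(f)$: given a $\GLat$-morphism $g \colon L(M) \to L(N)$ between matroid lattices, I would build an underlying function $f \colon |M| \to |N|$ whose induced lattice map recovers $g$. Since morphisms in $\GLat$ are determined by their values on atoms, and the atoms of $L(M)$ are exactly the rank-1 flats of $M$, the first step is to fix, for each nonloop $x \in |M|$, the unique rank-1 flat $a_x$ containing it, and to use $g(a_x) \in L(N)$ to decide where to send $x$. If $g(a_x)$ is an atom (a rank-1 flat of $N$), set $f(x)$ to be a chosen nonloop of that flat; if $g(a_x)$ is the bottom element, send $x$ to a loop of $N$. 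Every loop of $M$ is likewise sent to a loop of $N$. In all cases this gives $\clos(f(x)) = g(a_x)$.

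Next I would verify that $f$ is strong via the inverse-image condition, and this is where the main work lies. For a flat $G$ of $N$, the choice above gives, for a nonloop $x$, that $f(x) \in G$ iff $\clos(f(x)) = g(a_x) \subseteq G$, while $f(x) \in G$ always holds when $x$ is a loop (loops lie in every flat). Hence $f^{-1}(G)$ is the union of all atoms $a$ with $g(a) \le G$, together with the loops. The key step, and the main obstacle, is to show this union is already a flat rather than merely a union of points. Setting $F = \bigvee\{a : g(a) \le G\}$ in $L(M)$, join-preservation of $g$ yields $g(F) \le G$; since $g$ is therefore monotone, every atom $a' \le F$ satisfies $g(a') \le g(F) \le G$. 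As a flat equals the union of the rank-1 flats it contains, $F$ coincides set-theoretically with $\bigcup\{a : g(a) \le G\}$, so $f^{-1}(G) = F \in L(M)$ is a flat and $f$ is strong. A short check on atoms, using that both $g$ and $L(f)$ preserve joins (Lemma~\ref{lem:latticemaps}) and that every element of a geometric lattice is a join of atoms, then gives $L(f) = g$.

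This construction needs a loop of $N$ precisely when $M$ has a loop or some atom of $L(M)$ is sent to the bottom, and that caveat is the source of the distinction between the two claims. In $\Matrp$ the distinguished point $\bullet$ is always such a loop, so sending every relevant element (including the point of $M$) to $\bullet$ produces a \emph{pointed} strong map $f$; this yields fullness of the restriction, since every $\GLat$-morphism $L(M) \to L(N)$ is then realized. In the unpointed case $N$ may have no loop at all, so I would instead replace $N$ by a matroid carrying one extra loop, whose lattice of flats is (isomorphic to) $L(N)$, and run the same argument there. This produces the required $f$ between objects in the image of $L$, which is exactly near fullness in the sense of Definition~\ref{def:nearlyfull}, and it isolates the empty/loopless-target phenomenon noted before the statement as the only obstruction to genuine fullness of $L$ on $\Matr$.
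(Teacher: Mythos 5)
Your construction of $f$ --- sending every element of a rank-1 flat $X$ of $M$ to a chosen nonloop of $g(X)$ when $g(X)$ is an atom and to a loop when $g(X)$ is the bottom, handling the pointed case first and then reducing the general case to it by adjoining a loop to the target (using that this does not change the lattice of flats) --- is exactly the paper's argument. The only difference is that you verify strongness explicitly via inverse images of flats, identifying $f^{-1}(G)$ with the join $\bigvee\{a : g(a)\le G\}$, where the paper simply appeals to Lemma~\ref{lem:latticemaps}; your verification is correct.
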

\begin{proof}
  Let $M$ and $N$ be matroids, and let $g \colon L(M)\to L(N)$ be a morphism in $\GLat$. 
  First assume that $M$ and $N$ are pointed.
  Construct a function $f \colon M \to N$ as follows.
  For every rank-1 flat $X$ in $M$ for which $g(X)$ is the least element of $L(N)$, define $f(x)=\bullet$ for all $x \in X$.
  For every rank-1 flat $X$ in $M$ for which the flat $g(X)$ in $N$ has rank 1, let $f$ map all $x \in X$ that have not yet been accounted for to an element of $g(X)$ which is not a loop.
  Lemma~\ref{lem:latticemaps} shows that $f$ is a strong map, and by construction $L(f)=g$.
  Hence $L \colon \Matrp \to \GLat$ is full.

  Finally, observe that in general $L(M) \simeq L(M\p)$.
  It follows from the argument above that $L \colon \Matr \to \GLat$ is nearly full.
  \qed
\end{proof}

We will call a morphism $f$ is \emph{lattice-preserving} when $L(f) = \id{}$.
As promised in Section~\ref{sec:category}, we can now prove that matroid quotients are precisely categorical quotients in $\GLat$ via the functor $L$.

\begin{lemma}\label{lem:epimorphisms:lattices}
  Epimorphisms in $\cat{GLat}$ are precisely the surjective morphisms.
\end{lemma}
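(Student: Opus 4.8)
The plan is to prove both inclusions between the class of surjective morphisms and the class of epimorphisms in $\GLat$. The easy direction is that every surjection is an epimorphism: if $g \colon L \to L'$ is surjective in $\GLat$ and $h_1, h_2 \colon L' \to L''$ satisfy $h_1 \circ g = h_2 \circ g$, then since every element of $L'$ is hit by $g$, we get $h_1 = h_2$ pointwise, so $g$ is epic. This uses nothing beyond surjectivity and requires no special property of geometric lattices.

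The substantive direction is that every epimorphism is surjective. First I would exploit the structural fact, emphasised just before the statement, that a morphism in $\GLat$ is completely determined by its action on atoms, since every element is a join of atoms. So it suffices to show that an epimorphism $g \colon L \to L'$ hits every atom of $L'$ (the least element is always the image of the least element, since join-preserving maps preserve the empty join). The strategy is the standard cokernel-pair style argument: assuming $g$ is not surjective on atoms, I would construct two distinct morphisms $h_1, h_2 \colon L' \to L''$ that agree on the image of $g$ but differ on some atom not in the image, contradicting that $g$ is epic.

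Concretely, suppose some atom $a$ of $L'$ is not in the image of $g$. I would build $L''$ together with the two maps so that they coincide on all atoms in $g(L)$ but send $a$ to different places — for instance by taking $L''$ to be a geometric lattice in which $a$ can be mapped either to an atom or to the least element while everything in the image of $g$ goes to a fixed target. The map of a join-preserving function into $\GLat$ is free to be specified independently on atoms (subject only to each atom landing on an atom or the bottom), so two such specifications differing only at $a$ both yield legitimate $\GLat$-morphisms, and they agree after precomposition with $g$. I expect the main obstacle to be verifying that the two competing functions genuinely extend to well-defined $\GLat$-morphisms: one must check that the extension by joins is single-valued and order/join-preserving, and in particular that mapping an uncovered atom to the least element does not clash with the images forced on larger elements that are joins involving both covered and uncovered atoms. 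The cleanest route is probably to take $L''$ to be a small explicit geometric lattice (e.g. a rank-two lattice, or a Boolean lattice on suitably few atoms) where this consistency is immediate, rather than arguing abstractly; once the two maps are exhibited, the contradiction with epicness is automatic.
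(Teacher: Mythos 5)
Your easy direction is fine, and your reduction of surjectivity to hitting every atom of $L'$ is correct (every element of $L'$ is a join of atoms and $g$ preserves joins, including the empty one). But the hard direction has a genuine gap: the two morphisms $h_1,h_2\colon L'\to L''$ that are supposed to witness non-epicness are never actually constructed, and the principle you lean on to produce them --- that a $\GLat$-morphism ``is free to be specified independently on atoms (subject only to each atom landing on an atom or the bottom)'' --- is false. For instance, if $L'$ is the rank-two geometric lattice with three atoms $a,b,c$ (the flats of $U_{2,3}$), the assignment $a\mapsto 0$, $b\mapsto b$, $c\mapsto c$ extends to no join-preserving map, since it would force $h(1)=h(a\vee b)=b$ and $h(1)=h(b\vee c)=1$ simultaneously. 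So ``keep everything in the image of $g$ fixed and move only the uncovered atom $a$'' does not work as stated; the consistency problem you flag in your last sentence is exactly where the content of the proof lies, and deferring it to ``a small explicit lattice where this is immediate'' leaves the statement unproved.

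The gap is fillable along the lines you sketch. Take $L''$ to be the two-element lattice $\{0<1\}$ (the lattice of flats of the one-element free matroid). For any $F\in L'$ the map $h_F$ sending $y$ to $0$ if $y\le F$ and to $1$ otherwise preserves joins (because $x\vee y\le F$ holds iff both $x\le F$ and $y\le F$) and sends every atom to the atom or to $0$, so it is a $\GLat$-morphism. Now put $h_1=h_a$ and $h_2=h_0$: they differ at $a$, yet agree on the image of $g$, since $g(x)\le a$ forces $g(x)\in\{0,a\}$ and $a$ is not in the image. This completes your argument, and it is genuinely different from the paper's, which instead deduces the claim from the behaviour of the functor $L\colon\Matr\to\GLat$ (Proposition~\ref{prop:Lfull}) together with the characterizations of epimorphisms and isomorphisms in $\Matr$ (Corollary~\ref{cor:monomorphisms} and Lemma~\ref{lem:isomorphisms}); your route, once the maps $h_F$ are supplied, is more elementary and stays entirely inside $\GLat$.
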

\begin{proof}
  That surjective maps are epimorphic is clear. The other direction follows by combining Lemma~\ref{lem:isomorphisms}, Corollary~\ref{cor:monomorphisms}, and Proposition~\ref{prop:Lfull}.
  \qed
\end{proof}

\begin{theorem}\label{thm:quotients}
  The functor $L \colon \Matrp \to \GLat$ maps matroid quotients to categorical quotients.
  The restricted functor from the subcategory of pointed matroid quotients to the subcategory of categorical quotients is not full but nearly full.
\end{theorem}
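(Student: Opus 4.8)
The plan is to treat the three assertions separately, in each case passing between matroid quotients (which are exactly the bijective strong maps by Lemma~\ref{lem:quotients}) and surjections in $\GLat$ (which are exactly the epimorphisms, hence the categorical quotients, by Lemma~\ref{lem:epimorphisms:lattices}). For the first assertion I would start from a pointed matroid quotient $f \colon M \to N$, \ie a bijective strong map, and show that $L(f)$ is surjective. Given a flat $G$ of $N$, its inverse image $f^{-1}(G)$ is a flat of $M$ because $f$ is strong, and $L(f)(f^{-1}(G)) = \clos(f(f^{-1}(G))) = \clos(G) = G$, using that $f$ is a bijection and $G$ is closed. So $L(f)$ is surjective, hence a categorical quotient by Lemma~\ref{lem:epimorphisms:lattices}. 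This also shows $L$ restricts to a functor between the two subcategories; functoriality is immediate since bijective strong maps and surjections are each closed under composition and contain identities.

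For the failure of fullness I would exhibit fixed matroids $M,N$ together with a surjection $g \colon L(M) \to L(N)$ in $\GLat$ that equals $L(f)$ for no matroid quotient $f \colon M \to N$, via a cardinality obstruction. Take $M$ with flats $\big\{\{\bullet\},\{\bullet,a\}\big\}$ and $N$ the one-loop matroid with flats $\big\{\{\bullet\}\big\}$. The collapse $g \colon L(M) \to L(N)$ sends the unique atom of $L(M)$ to the least element, so it is a morphism of $\GLat$ and is surjective. Any matroid quotient $M \to N$ would be a bijection of ground sets, which is impossible as $\card M = 2 \ne 1 = \card N$. Thus $g$ lies outside every image and the restricted functor is not full.

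The substance of the theorem is near fullness, where I would exploit the freedom in Definition~\ref{def:nearlyfull} to choose the source matroids rather than being handed them. Given a surjection $g \colon L_1 \to L_2$ in $\GLat$, realize $L_1$ by the simple matroid $M'$ with one nonloop per atom plus the point $\bullet$, so $L(M') = L_1$. Each atom of $L_1$ is sent by $g$ to an atom of $L_2$ or to the least element, and surjectivity forces every atom of $L_2$ to be hit. I would then build $N'$ realizing $L_2$ by giving each atom $b$ of $L_2$ a parallel class whose size equals the number of atoms of $L_1$ mapping to $b$, and by adjoining one loop for $\bullet$ together with one extra loop for each atom of $L_1$ that $g$ sends to the least element; since parallel elements and loops leave the lattice unchanged, $L(N') = L_2$. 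Defining $f \colon M' \to N'$ to send nonloops atom-wise according to $g$, and to send $\bullet$ together with the nonloops of atoms killed by $g$ bijectively onto the loops of $N'$, yields a bijection, and a direct count gives $\card M' = \card N'$. By construction $L(f)$ agrees with $g$ on atoms, hence $L(f)=g$ by join-preservation, and $f$ is strong by Lemma~\ref{lem:latticemaps}; so $f$ is a bijective strong map, \ie a matroid quotient, with $L(f)=g$.

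The step I expect to be the main obstacle is exactly this last construction, in particular arranging $\card M' = \card N'$ so that a bijection can exist: atoms of $L_1$ collapsed together by $g$ must be absorbed by enlarging the corresponding parallel class of $N'$, while atoms sent to the least element must be absorbed by padding $N'$ with loops. Checking that the resulting $f$ is simultaneously bijective, strong, and satisfies $L(f)=g$ is the crux; the remaining bookkeeping parallels the construction already used in Proposition~\ref{prop:Lfull}.
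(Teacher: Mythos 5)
Your proposal is correct and follows essentially the same route as the paper: quotients are bijective strong maps (Lemma~\ref{lem:quotients}), hence surjective, hence sent to epimorphisms of $\GLat$ (Lemma~\ref{lem:epimorphisms:lattices}); non-fullness via a cardinality obstruction; and near-fullness by realizing the two lattices with freely chosen matroids admitting a bijective strong map between them. Your $N'$, built from parallel classes sized by the fibres of $g$ plus loops absorbing the killed atoms, is isomorphic to the paper's $N'$, which keeps the ground set of $M'$ and takes $g$-preimages of flats as the new flat structure, so the constructions coincide up to relabelling.
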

\begin{proof}
  By Lemma~\ref{lem:quotients}, matroid quotients $f$ are surjective functions. Hence $L(f)$ must also be surjective, which is a categorical quotient by Lemma~\ref{lem:epimorphisms:lattices}.

  Not every surjective morphism $g \colon L(M)\to L(N)$ in $\GLat$ has a matroid quotient $f$ with $L(f)=g$:
  if $N$ is the matroid with $1+\card |M|$ loops and no other elements, then all morphisms into $L(N)$ are surjective, but there are no matroid quotients $M\to N$. Hence $L$ is not full.

  However, given a surjective map $g \colon L(M) \to L(N)$ in $\GLat$, take $M'$ to have $L(M')=L(M)$ with ground set the elements of $L(M)$ with $r(x)\le1$ (where the bottom element is a loop), and define $N'$ by $L(N')=L(N)$ and by setting $F \subseteq |N'|$ to be a flat in $N'$ when $F=g^{-1}(F)\cap\{x:x\in L(M),r(x)\le1\}$. Then there is clearly a matroid quotient $f \colon M' \to N'$ with $L(f)=g$, making $L$ nearly full.
  \qed
\end{proof}

There is also a functor in the opposite direction.

\begin{proposition}\label{prop:S}
  There is an embedding $S\colon \GLat\to\Matrp$ that:
  \begin{itemize}
  \item acts on objects $G$ by letting $|S(G)|$ be the set of atoms of $G$ together with a loop $\bullet$ (so that the flats of $S(G)$ form a lattice isomorphic to $G$);
  \item acts on morphisms $f \colon G \to H$ as follows: if $f$ sends an atom of $G$ to the least element of $H$, then $S(f)$ sends the corresponding element of $S(G)$ to $\bullet$; if $f$ sends an atom $A$ of $G$ to an atom $B$ of $H$, then $S(f)$ also sends $A$ to $B$.
  \end{itemize}
\end{proposition}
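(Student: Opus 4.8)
The plan is to verify in turn that $S$ is well defined on objects, well defined on morphisms, functorial, and an embedding. For objects the essential content is that $S(G)$ really is a pointed matroid whose flat lattice is isomorphic to $G$. This is precisely the correspondence between geometric lattices and simple matroids recalled in Section~\ref{sec:category} (\cite[Theorem~1.7.5]{oxley:matroids}): taking the atoms of $G$ as ground set and declaring a subset to be a flat exactly when it is the set of atoms below some $x\in G$ yields a simple matroid whose lattice of flats is isomorphic to $G$ via $x\mapsto\{A\text{ atom}: A\le x\}$. Adjoining the distinguished loop $\bullet$ (which lies in every flat) pointifies this matroid without disturbing the flat lattice, so $S(G)\in\Matrp$ and $L(S(G))\cong G$ as promised. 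I would fix this isomorphism explicitly, as it is used throughout.

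For morphisms I would show that $S(f)$ is a well-defined pointed strong map using the characterisation of Lemma~\ref{lem:latticemaps}(c). By construction $S(f)$ is a genuine function $|S(G)|\to|S(H)|$ sending $\bullet$ to $\bullet$, so it is pointed; to see it is strong I would examine the candidate lattice map $X\mapsto\clos(S(f)(X))$. For a flat $F$ of $S(G)$ corresponding to $x\in G$, the set $F$ consists of $\bullet$ together with the atoms below $x$, so $S(f)(F)$ is $\bullet$ together with $\{f(A): A\le x,\ A\text{ atom}\}$. Since $f$ preserves joins and, by atomisticity, $x=\bigvee\{A: A\le x,\ A\text{ atom}\}$, taking closure returns the flat corresponding to $f(x)$. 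Hence under the isomorphism $L(S(G))\cong G$ the map $X\mapsto\clos(S(f)(X))$ is exactly $f$, which preserves joins and sends atoms to atoms or to the least element by hypothesis. Lemma~\ref{lem:latticemaps}(c) then certifies that $S(f)$ is strong.

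Functoriality is then immediate: $S(\id{G})$ induces the identity on $L(S(G))$ and fixes $\bullet$, hence is the identity, and since $L(S(f))$ coincides with $f$ under the fixed isomorphisms the relation $L(S(g\circ f))=L(S(g))\circ L(S(f))$ matches $g\circ f$, while on elements $S(g\circ f)$ and $S(g)\circ S(f)$ agree directly by the explicit definition on atoms. Finally I would argue $S$ is an embedding. It is faithful because morphisms in $\GLat$ are determined by their action on atoms (as noted after the definition of $\GLat$) and $S(f)$ records this action on the corresponding nonloop elements, so $f\ne f'$ forces $S(f)\ne S(f')$. It is injective on objects because $G$ is recovered from $S(G)$, its atoms being the nonloop elements and its order being read off from the flats, so $S(G)=S(G')$ forces $G=G'$.

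The step I expect to be most delicate is the closure computation showing that $X\mapsto\clos(S(f)(X))$ coincides with $f$: one must confirm that the closure of the image of a flat is the flat generated by the join of the images of its atoms, which is exactly where join-preservation of $f$ and the atomistic nature of geometric lattices are used. Everything else is either the cited structural fact of Section~\ref{sec:category} or a routine check against the definitions.
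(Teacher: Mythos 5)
Your proof is correct and follows essentially the same route as the paper's: the paper justifies that $S(f)$ is a legitimate strong map by citing Proposition~\ref{prop:Lfull} (whose construction, restricted to simple pointed matroids, is exactly $S(f)$), whereas you inline that argument by computing $L(S(f))=f$ under the isomorphism $L(S(G))\cong G$ and invoking Lemma~\ref{lem:latticemaps}(c); functoriality, faithfulness via the determination of $\GLat$-morphisms by their action on atoms, and injectivity on objects are argued identically. The only divergence is that the paper's proof also records that $S$ is \emph{full} (again via Lemma~\ref{lem:latticemaps}), which you omit; this is not required if ``embedding'' means faithful and injective on objects as in \cite{adamekherrlichstrecker:joyofcats}, but it is worth adding a line for it since the paper evidently intends the stronger reading.
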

\begin{proof}
  Proposition~\ref{prop:Lfull} guarantees that $S(f)$ is a legitimate morphism in $\Matrp$.
  Clearly $S$ preserves identities. Seeing that it also preserves composition comes down to noticing that for simple matroids $M$, a strong map $M \to N$ is completely defined by its action on $L(M)$, and that adding a loop does not change this.

  Faithfulness is a direct consequence of the fact that there is a one-to-one correspondence between atoms of the lattice and nonloop elements of the matroid's ground set; different mappings between atoms therefore give rise to differents functions between ground sets. Fullness follows from Lemma~\ref{lem:latticemaps}. Finally, $S$ is injective on objects because there is only one lattice of flats for every matroid.
  \qed
\end{proof}

\begin{theorem}\label{thm:LSreflection}
  There is a reflection $L \dashv S$.
\end{theorem}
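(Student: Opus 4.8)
The plan is to exhibit an adjunction by constructing a unit and verifying the universal property, leveraging the functors $L\colon \Matrp \to \GLat$ and $S\colon \GLat \to \Matrp$ already established. Recall that $S(G)$ has as ground set the atoms of $G$ together with a loop $\bullet$, with flat lattice isomorphic to $G$; in particular $L(S(G)) \cong G$, so the composite $L \circ S$ is naturally isomorphic to the identity on $\GLat$. This strongly suggests $S$ is a right adjoint to $L$ with the counit $\varepsilon\colon L \circ S \Rightarrow \id[\GLat]$ being this isomorphism.

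First I would define the unit $\eta_M \colon M \to S(L(M))$ for a pointed matroid $M$. Since $S(L(M))$ is the simplification of $M$ (its ground set is the rank-1 flats of $M$ plus a loop, and its lattice of flats is $L(M)$), the natural map sends each nonloop element of $M$ to the atom of $L(M)$ that is its rank-1 closure, and sends every loop to $\bullet$. By Lemma~\ref{lem:latticemaps}(c), this is a strong map because $L(\eta_M) = \id[L(M)]$ preserves joins and sends atoms to atoms. I would check naturality of $\eta$ in $M$ by a routine diagram chase, using that $L$ preserves composition (Proposition~\ref{prop:L}).

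The universal property then amounts to showing that for every pointed matroid $M$, every geometric lattice $G$, and every morphism $g \colon L(M) \to G$ in $\GLat$, there is a unique pointed strong map $\hat g \colon M \to S(G)$ with $L(\hat g) = g$ (modulo the isomorphism $L(S(G)) \cong G$) and $S(g) \circ \eta_M = \hat g$ after transporting along the counit. Existence of a strong map realizing $g$ on lattices is exactly the content of the fullness half of Proposition~\ref{prop:Lfull}: since $M$ and $S(G)$ are pointed and $g$ is a $\GLat$-morphism $L(M) \to L(S(G))$, there is a strong map $\hat g$ with $L(\hat g) = g$. For uniqueness I would use that $S(G)$ is simple, so by the observation in the proof of Proposition~\ref{prop:S} a strong map into a simple matroid is completely determined by its action on the lattice of flats; hence $\hat g$ is the unique map with $L(\hat g) = g$, and it automatically factors through the unit.

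The main obstacle is the uniqueness of the transpose. Because $L$ is not faithful in general (Lemma~\ref{lem:Lnotfaithful}), one cannot conclude uniqueness of $\hat g$ from $L(\hat g) = g$ for arbitrary codomains; the argument genuinely relies on $S(G)$ being simple, where distinct nonloops lie in distinct rank-1 flats, so that the lattice action pins down the map on elements. I would make this dependence explicit, so that the verification of the triangle identities (or equivalently the bijection $\GLat(L(M), G) \cong \Matrp(M, S(G))$ natural in both arguments) is clean. The remaining naturality checks are routine functoriality bookkeeping.
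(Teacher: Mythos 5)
Your proof is correct and follows essentially the same route as the paper: both arguments rest on the observation that a pointed strong map into a simple pointed matroid is completely determined by its action on the lattice of flats (giving uniqueness) together with fullness of $L$ on $\Matrp$ (giving existence) and the isomorphism $L(S(G))\cong G$. The paper packages this as a natural bijection $\Matrp(M,S(G))\cong\GLat(L(M),G)$ rather than via an explicit unit, but the content is identical.
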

\begin{proof}
  When the codomain of a strong map $f$ is a simple matroid with a single added loop and the action of $f$ on rank-1 flats is known, then $f$ is known; there is only one element in each rank-1 flat and the rank-0 flat that each element of the domain can map to. Hence $\Matrp(M,N)\simeq\GLat(L(M),L(N))$ for pointed matroids $M$ and $N$ if $N$ is simple. Moreover, if $N$ is simple then it is in the image of $S$ (up to isomorphism), and by the definition of the action of $S$ on objects we have $G \simeq L(S(G))$ for all geometric lattices $G$. Therefore $\Matrp(M,S(G))\simeq\GLat(L(M),N)$, and this bijection is natural.
  Finally, note that the counit $L(S(G)) \to G$ is an isomorphism, making the adjunction into a reflection.
  \qed
\end{proof}

Any adjunction gives rise to a monad, and in this case we recover the following standard matroid operation~\cite[Section~1.7]{oxley:matroids}.

\begin{definition}\label{def:simplification}
  A \emph{simplification} $\si(M)$ of a matroid $M$ is a matroid obtained by deleting all the loops and all but one element in each rank-1 flat.
\end{definition}

\begin{theorem}\label{thm:simplification}
  Simplification is a monad $\sip= S \circ L \colon \Matrp \to \Matrp$.
  Its category of Eilenberg-Moore algebras is (isomorphic to) $\SMatrp$.
\end{theorem}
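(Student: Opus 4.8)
The plan is to obtain the monad structure formally from the reflection and then identify the Eilenberg--Moore algebras using idempotency. First, since Theorem~\ref{thm:LSreflection} establishes an adjunction $L \dashv S$ with unit $\eta \colon \id{\Matrp} \Rightarrow S\circ L$ and counit $\varepsilon \colon L \circ S \Rightarrow \id{\GLat}$, the composite $\sip = S \circ L$ is automatically a monad on $\Matrp$: its unit is $\eta$ and its multiplication is $\mu = S\varepsilon L \colon \sip \circ \sip \Rightarrow \sip$. This is the standard monad induced by an adjunction, so the monad laws need no separate verification. Unwinding the definitions of $S$ and $L$ confirms that $\sip$ carries $M$ to a simplification of $M$ with an added loop, matching Definition~\ref{def:simplification}.

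The key observation is that this adjunction is a \emph{reflection}: by Proposition~\ref{prop:S} the functor $S$ is an embedding, hence fully faithful, so the counit $\varepsilon$ is an isomorphism (which is exactly what Theorem~\ref{thm:LSreflection} records). Consequently the multiplication $\mu = S\varepsilon L$ is an isomorphism as well, so $\sip$ is an \emph{idempotent} monad. For any idempotent monad $(T,\eta,\mu)$, an object $X$ admits a necessarily unique $T$-algebra structure precisely when the component $\eta_X$ is an isomorphism, in which case the structure map is forced to be $\eta_X^{-1}$; moreover every morphism between two such objects is automatically an algebra morphism. Hence the Eilenberg--Moore category $\Matrp^{\sip}$ is isomorphic to the full subcategory of $\Matrp$ on those $X$ for which $\eta_X$ is invertible.

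It then remains to identify these fixed objects. The unit $\eta_X \colon X \to \sip(X)$ sends each element of $X$ to its rank-1 flat and each loop to $\bullet$, so it is always surjective, and it is injective exactly when $X$ has no loops beyond $\bullet$ and no parallel elements, that is, exactly when $X$ is simple. When $X$ is simple pointed, $\eta_X$ is a bijection whose direct image of any flat is again a flat, hence an isomorphism by Lemma~\ref{lem:isomorphisms}; when $X$ is not simple, $\eta_X$ fails to be injective and is therefore not invertible by Lemma~\ref{lem:isomorphisms}. Thus $\eta_X$ is an isomorphism if and only if $X\in\SMatrp$, and since $\SMatrp$ is a \emph{full} subcategory of $\Matrp$, the isomorphism $\Matrp^{\sip}\cong\SMatrp$ follows.

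I expect the main obstacle to lie in the identification of the algebras rather than in the formal monad structure: one must justify the passage from ``$T$-algebra'' to ``object with invertible unit'' via the idempotent-monad lemma, and then pin down invertibility of $\eta_X$ using Lemma~\ref{lem:isomorphisms}, taking care that bijectivity alone is not sufficient in $\Matrp$ and that the inverse must also be strong.
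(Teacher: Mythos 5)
Your proof is correct, and it reaches the same conclusion as the paper but by a more systematic route. The paper's own argument is a direct, terse analysis of the unit law: since the unit $\eta_M \colon M \to S(L(M))$ sends each element to its closure, the algebra equation $a \circ \eta_M = \id{M}$ forces $\eta_M$ to be (split) injective, hence $\clos(x)=x$ for all $x$, hence $M$ is simple and the structure map is forced to be the identity; fullness of $\SMatrp$ then finishes the job. You instead first observe that the reflection $L \dashv S$ makes $\sip$ an idempotent monad and invoke the general fact that the Eilenberg--Moore category of an idempotent monad is the full subcategory of objects with invertible unit, and only then compute when $\eta_X$ is invertible. What your route buys is completeness in both directions: the paper only explicitly argues that an algebra must be simple, leaving implicit that every simple pointed matroid actually carries an algebra structure (i.e.\ that $\eta_X$ is genuinely an isomorphism, not merely a split monomorphism, and that the second algebra axiom holds), whereas your appeal to the idempotent-monad lemma plus the explicit check via Lemma~\ref{lem:isomorphisms} that direct images of flats are flats settles this. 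The paper's argument is shorter and avoids the idempotency machinery, but yours is the more airtight version of the same underlying idea.
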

\begin{proof}
  By definition $S(L(M))$ is a simplification of $M$.
  Because the monad unit $M \to S(L(M))$ sends each element to its closure, the unit law for Eilenberg-Moore algebras $\si(M) \to M$ implies that $\clos(x)=\{x,\bullet\}$ for nonloop $x\in M$; that is, $M$ is simple, and the structure map has to be the map sending $\{x,\bullet\}\mapsto x$.
  Hence morphisms of Eilenberg-Moore algebras just come down to strong maps.
  \qed
\end{proof}

We now briefly turn our attention to factorization systems, which we discussed in the previous section. Clearly factorization systems in $\Matrp$ are strongly linked to decomposing morphisms in $\GLat$.
Call a morphism $f \colon A \to B$ in $\GLat$ a \emph{contraction} when its restriction to the interval $[\bigvee f^{-1}(0_B),1_A]$ is an isomorphism; call $g \colon A\to B$ an \emph{embedding} when its corestriction to the interval $[0_B,g(1_A)]$ is an isomorphism. This is in line with the definitions of contraction and embedding for matroids: $L(f)$ is a contraction/embedding if $f$ is a contraction/embedding, and the converse holds for simple matroids. A matroid $M$ is a minor of a matroid $N$ only if $L(M)$ is a subobject of $L(N)$, and the converse holds for simple matroids.

\begin{lemma}
  The category $\GLat$ has a weak factorization system where $\mathcal{L}$ consists of embeddings and $\mathcal{R}$ consists of contractions; it is not an orthogonal factorization system.
\end{lemma}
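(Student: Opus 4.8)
The plan is to verify directly the three conditions of Definition~\ref{def:factorization} for the two classes $\mathcal{L}$ (embeddings) and $\mathcal{R}$ (contractions) in $\GLat$, using the characterisations given just above the statement. The crucial observation is that embeddings and contractions in $\GLat$ were defined to be isomorphisms onto, respectively, an interval of the form $[0_B, g(1_A)]$ and an interval of the form $[\bigvee f^{-1}(0_B), 1_A]$; these are exactly the two ``halves'' into which any join-preserving atom-map can be split.

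For the factorization itself, given a morphism $f \colon A \to B$ in $\GLat$, I would set $K = \bigvee f^{-1}(0_B)$, the join of all atoms sent to the bottom, and take the intermediate object to be the interval $[K, 1_A]$, which is again a geometric lattice. The first factor $l \colon A \to [K,1_A]$ sends $x \mapsto x \vee K$; this collapses precisely the atoms below $K$ to the new bottom element and is an isomorphism on the complementary interval, so it is an embedding in the sense defined. The second factor $r \colon [K,1_A] \to B$ is the restriction of $f$, which is now injective on atoms (none are sent to $0_B$), hence an isomorphism onto the interval $[0_B, f(1_A)]$, so it is a contraction. One then checks $f = r \circ l$ on atoms, which suffices since morphisms in $\GLat$ are determined by their action on atoms. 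Both classes contain all isomorphisms trivially, since an isomorphism is simultaneously an embedding (with $g(1_A)=1_B$) and a contraction (with $f^{-1}(0_B)=\{0_A\}$).

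For the fill-in condition, I would take a commuting square as in~\eqref{eq:fillin} with $l,l' \in \mathcal{L}$ embeddings and $r,r' \in \mathcal{R}$ contractions, and construct $h$ on atoms: an atom of $I$ corresponds via the embedding $l$ to an atom of $M$ (up to the collapsed part), which is pushed through $f$ and then $l'$; because $r$ is a contraction the correspondence is unambiguous on the non-collapsed interval, and one extends by sending the remaining atoms to $0_{I'}$. Verifying that this $h$ makes both triangles commute is a diagram chase on atoms.

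The main obstacle, and the reason the system is only \emph{weak} rather than orthogonal, is the failure of \emph{uniqueness} of the fill-in. I expect the fill-in $h$ to be free to act arbitrarily on exactly those atoms of $I$ that $r$ sends into $0_N$-fibres but which $l'$ does not pin down — the atoms living in the kernel of the contraction $r$. To exhibit this concretely I would produce a small commuting square, e.g. with $A$ a rank-$2$ lattice on two atoms and $B$ the two-element lattice, where two distinct join-preserving atom-maps $h, h' \colon I \to I'$ both complete the square because each differently distributes the collapsed atoms among the atoms of $I'$ lying over $0_{N'}$. This single counterexample simultaneously confirms the existence of a fill-in and rules out orthogonality, completing the proof.
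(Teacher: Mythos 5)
Your factorization goes the wrong way round, and this is not a cosmetic issue. With the paper's definitions, your first factor $l\colon A\to[K,1_A]$, $x\mapsto x\vee K$, is a \emph{contraction} (it is an isomorphism on the interval $[\bigvee l^{-1}(0),1_A]=[K,1_A]$), while a map that is an isomorphism onto $[0_B,f(1_A)]$ is an \emph{embedding}; you have the two names swapped. So even if your construction worked, it would produce $f=(\text{embedding})\circ(\text{contraction})$, i.e.\ a factorization with a member of $\mathcal{R}$ applied first, whereas the lemma requires $\mathcal{L}=$ embeddings on the left and $\mathcal{R}=$ contractions on the right. Worse, your second factor is not an embedding: ``no atom is sent to $0_B$'' does not imply injectivity on atoms, since two distinct atoms of $[K,1_A]$ may have the same nonzero image. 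Take $A=L(U_{2,3})$ (the rank-$2$ geometric lattice on three atoms), $B$ the two-element lattice, and $f$ the morphism sending every atom to the atom of $B$: then $K=0_A$, your $l$ is the identity, and your $r$ is $f$ itself, which is not injective. In fact this $f$ admits \emph{no} factorization as a contraction followed by an embedding: the embedding would be injective and no atom of $A$ maps to $0_B$, so the contraction would satisfy $\bigvee l^{-1}(0)=0_A$ and hence be an isomorphism, forcing $f$ to be injective.

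The real content of the lemma is that the intermediate object must be \emph{larger} than the domain: the paper takes $I$ to be the lattice of a Higgs lift, into which $L(M)$ embeds and which is then contracted onto $L(N)$; this rests on Higgs's factorization theorem for strong maps (Lemma~\ref{lem:quotients} and Lemma~\ref{lem:higgs}), not on an image/coimage construction carried out inside $A$. Your fill-in and non-orthogonality arguments are built on the incorrect factorization, so they cannot be assessed as written; the idea of exhibiting two distinct fill-ins to refute orthogonality is reasonable in principle, but it would have to be carried out against the correct (Higgs lift) middle objects.
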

\begin{proof}
  Every surjective map $f \colon M\to N$ in $\Matrp$ factorizes as $f=\tau \circ r\circ l$, where $l$ is an embedding, $r$ is a contraction and $\tau$ is a lattice-preserving map. Then $L(f)=L(\tau\circ r\circ l)=L(\tau)\circ L(r)\circ L(l)=L(r)\circ L(l)$ with $L(l) \in \mathcal{L}$ and $L(r) \in \mathcal{R}$. 
  The middle object $I$ is the lattice of the $n$th Higgs lift of $N$ towards $M+F_n$ along $f_+$, where $n$ is the \emph{nullity} $\rank_M(1_M) - \rank_N(f(1_M))$ of $f$, where $F_n$ is the free matroid on $n$ elements, and where $L(f_+)$ coincides with $L(f)$ on the atoms of $L(M)$ and sends atoms of $L(F_n)$ to the bottom of $L(N)$. 
  Explicitly, $L(I)$ is the sublattice $\{X \mid X=\bigvee f^{-1}(f(X))\}\cup\{X \mid \rank(f(X))=\rank(X)\}$ of $L(M)$, the canonical embedding $L(r) \colon L(M) \to L(I)$ corestricts to the identity, and the canonical contraction $L(l) \colon L(I)\to L(N)$ acts as $L(f_+)$. 
  All of this extends to maps that are not surjective on flats~\cite{higgs1968strong}.
  For the fill-in~\eqref{eq:fillin}, define $h \colon I \to I'$ as the morphism that sends every atom $a$ of $I$ that is also in $L(M)$ to $L(f(a))$, and all other atoms to the bottom of $I'$.
  \qed
\end{proof}

The rest of this subsection shows that if $\GLat$ has an orthogonal factorization system, then it must induce a factorization system in $\Matrp$.

\begin{proposition}
  For every $N \in \Matrp$, the functor $L^N \colon \Matrp/N \to \GLat/L(N)$ between slice categories has a right adjoint $R^N$ that is full.
\end{proposition}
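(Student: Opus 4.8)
The plan is to leverage the reflection $L\dashv S$ from Theorem~\ref{thm:LSreflection} together with the standard recipe for lifting an adjunction to slice categories. Write $\eta\colon\mathrm{id}_{\Matrp}\to SL$ for the unit (the simplification map) and recall that the counit $\epsilon\colon LS\to\mathrm{id}_{\GLat}$ is an isomorphism. Given an object $a\colon A\to L(N)$ of $\GLat/L(N)$, I would define $R^N(a)$ to be the projection to $N$ of the pullback in $\Matrp$ of the cospan
\[
  N \xrightarrow{\ \eta_N\ } SL(N) \xleftarrow{\ S(a)\ } S(A).
\]
This choice is forced: transposing along $L\dashv S$, a morphism $\phi\colon L(M)\to A$ over $L(N)$ corresponds to a map $\tilde\phi\colon M\to S(A)$, and the condition $a\circ\phi=L(g)$ becomes $S(a)\circ\tilde\phi=\eta_N\circ g$ (using naturality of $\eta$ and the triangle identities), which is exactly the data of a map into the pullback lying over $g$.

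The hard part will be establishing that this particular pullback exists, since Proposition~\ref{prop:products} shows $\Matrp$ has neither products nor pullbacks in general, the obstruction being the partition axiom. I would argue existence directly through the criterion of Remark~\ref{rem:constructlimit}: first form the pullback set
\[
  P = \{(x,\alpha) \mid x\in|N|,\ \alpha\in|S(A)|,\ \eta_N(x)=S(a)(\alpha)\},
\]
then exhibit the coarsest matroid structure making both projections $\pi_N$ and $\pi_A$ strong. The key computation is that the flats generated by $\{\pi_N^{-1}(F)\}\cup\{\pi_A^{-1}(G)\}$ remain within a single copy of $A$: two elements $(x,\alpha)$ and $(x',\alpha')$ are forced apart by $\pi_A^{-1}$ of a rank-1 flat exactly when $\alpha\neq\alpha'$, whereas for $\alpha=\alpha'$ the points $x,x'$ are parallel in $N$ and hence never separated by a flat of $N$. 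Thus the rank-1 flats of $P$ biject with the atoms of $A$, each atom being realised (when $a(\alpha)=0_{L(N)}$ the loop $\bullet$ of the pointed matroid $N$ provides a point over $\alpha$, and when $a(\alpha)$ is an atom its nonloop elements do). So the generated flats form a lattice isomorphic to $A$; as $A$ is a geometric lattice the partition axiom holds automatically, and the pullback exists.

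With existence in hand, the universal property is immediate from the transposition in the first paragraph: it yields a natural bijection
\[
  \Matrp/N\big(g, R^N(a)\big) \;\cong\; \GLat/L(N)\big(L^N(g), a\big),
\]
so $L^N\dashv R^N$.

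Finally, for fullness I would show the counit of this adjunction is an isomorphism, whence $R^N$ is full (indeed full and faithful). Unwinding definitions, the component at $a$ is the morphism $L(\pi_A)\colon L(P)\to L(S(A))\cong A$ over $L(N)$; that it lies over $L(N)$ follows by applying $L$ to the pullback square and using the triangle identities together with the counit isomorphism. The computation of $L(P)$ above shows $L(\pi_A)$ is a bijection on atoms and hence a lattice isomorphism, so the counit is invertible and $R^N$ is full.
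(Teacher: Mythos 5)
Your proof is correct, but it takes a genuinely different route from the paper. The paper constructs $R^N(a)$ by hand: it writes down the matroid whose lattice of flats is $A$, whose loops are those of $N$, and whose rank-$1$ flat over an atom $\alpha$ consists of indexed copies of the elements of $N$ lying over $a(\alpha)$, and then directly verifies functoriality, the universal property (by exhibiting the unit as the lattice-preserving map that acts as $f$ on elements up to indices), and fullness. You instead recognise this object as the pullback of $\eta_N \colon N \to SL(N)$ against $S(a)$, i.e.\ you invoke the standard fact that an adjunction $L \dashv S$ induces an adjunction between slice categories whose right adjoint is ``pull back along the unit'', reducing everything to the existence of one specific class of pullbacks in $\Matrp$. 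This buys you a formally transparent universal property, naturality for free, and a stronger conclusion (the counit is an isomorphism, so $R^N$ is full \emph{and} faithful, whereas the paper only argues fullness directly); the cost is that you must establish a limit in a category that Proposition~\ref{prop:products} shows lacks pullbacks in general, and your verification there is the one place that is thinner than it should be. Specifically, the assertion that the partition axiom ``holds automatically'' because $A$ is geometric needs a sentence: one must check that every generated flat is a union of whole $\pi_{S(A)}$-fibres (which follows from your parallelism observation together with the fact that $a$ preserves joins, so that each $\pi_N^{-1}(F)$ is itself of the form $\pi_{S(A)}^{-1}(G)$), that every fibre is nonempty (you do note this, and it is where pointedness of $N$ is used), and that semimodularity of $A$ then yields the partition property for the covers of each flat. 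With that filled in, the coarsest structure exists and your argument goes through; the resulting matroid agrees with the paper's explicit $R^N_l(G)$.
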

\begin{proof}
  For an object $l \colon G \to L(N)$ in $\GLat/L(N)$, where $G$ has atoms $a_i$, define $R^N_l(G)$ to be the matroid with flats $G$ and the same rank-0 flat as $N$ where the nonloop elements of each rank-1 flat $F_i$ with $L(F_i)=a_i$ are copies, indexed by $i$, of the elements of $L^{-1}(l(a_i))$, including extra copies of the loops. Then $R^N(l) \colon R^N_l(G)\to N$ is the map with $L(R^N(l))=l$ that sends each element in $R^N_l(G)$ to the unindexed version of the element in $N$.

  For a morphism $f \colon l_1 \to l_2$ in $\GLat/L(N)$, define $R^N(f) \colon R^N_{l_1}(G)\to R^N_{l_2}(G)$ as the map with $L(R^N(f))=f$ that acts as the identity on elements. Since $R^N(l_1)\circ R^N(f)=R^N(l_2)$, this is a legitimate morphism in $\Matrp/N$, and it clearly respects identities and composition, giving a well-defined functor $R^N$.

  Moreover, we claim that the there uniquely exist morphisms $\eta_f$ and $\hat{h}$ making the following diagrams commute for every $f \colon M \to N$ in $\Matrp$,  every $g \colon K \to L(M)$ in $\GLat$, and every strong map $h \colon M \to \mathrm{dom}(R^N(g))$ with $R^N(g) \circ h = f$:
  \[\xymatrix@C+3ex@R-3ex{
     M\ar[rr]^{\eta}\ar[dr]^f\ar[dddr]^h&&M''\ar[dl]_{R^N(L(f))}\ar@{-->}@/^.5pc/[dddl]^{R^N(h)}&L(M)\ar@{-->}[ddd]^{\hat h}\ar[dr]^{L^N(f)=L(f)}\\
     &N&&&L(N)\\\\
     &M'\ar[uu]_{R^N(g)}&&K\ar[uur]^g\\
  }\]
  Namely, take $\eta$ to be the lattice-preserving map that acts as $f$ on elements when ignoring indices. In the left diagram, the upper and right triangles then commute by construction. By the left triangle, the map $h$ acts as $f$ on elements when ignoring indices. Because both paths along the outer triangle act as $f$ on elements, and both act as $h$ on the lattice, the outer triangle commutes. Since $\eta$ is lattice-preserving, there can exist at most one $\hat h$ that makes the large triangle commute; namely $L(h)$.

  Finally, to see that $R^N$ is full: within each rank-1 flat, the strong maps forming the objects of $\Matrp/N$ are one-to-one. Therefore, fixing the lattice maps that form the objects of $\GLat/L(N)$ constrains the morphisms of $\Matrp/N$ to identities on elements. 
  \qed
\end{proof}

\begin{corollary}
  Any orthogonal factorisation system $(\mathcal{L},\mathcal{R})$ in $\GLat$ induces an orthogonal factorization system $(L^{-1}(\mathcal{L}),\mathcal{R}')$ in $\Matrp$.
\end{corollary}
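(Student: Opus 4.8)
The plan is to keep $\mathcal{L}' = L^{-1}(\mathcal{L})$ exactly as in the statement and to \emph{define} the right class as the orthogonal complement $\mathcal{R}' = (\mathcal{L}')^{\perp}$, the class of strong maps admitting a unique diagonal fill-in against every member of $\mathcal{L}'$. With this choice the uniqueness-of-fill-in clause of Definition~\ref{def:factorization} holds by fiat, so the two things I would actually have to establish are (i) that every strong map factors as a member of $\mathcal{R}'$ after a member of $\mathcal{L}'$, and (ii) that the two classes are genuine orthogonal complements. The closure bookkeeping comes for free: since $L$ is a functor and $\mathcal{L}$ is the left class of an orthogonal factorization system, $\mathcal{L}' = L^{-1}(\mathcal{L})$ contains all isomorphisms and is closed under composition and retracts, while $\mathcal{R}' = (\mathcal{L}')^{\perp}$ inherits the same closure properties automatically.

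For the factorization (i), I would fix $f \colon M \to N$ and factor its image as $L(f) = \rho \circ \lambda$ with $\lambda \colon L(M) \to G$ in $\mathcal{L}$ and $\rho \colon G \to L(N)$ in $\mathcal{R}$. Viewing $\rho$ as an object of $\GLat/L(N)$ and applying the right adjoint $R^N$ of the previous Proposition produces the candidate right factor $r = R^N(\rho) \colon R^N_\rho(G) \to N$. The identity $\rho \circ \lambda = L(f) = L^N(f)$ says precisely that $\lambda$ is a morphism $L^N(f) \to \rho$ in $\GLat/L(N)$, and transposing it across $L^N \dashv R^N$ yields a strong map $\ell \colon M \to R^N_\rho(G)$ over $N$, so that $r \circ \ell = f$. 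Because the adjunction unit is the lattice-preserving map of the previous proof and $R^N$ returns its input $\GLat$-morphism on lattices, I would compute $L(\ell) = \lambda \in \mathcal{L}$, so that $\ell \in \mathcal{L}'$. It then only remains to see that $r = R^N(\rho)$ lands in $\mathcal{R}'$.

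The hard part will be this last point, the transport of the \emph{unique} diagonal along the slice adjunction. Given any $\ell' \in \mathcal{L}'$ and a commuting square with $\ell'$ on the left, a map $p$ on top, $r = R^N(\rho)$ on the right and $q$ on the bottom, I would apply $L$ to obtain a commuting square in $\GLat$ whose left edge $L(\ell')$ lies in $\mathcal{L}$ and whose right edge $L(r) = \rho$ lies in $\mathcal{R}$; orthogonality of $(\mathcal{L},\mathcal{R})$ there supplies a unique diagonal $\bar h$ with $\bar h \circ L(\ell') = L(p)$ and $\rho \circ \bar h = L(q)$. Transposing $\bar h$ back across $L^N \dashv R^N$ gives a strong map $h$ over $N$ with $L(h) = \bar h$, so the lower triangle $r \circ h = q$ holds automatically; the upper triangle $h \circ \ell' = p$ then follows by comparing transposes, since $h \circ \ell'$ and $p$ are both maps into $R^N_\rho(G)$ over $N$ with equal lattice images $L(h) \circ L(\ell') = \bar h \circ L(\ell') = L(p)$. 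Uniqueness of $h$ descends from uniqueness of $\bar h$: any rival fill-in has lattice image a $\GLat$-diagonal of the same square, hence equal to $\bar h$, and the adjunction bijection $(\Matrp/N)(X, R^N Y) \cong (\GLat/L(N))(L^N X, Y)$ being realized by $L$ forces the two to coincide. The crux is exactly this last sentence: everything rests on the fullness and faithfulness of $R^N$ proved above, which is what makes maps into $R^N_\rho(G)$ over $N$ be pinned down by their action on lattices.

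Finally I would assemble the pieces. With the factorization (i) in hand and orthogonality built into $\mathcal{R}' = (\mathcal{L}')^{\perp}$, the standard characterization of orthogonal factorization systems applies; in particular $\mathcal{L}' = {}^{\perp}\mathcal{R}'$, since any map left-orthogonal to $\mathcal{R}'$ is, via its own $(\mathcal{L}',\mathcal{R}')$-factorization and the usual diagonal argument, a retract of its $\mathcal{L}'$-part, and $\mathcal{L}'$ is closed under retracts. Hence $(L^{-1}(\mathcal{L}), \mathcal{R}')$ is an orthogonal factorization system in $\Matrp$.
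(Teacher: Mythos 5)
Your argument is correct and is essentially the paper's proof with the black box opened: the paper simply cites Zangurashvili's general result on adjunctions between slice categories inducing factorization systems, applied to the proposition that $L^N \colon \Matrp/N \to \GLat/L(N)$ has a full right adjoint $R^N$, and your transposition of the $\GLat$-factorization and of the unique diagonal across $L^N \dashv R^N$ --- using that maps into $R^N(\rho)$ over $N$ are determined by their lattice images --- is exactly the content of that cited theorem in this instance. No gaps; the only difference is that your version is self-contained where the paper defers to a reference.
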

\begin{proof}
  Apply~\cite{zangurashvili2004several} to the previous proposition.
  \qed
\end{proof}

\subsection{Vector spaces}

Next we extend Example~\ref{ex:vectorspace} to a functor from the category $\FVect_k$ of finite vector spaces over $k$ and linear maps to the category of matroids.

\begin{proposition}\label{prop:functorMat}
  There is a functor $\Mat \colon \FVect_k \to \Matrp$ that sends a vector space $V$ to the matroid with ground set $V$ whose independent sets are the linearly independent subsets, acting on morphisms as $\Mat(f)=f$.
\end{proposition}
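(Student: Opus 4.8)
The plan is to verify in turn that $\Mat(V)$ is a well-defined object of $\Matrp$, that $\Mat(f)$ is a morphism of $\Matrp$, and that these assignments respect identities and composition. Most of the object-level work is already sketched in Example~\ref{ex:vectorspace}, so I would begin by recalling it carefully against Definition~\ref{def:matroid}. The empty set is vacuously linearly independent (nontriviality), any subset of a linearly independent set is linearly independent (downward closure), and the independence augmentation axiom is exactly a dimension count: if $I$ and $J$ are linearly independent with $\card I < \card J$, then $J$ cannot be contained in the span of $I$ (otherwise $\card J = \dim \mathrm{span}(J) \leq \dim \mathrm{span}(I) = \card I$), so some $e \in J \setminus I$ lies outside that span and $I \cup \{e\}$ remains independent. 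Since $V$ is a finite vector space its underlying set is finite, so the ground set is finite as required. We point $\Mat(V)$ at the zero vector $0 \in V$, which is a loop because $\{0\}$ is linearly dependent.

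Next I would record the identification, also noted in Example~\ref{ex:vectorspace}, that the closure operation of $\Mat(V)$ sends a set to its linear span, so that the flats of $\Mat(V)$ are precisely the linear subspaces of $V$. This is the bridge to the morphism part: by Definition~\ref{def:strongmap} a function is a strong map exactly when inverse images of flats are flats, and for a linear map $f \colon V \to W$ the inverse image $f^{-1}(U)$ of a linear subspace $U$ of $W$ is again a linear subspace of $V$, hence a flat of $\Mat(V)$. Therefore $\Mat(f) = f$ is strong. It is moreover pointed, since linearity forces $f(0) = 0$ and so it preserves the distinguished point. Hence $\Mat(f)$ is a morphism of $\Matrp$.

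Finally, functoriality is immediate because $\Mat$ leaves underlying functions untouched: $\Mat(\id{V}) = \id{V} = \id{\Mat(V)}$ and $\Mat(g \circ f) = g \circ f = \Mat(g) \circ \Mat(f)$. The only genuinely mathematical content is thus the object-level augmentation axiom, which is the linear-algebraic exchange argument above, together with the observation that flats coincide with subspaces; once the latter is in hand the strong-map condition reduces verbatim to the elementary fact that linear preimages of subspaces are subspaces, so I expect no real obstacle. The mildest point to take care over is keeping the two notions of ``flat'' aligned, namely confirming that $f^{-1}$ of a flat is literally a flat of $\Mat(V)$ rather than merely a subspace living elsewhere, but since every subspace of $V$ \emph{is} a flat of $\Mat(V)$ this is automatic.
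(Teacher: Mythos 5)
Your proposal is correct and its essential content coincides with the paper's one-line proof: flats of $\Mat(V)$ are exactly the linear subspaces, so the strong-map condition for $\Mat(f)=f$ reduces to the fact that the preimage of a subspace under a linear map is a subspace. The extra verifications you include (the exchange axiom via a dimension count, pointedness via $f(0)=0$, and functoriality being trivial on underlying functions) are all accurate and merely spell out what the paper delegates to Example~\ref{ex:vectorspace}.
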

\begin{proof}
  Since flats correspond to vector subspaces, $\Mat(f)$ is indeed a strong map as the inverse image of a vector subspace is a vector subspace.
  \qed
\end{proof}

The functor $\Mat$ is faithful, but not full: for $V=k=\mathbb{Z}_4$, the function $f \colon V \to V$ given by $f(0)=0$, $f(1)=2$, $f(2)=1$, $f(3)=3$ is a strong map $\Mat(V) \to \Mat(V)$ but not linear.

\begin{lemma}
  The functor $\Mat$ does not preserve coproducts, so has no right adjoint.
\end{lemma}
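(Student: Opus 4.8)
The plan is to reduce the nonexistence of a right adjoint to the failure to preserve coproducts: if $\Mat$ had a right adjoint it would be a left adjoint, hence preserve all colimits, and in particular coproducts. So it suffices to exhibit finite vector spaces whose coproduct $\Mat$ does not send to a coproduct.

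First I would identify the coproducts on both sides. In $\FVect_k$ the coproduct of $V$ and $W$ is the direct sum $V \oplus W$, so $\Mat(V \oplus W)$ has ground set $V \oplus W$, of cardinality $\card V \cdot \card W$. For the target, recall from Remark~\ref{rem:reviewerscomment} that $\Matrp$ is (isomorphic to) the coslice $C(1) \slash \Matr$, so the coproduct of two pointed matroids is the pushout in $\Matr$ that glues their distinguished loops; this exists by Proposition~\ref{prop:pushouts}, since $C(1)$ is cofree. Because $|-| \colon \Matr \to \cat{FinSet}$ has the right adjoint $C$ by Theorem~\ref{thm:finset}, it preserves colimits; hence the underlying set of $\Mat(V) + \Mat(W)$ is the pushout of $V \leftarrow \{0\} \rightarrow W$ identifying the two zero vectors, of cardinality $\card V + \card W - 1$.

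The final step is a cardinality comparison. Since isomorphic matroids have equinumerous ground sets, an isomorphism $\Mat(V \oplus W) \cong \Mat(V) + \Mat(W)$ would force $\card V \cdot \card W = \card V + \card W - 1$, that is $(\card V - 1)(\card W - 1) = 0$, so $V$ or $W$ would be zero. Taking $V = W = k$ nonzero therefore yields $\Mat(V \oplus W) \not\cong \Mat(V) + \Mat(W)$, so $\Mat$ does not preserve this coproduct, and consequently has no right adjoint.

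I expect the main obstacle to be pinning down the coproduct in the pointed category $\Matrp$ correctly: specifically, that it glues the two distinguished loops rather than keeping them separate, and that $|-|$ transports it to the point-identified pushout of sets. Once that is settled, the cardinality count is immediate, and no finer matroid invariant (rank, number of flats) is needed.
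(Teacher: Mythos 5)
Your proposal is correct and follows essentially the same route as the paper: both arguments reduce to comparing the cardinality of the ground set of $\Mat(V\oplus W)$ (namely $\card V\cdot\card W$) with that of the coproduct of $\Mat(V)$ and $\Mat(W)$, which is additive in $\card V$ and $\card W$. If anything you are more careful than the paper, which writes $|M+N|=|M|+|N|$ as for the unpointed coproduct, whereas you correctly account for the identification of the two distinguished loops in $\Matrp$, giving $\card V+\card W-1$; either count differs from $\card V\cdot\card W$ once $V$ and $W$ are nonzero, so the conclusion is the same.
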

\begin{proof}
  As we saw in Section~\ref{sec:limits} above, coproducts of matroids have to satisfy $|M + N| = |M| + |N|$, but $|\Mat(V \oplus W)| = V \oplus W \neq V \sqcup W = |\Mat(V) + \Mat(W)|$. 
  \qed
\end{proof}

A matroid $M$ is \emph{representable} (over $k$) if there is a strong map $f \colon M \to \Mat(V)$ for some vector space $V$ (over $k$) such that a subset $X\subseteq |M|$ is independent if and only if $f(X)$ is independent. 
In particular, to a matrix with entries in $k$ we may associate a matroid whose ground set consists of the columns of the matrix, and where a subset is independent if the corresponding columns are linearly independent. The rank function of the matroid counts the rank of the matrix of selected columns. 
Every representable matroid arises this way.
When the matroid $N$ is represented by a matrix $A$, and $B\subseteq |M|$, we write $A_{[B]}$ for the ordered  set of columns of $A$ labelled by elements of $B$.
Not all matroids are representable, so the functor $\Mat$ is not surjective on objects.
Nor is it injective on objects: swapping the role of two collinear elements in a vector space results in the same matroid. 

We now embark on altering the functor $\Mat$ to make it encompass all matroid representation. Intuitively, we consider the above way to turn a matrix into a matroid, and remove some structure from the matrix. 

\begin{lemma}\label{lem:matrixrank}
  If matrices $A,B,C$ satisfy $A=CB$, and $I\subseteq J$ are sequences of columns of $B$, and $I'\subseteq J'$ are the corresponding sequences of columns of $A$, then 
  \[
    \rank(A_{[J']})-\rank(A_{[I']})\leq\rank(B_{[J]})-\rank(B_{[I]}).
  \]
\end{lemma}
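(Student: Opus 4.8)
The plan is to prove the inequality in two stages, first establishing the key case where no columns are deleted (so $I = I' = \emptyset$), and then using subadditivity/superadditivity of rank to bootstrap to the general statement. The core observation is that multiplication by $C$ on the left can only decrease rank: for any sequence of columns indexed by a set $S$, we have $A_{[S']} = C \cdot B_{[S]}$, so the column space of $A_{[S']}$ is the image under $C$ of the column space of $B_{[S]}$, giving $\rank(A_{[S']}) \le \rank(B_{[S]})$ for every $S$. This immediately handles the inequality when $I$ is empty.

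For the general case, I would reduce to showing that the quantity $\rank(A_{[J']}) - \rank(B_{[J]})$ is monotonically nonincreasing as $J$ grows (equivalently, that the ``defect'' between the $B$-rank and $A$-rank can only increase when columns are added). Concretely, since $I \subseteq J$, I would compare $\rank(A_{[J']}) - \rank(A_{[I']})$ against $\rank(B_{[J]}) - \rank(B_{[I]})$ by noting that each of these differences counts how many new independent directions the extra columns $J \setminus I$ contribute, over $A$ and over $B$ respectively. The linear map $C$ sends a spanning set of the extra directions in $B$ onto a spanning set of the extra directions in $A$, so the number of new independent columns over $A$ cannot exceed the number over $B$. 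Making this precise is cleanest via the submodularity (valuation) property: writing everything in terms of rank as a submodular/matroid rank function and using that $C$ induces a linear map on column spaces.

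The most rigorous route I would take is to phrase the statement as a consequence of Lemma~\ref{lem:latticemaps}(b). The assignment sending a column-sequence to its span in $A$-space factors through the span in $B$-space via $C$, so the map on column indices that takes $B$-columns to $A$-columns is exactly a strong map of the representable matroids involved (the inverse image of a subspace under $C$ is a subspace). The desired inequality is then literally condition (b) of Lemma~\ref{lem:latticemaps} applied to this strong map, with $X = I$, $Y = J$, reading $\rank$ as the matroid rank of the selected columns. This is the slickest argument and avoids any hand computation.

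The step I expect to be the main obstacle is handling the bookkeeping between the two matroids cleanly: the rank function on the $A$-side and the rank function on the $B$-side live on different representable matroids, and one must verify that the column-correspondence $I \subseteq J$ really does induce a single function on ground sets to which Lemma~\ref{lem:latticemaps} applies. In particular one must check that linear dependence among $B$-columns is carried by $C$ to linear dependence among the corresponding $A$-columns (straightforward, since $C$ is linear), and that the inverse-image-of-a-flat condition holds; the subtlety is that $C$ need not be injective, so the induced map may collapse distinct columns, but this only strengthens the inequality and does not break the strong-map property. Once this correspondence is set up correctly, the inequality follows immediately.
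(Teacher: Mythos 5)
Your proposal is correct, but it reaches the inequality by a different route than the paper. The paper's proof adds the columns of $J\setminus I$ one at a time and tracks the defect $r=\rank(B_{[I]})-\rank(A_{[I']})$ through a three-way case analysis (the fourth case, where the $B$-rank stays put but the $A$-rank grows, is excluded because $b_j\in\mathrm{span}(B_{[I]})$ forces $Cb_j\in\mathrm{span}(A_{[I']})$), concluding that the defect is monotone under adding columns. Your main argument instead handles all of $J\setminus I$ at once: since $C$ carries $\mathrm{span}(B_{[I]})$ onto $\mathrm{span}(A_{[I']})$ and $\mathrm{span}(B_{[J]})$ onto $\mathrm{span}(A_{[J']})$, it induces a surjection of quotient spaces $\mathrm{span}(B_{[J]})/\mathrm{span}(B_{[I]})\twoheadrightarrow\mathrm{span}(A_{[J']})/\mathrm{span}(A_{[I']})$, and comparing dimensions gives the inequality in one step. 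This is cleaner and more conceptual than the paper's induction, at the cost of a little linear-algebra setup; to be fully rigorous you should state the induced quotient map explicitly rather than speaking of ``spanning sets of extra directions.'' Your third route---verify that the column correspondence is a strong map of representable matroids via the flat-preimage condition and then invoke Lemma~\ref{lem:latticemaps}(b)---is also valid and not circular, since you check condition (a) directly (preimages of subspaces under $C$ are subspaces, hence preimages of flats are flats, and non-injectivity of $C$ is harmless). However, it is the least advisable in context: the whole point of Lemma~\ref{lem:matrixrank} in the paper is to supply the rank inequality that Theorem~\ref{thm:M:extension} feeds into Lemma~\ref{lem:latticemaps}(b) to conclude strongness, so proving it by first establishing strongness makes the matrix lemma redundant rather than elementary. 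Stick with your quotient-space argument.
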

\begin{proof}
  Take any column $j\in J \setminus I$ and the corresponding $j'\in J' \setminus I'$. 
  Comparing $r=\rank(B_{[I]})-\rank(A_{[I']})$ with $r'=\rank(B_{[I\cup\{j\}]})-\rank(A_{[I'\cup\{i'\}]})$ gives three cases:
  \begin{itemize}
    \item$\rank(B_{[I\cup\{j\}]})=\rank(B_{[I]})$ and $\rank(A_{[I'\cup\{j'\}]})=\rank(A_{[I']})$, so $r'=r$.
    \item$\rank(B_{[I\cup\{j\}]})=\rank(B_{[I]})+1$ and $\rank(A_{[I'\cup\{j'\}]})=\rank(A_{[I']})$, so $r'=r+1$.
    \item$\rank(B_{[I\cup\{j\}]})=\rank(B_{[I]})+1$ and $\rank(A_{[I'\cup\{j'\}]})=\rank(A_{[I']})+1$, so $r'=r$.
  \end{itemize}
  In all cases $r\leq r'$, and so $\rank(A_{[I'\cup\{j'\}]})-\rank(A_{[I']})\le\rank(B_{[I\cup\{j\}]})-\rank(B_{[I]})$. The proof is completed by repeating for the other elements of $J \setminus I$.
  \qed
\end{proof}

We will consider matrices as multisets of vectors. Recall that a \emph{multisubset} of a set $S$ is a function $j \colon S \to \mathbb{N}$, which is \emph{finite} when $\supp(j)=j^{-1}(\mathbb{N} \setminus \{0\})$ is finite; a map between multisubsets $j \to j'$ is a function $\supp(j) \to \supp(j')$.

\begin{definition}\label{def:MVect}
  Write $\MVect_k$ for the category whose objects are finite multisubsets $j \colon V \to \mathbb{N}$ of some vector space $V$ over $k$, and whose morphisms $(V,j) \to (V',j')$ are linear maps $V \to V'$ that restrict to $\supp(j) \to \supp(j')$.
\end{definition}

There is a canonical inclusion $\cat{Vect}_k \to \MVect_k$.

\begin{theorem}\label{thm:M:extension}
  There is a functor $\Mat \colon \MVect_k \to \Matr$ sending $(V,j)$ to the matroid with ground set having an element for each element of $\supp(j)$ where a subset is independent if and only if the corresponding multisubset of vectors in $V$ is (a subset and) linearly independent.
  It makes the following diagram commute:
  \[\xymatrix@R-5ex{
    \FVect_k \ar[dd] \ar[dr]^-{\Mat}\\
    & \Matr \\
    \MVect_k \ar[ur]_-{\Mat}
  }\]
  It is the left Kan extension of $\Mat \colon \cat{Vect}_k \to \Matr$ along $\cat{Vect}_k \to \MVect_k$.
\end{theorem}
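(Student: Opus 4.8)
The plan is to prove the three assertions in turn—functoriality of $\Mat$, commutativity of the triangle, and the universal property of the left Kan extension—the last being the substantive part. For well-definedness on objects, the matroid assigned to $(V,j)$ is just the vector matroid on the finite set $\supp(j)$ of distinct vectors, so the independence axioms of Definition~\ref{def:matroid} hold exactly as in Example~\ref{ex:vectorspace}. For a morphism $(V,j)\to(V',j')$, given by a linear map $\phi$ with $\phi(\supp(j))\subseteq\supp(j')$, the induced function $f$ on ground sets sends $v\mapsto\phi(v)$; to see it is strong I would invoke Lemma~\ref{lem:latticemaps}(b). Writing $B$ for the matrix whose columns are the vectors of $\supp(j)$ and $C$ for a matrix of $\phi$, so that $A=CB$ has columns indexed by $\supp(j)$ with column $v$ equal to $\phi(v)$, the matroid rank of $X\subseteq\supp(j)$ equals $\rank(B_{[X]})$ and the rank of $f(X)$ equals $\rank(A_{[X]})$ (repeated columns leave the span unchanged). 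Lemma~\ref{lem:matrixrank} then yields $\rank(f(Y))-\rank(f(X))\le\rank(Y)-\rank(X)$ for all $X\subseteq Y$, which is precisely condition (b). Preservation of identities and composition is immediate since the underlying functions compose.

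Commutativity of the triangle is then routine: the inclusion sends a vector space $W$ to the full-support multisubset, whose associated matroid is exactly the $\Mat(W)$ of Proposition~\ref{prop:functorMat}, and both functors act as the underlying linear map on morphisms, so $\Mat\circ(\FVect_k\to\MVect_k)=\Mat$ strictly.

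The heart of the argument is the Kan extension. I would first record that $\iota\colon\cat{Vect}_k\to\MVect_k$ is fully faithful and the triangle commutes strictly, so the comparison unit may be taken to be the identity and it suffices to verify that for every $G\colon\MVect_k\to\Matr$ and every natural $\gamma\colon\Mat\circ\iota\Rightarrow G\circ\iota$ there is a unique natural $\delta\colon\Mat\Rightarrow G$ with $\delta\iota=\gamma$. The organizing device is the canonical morphism $u_{(V,j)}=\mathrm{id}_V\colon(V,j)\to\iota(V)$, which is legitimate in $\MVect_k$ precisely because $\iota(V)$ has full support; applying $\Mat$ turns it into the embedding $\epsilon\colon\Mat(V,j)\hookrightarrow\Mat(V)$ realizing $\Mat(V,j)$ as the deletion of $V\setminus\supp(j)$. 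Naturality of $\delta$ against $u_{(V,j)}$ forces the equation $G(u_{(V,j)})\circ\delta_{(V,j)}=\gamma_V\circ\epsilon$, and I would use this defining equation to construct $\delta_{(V,j)}$ on the ground set $\supp(j)$, to check it is strong, and finally to verify naturality in $(V,j)$ by comparing, for any morphism of $\MVect_k$, the two canonical triangles over the full-support objects.

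I expect the uniqueness and well-definedness of $\delta$ to be the main obstacle, for a structural reason worth flagging: this Kan extension is \emph{not} pointwise. Indeed the comma category $\iota\downarrow(V,j)$ can be empty—already for $k=\mathbb{F}_2$, $V=\mathbb{F}_2^2$, and $\supp(j)=\{(1,0),(0,1)\}$ there is no linear map from any vector space whose image lies in $\supp(j)$, since every nonzero subspace, and indeed every subspace, contains $0\notin\supp(j)$—so the usual colimit formula would return the initial (empty) matroid instead of the two-element matroid $\Mat(V,j)$. In any case $\Matr$ lacks the relevant colimits by Section~\ref{sec:limits}, so the universal property cannot be read off from a pointwise computation and must be checked by hand. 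The delicate point is therefore to show that naturality with respect to the maps $u_{(V,j)}$ into full-support objects, together with the given $\gamma$ on the image of $\iota$, rigidifies $\delta$ completely, despite the scarcity of morphisms running from $\iota$-objects into a general $(V,j)$.
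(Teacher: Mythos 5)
Your handling of functoriality and of the commuting triangle is correct and essentially the paper's own argument: the paper likewise reduces strength of $\Mat(f)$ to Lemma~\ref{lem:latticemaps}(b) via Lemma~\ref{lem:matrixrank}, inserting an intermediate matroid $\Mat(f(S))$ to deal with repeated columns where you use a parenthetical remark. The genuine gap is in the Kan extension claim, which you rightly single out as the substantive part but do not prove. Your mechanism is to extract $\delta_{(V,j)}$ from the equation $G(u_{(V,j)})\circ\delta_{(V,j)}=\gamma_V\circ\epsilon$ obtained by naturality against $u_{(V,j)}\colon(V,j)\to\iota(V)$. That equation pins down $\delta_{(V,j)}$ only when $G(u_{(V,j)})$ is injective, which has no reason to hold for an arbitrary test functor $G$; you acknowledge this as ``the delicate point'' and defer it, so the core of the theorem --- existence and uniqueness of $\delta$ --- is missing. (The paper's own proof is shaped differently: it appeals to naturality against arbitrary morphisms $g\colon 1\to 2$ of $\MVect_k$, not only the maps into full-support objects, though it too is extremely terse.)

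Moreover, the deferred step is not merely delicate: your own observation that $\iota\downarrow(V,j)$ is empty whenever $0\notin\supp(j)$ can be turned into an obstruction. Test the universal property against functors of the form $G=C\circ\Phi$ with $\Phi\colon\MVect_k\to\cat{FinSet}$ and $C$ the cofree functor of Theorem~\ref{thm:finset}; since every function into a cofree matroid is strong, natural transformations $\overline{\Mat}\Rightarrow C\Phi$ are exactly natural transformations $|\overline{\Mat}(-)|\Rightarrow\Phi$ of set-valued functors. Because no morphism of $\MVect_k$ can move $0$ out of the support, the assignment $\Phi(V,j)=\supp(j)\times\{1,2\}$ when $0\notin\supp(j)$ and $\Phi(V,j)=\supp(j)$ otherwise is functorial (collapsing the two copies along any morphism whose codomain has $0$ in its support), and it admits two distinct natural transformations from $|\overline{\Mat}(-)|$, namely $v\mapsto(v,1)$ and $v\mapsto(v,2)$ on the objects missing $0$; both restrict to the identity on the image of $\iota$. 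So naturality against the $u_{(V,j)}$ --- or indeed against all morphisms --- does not rigidify $\delta$, and the uniqueness you need cannot be established along the route you propose without further restrictions on the test functors or a reformulation of the statement.
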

\begin{proof}
  It is clear that $\Mat \colon \MVect_k \to \Matr$     is well-defined on objects and functorial. 
  By construction the diagram commutes.
  To see that it is well-defined on morphisms, consider a morphism $(V,j) \to (V',j')$ of $\MVect_k$. Write $S=\supp(j)$, $S'=\supp(j')$, and denote the restriction by $f \colon S \to S'$. 
  The matroid $\Mat(f(S))$ has the same rank function as $\Mat(S')$, because both count the rank of submatrices of $f(S)$ and $S'$, which are identical apart from possibly repeated columns. Lemma~\ref{lem:latticemaps}(b) implies that this map $\Mat(f(S)) \to \Mat(S')$ is strong. Therefore it suffices to prove that $\Mat(S) \to \Mat(f(S))$ is strong.

  Choose bases for $V,V'$, so we may regard all of $f,V,V',S,S'$ as matrices.
  Let $I \subseteq J \subseteq |\Mat(S)|$. By Lemma~\ref{lem:matrixrank} then
  \begin{align*}
    & \rank_{\Mat(S')}\big(\Mat(f)(J)\big) - \rank_{\Mat(S')}\big(\Mat(f)(I)\big) \\
    & = \rank_{\Mat(f(S))}\big(\Mat(f)(J)\big) - \rank_{\Mat(f(S))}\big(\Mat(f)(I)\big) \\
        & = \rank\big((f(S))_{[\Mat(f)(J)]}\big) - \rank\big((f(S))_{[\Mat(f)(I)]}\big)\\
    & \leq \rank(S_{[J]}) - \rank(S_{[I]}) \\
    & = \rank_{\Mat(S)}(J) - \rank_{\Mat(S)}(I).
  \end{align*}
  Hence $\rank_{\Mat(S')}\big(\Mat(f)(J)\big) - \rank_{\Mat(S')}\big(\Mat(f)(I)\big) \leq \rank_{\Mat(S)}(J) - \rank_{\Mat(S)}(I)$, which by Lemma~\ref{lem:latticemaps}(b) implies that $\Mat(f)$ is strong.

  Finally, to see that this functor $\overline{\Mat} \colon \MVect_k \to \Matr$ is a left Kan extension of $\Mat \colon \cat{Vect}_k \to \Matr$ along $I \colon \cat{Vect}_k \to \MVect_k$, observe that for every $\Mat' \colon \MVect_k\to\Matr$ and natural transformation $\alpha \colon \Mat \Rightarrow \Mat' \circ I$ there is a unique natural transformation $\beta \colon \overline{\Mat} \Rightarrow \Mat'$ such that $\beta I = \alpha$. Uniqueness follows from the fact that any $\beta$ satisfying $\beta_2\circ \overline{\Mat}(g) = \Mat'(g) \circ \beta_1$ for all $g \colon 1\to2$ in $\MVect_k$ must act as the identity on elements (regarding repetitions as the same element). To show existence, it is easy to check that this $\beta$ is a strong map.
  \qed
\end{proof}

The rest of this subsection considers the functor $L \circ \Mat$ that turns a vector space into its lattice of vector subspaces, which is of interest to \textit{e.g.}\ quantum logic.

\begin{proposition}\label{prop:LMnearlyfaithful}
  The functor $L\circ M \colon \FVect_k \to \GLat$ is nearly faithful.
\end{proposition}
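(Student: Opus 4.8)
The plan is to unwind the composite $L \circ \Mat$ explicitly and reduce the statement to an elementary fact about linear maps. Recall from Example~\ref{ex:vectorspace} that $\Mat(V)$ has ground set $V$ with the linear subspaces as flats, so $L(\Mat(V))$ is the lattice of subspaces of $V$, whose atoms are the lines $\langle v\rangle$ through the origin. Since a morphism in $\GLat$ is determined by its action on atoms, and $L(\Mat(f))(\langle v\rangle)=\clos(f(\langle v\rangle))=\langle f(v)\rangle$, the hypothesis $L(\Mat(f))=L(\Mat(g))$ for linear maps $f,g\colon V\to W$ is equivalent to the condition that $\langle f(v)\rangle=\langle g(v)\rangle$ for every $v\in V$; that is, $f(v)$ and $g(v)$ are proportional for each $v$ (in particular $\ker f=\ker g$). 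The relevant enrichment is over the multiplicative monoid $M=k$ of scalars, acting by $(c\cdot f)(v)=c\,f(v)$, so it suffices to produce scalars $c_1,c_2\in k$, not both zero, with $c_1 f=c_2 g$.

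First I would dispose of the degenerate case: if $f=0$ then the proportionality condition forces $g(v)=0$ for all $v$, hence $g=0$, and $1\cdot f=1\cdot g$. So assume $f,g\neq 0$; the goal is then $f=\lambda g$ for a single $\lambda\in k$. For each $v\notin\ker f$ write $f(v)=\lambda_v\,g(v)$ with $\lambda_v\neq 0$, and the task is to show $\lambda_v$ is independent of $v$. The main step treats vectors with linearly independent images: if $f(v)$ and $f(w)$ are linearly independent then so are $g(v)=\lambda_v^{-1}f(v)$ and $g(w)=\lambda_w^{-1}f(w)$, and expanding the additivity relation $f(v+w)=f(v)+f(w)$ via $f(v+w)=\lambda_{v+w}\,g(v+w)=\lambda_{v+w}\big(g(v)+g(w)\big)$ and comparing coefficients in the basis $\{g(v),g(w)\}$ of their span forces $\lambda_v=\lambda_{v+w}=\lambda_w$.

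It remains to propagate this equality to all vectors, and here the rank of $f$ matters. If $\rank(f)\geq 2$, then for any $v,w\notin\ker f$ I can choose $u\notin\ker f$ with $f(u)$ linearly independent from $f(v)$ (and hence, when $f(w)$ is proportional to $f(v)$, also from $f(w)$), so the independent-image step yields $\lambda_v=\lambda_u=\lambda_w$ and $\lambda$ is constant. The remaining case, which I expect to be the main obstacle, is $\rank(f)=1$: all images then lie on a single line $\ell$, the additivity trick no longer separates $\lambda_v$ and $\lambda_w$, and a separate argument is needed. In that case the proportionality condition forces $\mathrm{im}(g)=\ell$ as well, so after identifying $\ell\cong k$ both $f$ and $g$ become nonzero linear functionals $V\to k$ with the common kernel $\ker f=\ker g$ of codimension $1$; two such functionals are necessarily scalar multiples of one another, giving $f=\lambda g$. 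In every case we obtain $1\cdot f=\lambda\cdot g$ with $\lambda\in k$, so $L\circ\Mat$ is nearly faithful.
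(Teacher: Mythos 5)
Your proof is correct and follows essentially the same strategy as the paper's: reduce the claim to showing that two linear maps inducing the same map on subspace lattices are scalar multiples of one another, obtain a pointwise scalar $\lambda_v$ from the condition $\langle f(v)\rangle=\langle g(v)\rangle$, and then use additivity on a pair of vectors to force the scalars to coincide. One genuine difference in your favour: you run the coefficient comparison in the \emph{codomain}, requiring $f(v)$ and $f(w)$ to be linearly independent, and you therefore notice that the case $\rank(f)\leq 1$ needs a separate argument (which you supply via the two-functionals-with-equal-kernel observation). The paper's proof instead justifies the comparison by saying that $v-v'$ is not a multiple of $v$ in the \emph{domain}, which only yields the needed independence of $g(v-v')$ and $g(v)$ when $g$ is injective on the span of $v,v'$; for rank-one maps that step silently fails, so your explicit case split closes a small gap rather than merely duplicating the published argument.
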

\begin{proof}
  It suffices to show that two linear maps $f,g \colon V \to W$ give rise to the same lattice morphism $L(\Mat(f)) = L(\Mat(g))$ if and only if they are multiples of each other.
  For all flats $X$ of $\Mat(V)$ we have $\clos\big(\Mat(f)(X)\big)=\clos\big(\Mat(g)(X)\big)$. 
  It follows from faithfulness of $M$ that $f(X)=g(X)$ for every subspace $X$ of $V$. 
  Therefore $f(v)=w$ implies $g(v)=\beta_vw$ for some $\beta_v \in k$. 
  For $v,v' \in V$ we have $f(v-v')=g(\beta v-\beta' v')$ for some $\beta,\beta' \in k$. So $f(v-v')=\beta' g(v-v')+(\beta-\beta')g(v)$. Choosing $v,v'$ that are not multiples of each other, $v-v'$ cannot be a multiple of $v$, so we must have $\beta=\beta'$, and $f=\beta g$.
  \qed
\end{proof}

The functor $L\circ \Mat$ is not full. To see this, consider the $\mathbb{Z}_2$-vector space $V=\mathbb{Z}_2^2$, which has three 1-dimensional subspaces. Let $A=\{(0,0),(1,0)\}$, $B=\{(0,0),(0,1)\}$ and $C=\{(0,0),(1,1)\}$, and consider the lattice map $g \colon L(\Mat(V))\to L(\Mat(V))$ that sends all atoms to $B$.
There is only one assignment $f \colon V \to V$ with $L(f)=g$, namely $f(0,0)=(0,0)$, $f(1,0)=(0,1)$, $f(0,1)=(0,1)$ and $f(1,1)=(0,1)$, but this function is not linear.
Even if we allow infinite base fields, $L \circ M$ is not full. 
For example, take $V=\mathbb{R}^2$, map all the lines through the origin to one of the axes, and the origin to itself. This preserves the loop, atoms, and joins, because everything is mapped to the same subspace. But there is no linear map implementing this assignment.

\subsection{Graphs}\label{subsec:graphs}

Lastly we briefly discuss functoriality of the construction of Example~\ref{ex:graph} turning an undirected graph into a matroid. We simplify the definition of undirected graph~\cite{brown2008graphs}, as we need not to distinguish bands and loops.

\begin{definition}\label{def:graph}
  Write $\cat{Graph}$ for the following category. Objects are undirected multigraphs: a set $V$ of vertices, a set $E$ of edges and a boundary map $\theta$ from $E$ to the class of singleton and two-element subsets of $V$.
  A morphism is a pair of maps $f \colon V \to V'$ and $g \colon E \to E'$ satisfying $f \circ \theta = \theta' \circ g$, and $\card(\theta'(e))\leq \card(\theta(e))$.
\end{definition}

To extend Example~\ref{ex:graph} to a functor $\cat{Graph} \to \Matr$, we could restrict the category of graphs to only permit `strong' morphisms of graphs, whose preimage preserves closed sets (here a set of edges is closed if the addition of an edge does not change the size of a spanning tree in the corresponding subgraph). There is some evidence that this choice of morphisms is useful for some applications of graph theory~\cite{recski1987elementary}.
Alternatively, we could allow more functions than strong maps as morphisms between matroids. We must at least keep the restriction that loops map to loops.
Or we could restrict both the domain and codomain.
Let us write $\cat{Graph}^*$ and $\Matr^*$ for the chosen domain and codomain.

For a functor $\Mat \colon \cat{Graph}^* \to \Matr^*$ to be of any practical use it should act as the identity on morphisms. It must then have the following properties:
\begin{itemize}
  \item It cannot be surjective on objects. A matroid is of the form $\Mat(G)$ precisely when it is \emph{graphic}, and there exist non-graphic matroids.

  \item It cannot be injective on objects.\footnote{We may actually make the functor injective by altering the graph $G$: Form the graph $G'$ by adding to $G$ an extra vertex $v$ as well as one edge $vw$ for every pre-existing vertex $w$; take $M(G)$ to be the cycle matroid of $G'$.} Here are graphs $G_1 \not\simeq G_2$ with $\Mat(G_1) = \Mat(G_2)$:
    \[\xymatrix@C-1ex{
      \bullet && \bullet \\
      & \bullet \ar@{-}[ur] \ar@{-}[ul]
    } \qquad\qquad \xymatrix{
      \bullet & \bullet \\
      \bullet \ar@{-}[u] & \bullet \ar@{-}[u]
    }\]

  \item It cannot be full. There are no maps $G_1\to G_2$ that are surjective on edges, whereas $M(G_1)$ must have at least one morphism to itself (namely the identity).

  \item It cannot be faithful. Functions $G_2\to G_1$ corresponding to the identity matroid map may act differently on vertices.
\end{itemize}

One could define functors from the category of graphs and strong maps to the category of matroids that assign more obscure matroids to graphs, that we briefly list, but none of them is surjective or injective on objects, nor full or faithful.
\emph{Bicircular matroids}~\cite[Section 12.1]{oxley:matroids}, \emph{frame matroids} and \emph{lift matroids}~\cite{zaslavsky:biasedgraphs} reduce to the cycle matroid for the two graphs given above. \emph{Bond matroids}~\cite[Section 2.3]{oxley:matroids}, in the case of planar graphs, reduce to the cycle matroid of the dual graph. A \emph{transversal matroid}~\cite[Section 1.6]{oxley:matroids} is defined on the vertices of one side of a bipartite graph.

Recall that the coproduct in $\cat{Graph}$ is given by disjoint union. Taking strong morphisms in does not change this, that is, the inclusion $\cat{Graph}^* \to \cat{Graph}$ preserves and reflects coproducts. Therefore the functor $\Mat \colon \cat{Graph}^* \to \Matr$ must preserve coproducts.

\section{Constructions}\label{sec:constructions}

This section examines functoriality of various operations of matroid theory. 

We start with one of the most prominent ones: the \emph{dual} $M^*$ of a matroid $M$ has the same ground set, but the bases of $M^*$ are complements of bases of $M$.
One might hope that taking duals is functorial on matroids and strong maps, but that is not the case.

\begin{remark}
  There are matroids $M,N$ with strong maps $M \to N$ but no strong maps $M^* \to N^*$ or $N^* \to M^*$, so taking duals is not functorial on $\Matr$.
  (To see this, take $\mathcal{F}_M=\big\{\emptyset,\{a,b,c\}\big\}$ and $\mathcal{F}_N=\big\{\{*\}\big\}$; then there is only one map $M\to N$ and no maps $N\to M$, but there are maps both $M^*\to N^*$ and $N^*\to M^*$.)

  Taking duals is functorial on the subcategory of $\Matr$ of surjective strong maps between matroids of equal cardinality, since a quotient $q \colon M \to N$ does induce a strong map $q^* \colon N^*\to M^*$.
\end{remark}

We already saw in Remark~\ref{rem:reviewerscomment} that adding loops is a functorial process. We now prove that the same holds for adding isthmuses.

\begin{proposition}\label{prop:isthmus}
  There is an endofunctor $A \colon \Matr\to\Matr$ that acts on objects as $\mathcal{B}_{A(M)}=\{ B \cup \{*\} \mid B \in \mathcal{B}_M\}$, and on morphisms as $A(f)(*)=*$ and $A(f)(x)=f(x)$.
\end{proposition}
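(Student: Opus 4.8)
The plan is to identify $A$ with the operation of taking the coproduct with a fixed matroid, and then to obtain functoriality almost for free. Write $F(\{*\})$ for the free matroid on the singleton $\{*\}$; its only basis is $\{*\}$, so $*$ is an isthmus. The central observation is that $A(M) = M + F(\{*\})$ in the sense of Proposition~\ref{prop:coproducts}, and that $A(f) = f + \id{F(\{*\})}$ is exactly the induced action of the coproduct bifunctor on morphisms.

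First I would check the object-level identification. By Proposition~\ref{prop:coproducts} the flats of $M + F(\{*\})$ are the sets $F \cup G$ with $F \in \mathcal{F}_M$ and $G$ a flat of $F(\{*\})$; since the latter are just $\emptyset$ and $\{*\}$, the flats of $M + F(\{*\})$ are the sets $F$ and $F \cup \{*\}$ for $F \in \mathcal{F}_M$. Passing from flats to the rank function and then to maximal independent sets --- the coproduct of matroids is the classical direct sum, whose basis family is standard --- yields bases exactly $\{B \cup \{*\} \mid B \in \mathcal{B}_M\}$, matching the definition of $A(M)$. In particular $A(M)$ is automatically a well-defined matroid, because coproducts exist. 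Next, since binary coproduct is a bifunctor in any category possessing binary coproducts, $- + F(\{*\})$ is a functor, and its value on a strong map $f \colon M \to N$ is the cotuple $[\iota_N \circ f,\ \iota_{F(\{*\})}]$; on ground sets this acts as $f$ on $|M|$ and as the identity on $\{*\}$, which is precisely the prescription $A(f)(x) = f(x)$ and $A(f)(*) = *$. Preservation of identities and composition is then inherited from the coproduct, so no separate argument is needed.

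For a self-contained alternative I would instead verify strongness of $A(f)$ directly from Definition~\ref{def:strongmap}: every flat of $A(N)$ is either $F'$ or $F' \cup \{*\}$ for some $F' \in \mathcal{F}_N$, and because $A(f)$ fixes $*$ while mapping each $x \in |M|$ to $f(x) \neq *$, the preimage of $F'$ is $f^{-1}(F')$ and the preimage of $F' \cup \{*\}$ is $f^{-1}(F') \cup \{*\}$; both are flats of $A(M)$ by strongness of $f$. The only genuinely substantive point --- and the one I expect to be the main obstacle --- is the object-level identification itself: confirming that adjoining $F(\{*\})$ as a coproduct summand really contributes an isthmus (equivalently, that the flat description of Proposition~\ref{prop:coproducts} delivers the basis family $\{B \cup \{*\} \mid B \in \mathcal{B}_M\}$ rather than introducing a loop or a parallel element). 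This is a short rank computation, but it is where the content of the proposition lives; everything downstream is formal.
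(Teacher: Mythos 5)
Your proposal is correct and is essentially the paper's own argument: the paper's proof likewise observes that $A = (-) + D$ where $D$ is the free matroid on $\{*\}$, so that $*$ is an isthmus by construction and functoriality is inherited from the coproduct. Your additional verification that the flat/basis description of the coproduct matches the stated basis family, and the direct check of strongness, just make explicit what the paper leaves as "clear."
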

\begin{proof}
  By construction $*$ is an isthmus in $A(M)$.
  Since $A=(-)+D$, where $D$ is the free matroid on $\{*\}$, the assignment $A$ is clearly functorial. 
  \qed
\end{proof}

Next we consider the operations of deletion and contraction. As defined in Definition~\ref{def:deletion}, they are not functorial.
To see this, suppose $f \colon M\to N$ maps $m\mapsto n$ and $g\colon L\to M$ maps $l\mapsto m$, where $l,m,n$ are all nonloops; if $m$ is among the elements by which we contract or delete but the elements $l$ and $n$ are not, then the composite morphism $f\circ g$ cannot canonically be mapped to any strong map, either covariant or contravariant.
However, these operations become functorial when we change the category to ensure the deleted/contracted elements in $M$ are exactly those that map to deleted/contracted elements of $N$.  

\begin{proposition}
  Write $\Matr_*$ for the category whose objects are pairs $(M,Z)$ of $M \in \Matr$ and $Z \subseteq |M|$, and whose morphisms $(M,Z)\to(M',Z')$ are strong maps $M\to M'$ where $Z$ is the preimage of $Z'$. There are functors:
  \begin{align*}
    C \colon \Matr_* & \to \Matr     &   D \colon \Matr_* & \to \Matr \\
    (M,Z) & \mapsto M/Z & (M,Z) & \mapsto M\backslash Z \\
    f & \mapsto f & f & \mapsto f
  \end{align*}
\end{proposition}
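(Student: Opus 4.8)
The final proposition asserts that deletion and contraction become functorial once we move to the category $\Matr_*$, whose morphisms $(M,Z)\to(M',Z')$ are strong maps $f\colon M\to M'$ satisfying $Z=f^{-1}(Z')$. Specifically, the assignments $C\colon (M,Z)\mapsto M/Z$ and $D\colon (M,Z)\mapsto M\setminus Z$, both sending a morphism $f$ to (a suitable restriction of) $f$ itself, are functors into $\Matr$.

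The plan is to verify the two functor axioms for each of $C$ and $D$, with the real content being the well-definedness on morphisms: that the claimed assignment actually lands in $\Matr$, i.e.\ that the restricted/induced map is a strong map. Preservation of identities and composites will then be immediate, since in both cases the functor acts essentially as the identity on the underlying function (restricted to an appropriate subset of the ground set), and the condition $Z=f^{-1}(Z')$ composes correctly: if $Z=f^{-1}(Z')$ and $Z'=g^{-1}(Z'')$, then $(g\circ f)^{-1}(Z'')=f^{-1}(g^{-1}(Z''))=f^{-1}(Z')=Z$, so $\Matr_*$ is genuinely a category and the functors respect composition on the nose.

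First I would treat deletion $D$. Given a morphism $f\colon(M,Z)\to(M',Z')$, the condition $Z=f^{-1}(Z')$ guarantees $f$ maps $|M|\setminus Z$ into $|M'|\setminus Z'$, so $f$ restricts to a function $f\colon |M\setminus Z|\to|M'\setminus Z'|$. To check this restriction is strong, recall that flats of $M\setminus Z$ are exactly the sets $F\cap(|M|\setminus Z)$ for flats $F$ of $M$. Given a flat $G'$ of $M'\setminus Z'$, write $G'=F'\cap(|M'|\setminus Z')$ for a flat $F'$ of $M'$; then the preimage under the restricted map is $f^{-1}(F')\cap(|M|\setminus Z)$, which is a flat of $M\setminus Z$ because $f^{-1}(F')$ is a flat of $M$ by strength of $f$. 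The condition $Z=f^{-1}(Z')$ is precisely what ensures the preimage computation does not pick up stray elements, so the restriction is well defined and strong.

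Next I would treat contraction $C$. Here the cleanest route is via Lemma~\ref{lem:latticemaps}(b), the rank inequality $\rank(f(Y))-\rank(f(X))\le\rank(Y)-\rank(X)$ for $X\subseteq Y$. The induced map on $M/Z\to M'/Z'$ again acts as $f$ on the common ground set $|M|\setminus Z$. Using the definition $\rank_{M/Z}(X)=\rank_M(X\cup Z)-\rank_M(Z)$, the rank difference for $C(f)$ unwinds to a rank difference in $M'$ between $f(X)\cup Z'$ and $f(Y)\cup Z'$; here one uses $Z=f^{-1}(Z')$ so that $f(X\cup Z)\subseteq f(X)\cup Z'$ and the contracted sets align correctly. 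Applying the rank inequality for the strong map $f\colon M\to M'$ to the enlarged sets, together with the cancellation of the constant $\rank(Z')$ terms, yields the required inequality for $C(f)$, so $C(f)$ is strong by Lemma~\ref{lem:latticemaps}. I expect this contraction case to be the main obstacle: the bookkeeping of how $Z$, $Z'$, and their closures interact under $f$ is where the hypothesis $Z=f^{-1}(Z')$ must be used carefully, and a naive argument that ignores the difference between $f(X\cup Z)$ and $f(X)\cup Z'$ could fail. Once both maps are shown strong, functoriality follows formally from the composition law for $\Matr_*$ noted above.
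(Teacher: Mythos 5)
Your proposal follows essentially the same route as the paper: functoriality is formal once the morphism assignments are shown to be well defined, deletion is the easy case, and contraction is handled through the rank characterization of Lemma~\ref{lem:latticemaps}(b). Your deletion argument via flats (the preimage of $F'\cap(|M'|\setminus Z')$ is $f^{-1}(F')\cap(|M|\setminus Z)$, using $Z=f^{-1}(Z')$) is a harmless variation on the paper's rank computation and is complete as stated.

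The one place you stop short is exactly the step you yourself flag as delicate. Unwinding the definition of contraction gives the difference $\rank_{M'}(f(Y)\cup Z')-\rank_{M'}(f(X)\cup Z')$, whereas Lemma~\ref{lem:latticemaps}(b) applied to $X\cup Z\subseteq Y\cup Z$ bounds $\rank_{M'}(f(Y)\cup f(Z))-\rank_{M'}(f(X)\cup f(Z))$; since $f(Z)$ may be a \emph{proper} subset of $Z'$ (elements of $Z'$ need not lie in the image of $f$), these are differences taken over genuinely different sets, and saying they ``align correctly'' does not close the gap. (The paper's own proof writes this step as an equality, which is likewise not literally true setwise.) The missing link is the inequality
\[
  \rank_{M'}\big(f(Y)\cup Z'\big)-\rank_{M'}\big(f(X)\cup Z'\big)\;\leq\;\rank_{M'}\big(f(Y)\cup f(Z)\big)-\rank_{M'}\big(f(X)\cup f(Z)\big),
\]
which follows from submodularity: for $B\subseteq A$ and $S\subseteq T$, the valuation axiom applied to $A\cup S$ and $B\cup T$, together with $(A\cup S)\cup(B\cup T)=A\cup T$ and $(A\cup S)\cap(B\cup T)\supseteq B\cup S$ and monotonicity, gives $\rank(A\cup T)+\rank(B\cup S)\leq\rank(A\cup S)+\rank(B\cup T)$; now take $A=f(Y)$, $B=f(X)$, $S=f(Z)$, $T=Z'$. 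With this one inequality inserted, your contraction argument is complete and the rest of the proposal goes through.
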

\begin{proof}
  The operations on objects are clearly well-defined, and identity and composition are clearly respected. We need only show that the operations on morphisms are well-defined. For $X\subseteq Y\subseteq |M| \setminus Z$:
  \begin{align*}
    \rank_{C(M')}(f(Y)) - \rank_{C(M')}(f(X)) 
    & = \rank_{M'}(f(Y) \cup Z')-\rank_{M'}(f(X)\cup Z') \\
    & = \rank_{M'}(f(Y \cup Z)) - \rank_{M'}(f(X \cup Z)) \\   
    & \leq \rank_M(Y\cup Z) - \rank_M(X \cup Z)\\
    & = \rank_{C(M)}(Y) - \rank_{C(M)}(X), \\
  \intertext{and}
    \rank_{D(M')} (f(Y)) - \rank_{D(M')}(f(X))
    & = \rank_{M'}(f(Y)) - \rank_{M'}(f(X)) \\
    & \leq \rank_M(Y) - \rank_M(X) \\
    & =\rank_{D(M)}(Y) -\rank_{D(M)}(X).
  \end{align*}
  The result now follows from Lemma~\ref{lem:latticemaps}.
  \qed
\end{proof}

We may implement a series of $n$ deletions and contractions by employing the category $\Matr_{*n}$, whose objects are $(M, Z_1,\ldots,Z_n)$ where the sets $Z_i\subseteq |M|$ are disjoint, and whose morphisms $(M,Z_1,\ldots,Z_n)\to(M',Z'_1,\ldots,Z'_n)$ are strong maps $f \colon M\to M'$ such that $Z_i$ is the preimage of $Z'_i$ for all $i$. Then we can define functors $C,D \colon \Matr_{*n+1}\to\Matr_{*n}$. The composition of all these functors produces a minor, so taking minors in $\Matr$ is functorial.

\begin{theorem}
  Deletion $D \colon \Matr_{*n+1}\to\Matr_{*n}$ is right adjoint to the inclusion $\Matr_{*n}\to\Matr_{*n+1}$ given by  $(M,Z_1,\ldots,Z_n)\mapsto(M,Z_1,\ldots,Z_n,\emptyset)$ and $f \mapsto f$.
\end{theorem}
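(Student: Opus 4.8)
The plan is to exhibit the adjunction through a natural isomorphism of hom-sets, in the explicit transpose style already used in the proof of Theorem~\ref{thm:finset}. Write $\iota$ for the inclusion, and fix objects $A=(M,Z_1,\ldots,Z_n)$ of $\Matr_{*n}$ and $B=(N,W_1,\ldots,W_{n+1})$ of $\Matr_{*n+1}$; here $D$ deletes the last marked set, so that $D(B)=(N\setminus W_{n+1},W_1,\ldots,W_n)$ and $\iota(A)=(M,Z_1,\ldots,Z_n,\emptyset)$. First I would unwind both hom-sets. A morphism $\iota(A)\to B$ is a strong map $g\colon M\to N$ with $g^{-1}(W_i)=Z_i$ for $i\le n$ and, crucially, $g^{-1}(W_{n+1})=\emptyset$; this last clause forces the image of $g$ to avoid $W_{n+1}$ entirely. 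A morphism $A\to D(B)$ is a strong map $h\colon M\to N\setminus W_{n+1}$ with $h^{-1}(W_i)=Z_i$ for $i\le n$.

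The bijection sends $g$ to its corestriction $\bar g\colon M\to N\setminus W_{n+1}$, which is well defined precisely because of the clause $g^{-1}(W_{n+1})=\emptyset$, and sends $h$ to the composite $e\circ h$, where $e\colon N\setminus W_{n+1}\to N$ is the embedding of Definition~\ref{def:deletion}. These two assignments agree on underlying functions and are manifestly mutually inverse. The preimage conditions on the $Z_i$ transfer immediately: each $W_i$ with $i\le n$ is disjoint from $W_{n+1}$, hence already contained in $|N|\setminus W_{n+1}$, so $g^{-1}(W_i)$ and $(e\circ h)^{-1}(W_i)$ match the required sets on both sides.

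The one point that needs genuine verification, and the main (modest) obstacle, is that corestriction preserves strongness. The flats of the deletion $N\setminus W_{n+1}$ have the form $\bar F=F\cap(|N|\setminus W_{n+1})$ for a flat $F$ of $N$, exactly as in the equalizer construction of Proposition~\ref{prop:equalizers}. The key calculation is then that $\bar g^{-1}(\bar F)=g^{-1}(F)$: since $g$ never takes values in $W_{n+1}$, the condition $g(x)\in F\cap(|N|\setminus W_{n+1})$ is equivalent to $g(x)\in F$. The right-hand side $g^{-1}(F)$ is a flat of $M$ because $g$ is strong, so $\bar g$ is strong. The reverse direction is automatic: $e$ is an embedding and strong maps compose, so $e\circ h$ is strong whenever $h$ is.

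Finally I would record naturality, which is routine since both transposes act by (co)restriction of the underlying function and therefore commute with precomposition in $A$ and postcomposition in $B$. Equivalently, one may package the whole argument by observing that $D\circ\iota=\mathrm{Id}_{\Matr_{*n}}$, so the unit is the identity and the counit $\epsilon_B\colon\iota(D(B))\to B$ is just the embedding $e$; the two triangle identities then reduce to the facts that deleting $\emptyset$ is the identity and that $e$ restricts to the identity on $N\setminus W_{n+1}$.
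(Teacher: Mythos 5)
Your proposal is correct and follows essentially the same route as the paper: the unit is the identity (deleting $\emptyset$ does nothing), and the transpose of $f\colon A\to D(B)$ is the map with the same underlying function viewed as $\iota(A)\to B$, which is exactly your $h\mapsto e\circ h$ and $g\mapsto\bar g$ correspondence. You simply spell out the details the paper leaves implicit, namely that flats of the deletion are traces of flats of $N$ so corestriction preserves strongness, and that the marked-set preimage conditions transfer.
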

\begin{proof}
  Take the unit $\eta$ to be the identity, and the transpose $\hat{f} \colon (M,Z_1,\ldots,Z_n,\emptyset) \to (M',Z_1',\ldots,Z'_{n+1})$ to have the same underlying function as $f \colon (M,Z_1,\ldots,Z_n) \to (M',Z_1',\ldots,Z'_n)$.
  \qed
\end{proof}

The following matroid operation of extension turns out not to be functorial.

\begin{definition}[\cite{white:matroids}, Proposition 7.3.3]
  The \emph{free extension} of a matroid $M$ by $p$ is defined as the matroid $X(M)$ with $|X(M)|=|M|\cup\{p\}$ and flats
  \[
    \big \{ K \in\mathcal{F}_M \setminus \{|M|\} \big \}
    \cup
    \big \{ K\cup\{p\} \mid K\in\mathcal{F}_M \setminus \mathcal{H}(M)\}.
  \]
\end{definition}

\begin{remark}\label{freeext}
  Let $M$ be the free matroid on $\{a,b\}$ and $N$ the free matroid on $\{a,b,c,d\}$. Let $f\colon M\to N$ be the strong map $f=\{a\mapsto a,b\mapsto b\}$. Then $\mathcal{F}_{X(M)}=\big\{\emptyset,\{a\},\{b\},\{p\},\{a,b,p\}\big\}$ and
  \begin{align*}
    \mathcal{F}_{X(N)}=\big\{&\emptyset,\{a\},\{b\},\{c\},\{d\},\{p\},\\&\{a,b\},\{a,c\},\{a,d\},\{b,c\},\{b,d\},\{c,d\},\{a,p\},\{b,p\},\{c,p\},\{d,p\},\\&\{a,b,c\},\{a,b,d\},\{a,b,p\},\{a,c,d\},\{a,c,p\},\\&\{a,d,p\},\{b,c,d\},\{b,c,p\},\{b,d,p\},\{c,d,p\},\\&\{a,b,c,d,p\}\big\}.
  \end{align*}
  There are no strong maps $X(M)\to X(N)$ that agree with $f$. Hence $X(f)$ cannot be canonically defined in a way that respects identities, and the free extension cannot be functorial.
\end{remark}

It follows that the matroid operation of \emph{truncation} (contraction by $p$ after free extension by $p$) cannot be functorial, because free extension is equivalent to truncation by $p$ after the addition of an isthmus $p$, which is functorial by Proposition~\ref{prop:isthmus}.
A similar counterexample (omitted here) shows that the dual operations of the free extension and the truncation, namely the \emph{free coextension} and the Higgs \emph{lift}, are not functorial either.

Finally we consider \emph{erection}, the inverse matroid operation of truncation. 
The erections of a matroid $M$ form a lattice based on a certain ordering \cite[Chapter~7.5]{white:matroids}, whose top element $E(M)$ is the so-called \emph{free erection} and whose bottom element is $M$ itself; by convention $E(M)=M$ if $M$ has no erections at all.
This operation turns out to be not functorial.

\begin{remark}
  Let $M,N$ be the erectible matroids with the following sets of flats:
  \begin{align*}
    \mathcal{F}(M)
    = \big \{&\emptyset,\\
    &\{0\},\{1\},\{2\},\{3\},\{4\},\\
    &\{0,1\},\{0,2\},\{0,3\},\{0,4\},\{1,2\},\{1,3\},\{1,4\},\{2,3,4\},\\
    &\{0,1,2,3,4\}\big\}\\
    \mathcal{F}(N)=\big\{&\emptyset,\{0\},\{1\},\{2\},\{3\},\{4\},\{0,1,2,3,4\}\big\}.
  \end{align*}
  Then the identity function is a strong map $f \colon M\to N$.
  But there are no strong maps $E(M)\to E(N)$ that agree with $f$, so erection cannot be functorial.
\end{remark}

So far we have considered operations that input a single matroid. The rest of this section considers operations that combine two matroids into a new one, and discusses whether they give monoidal structure on the category $\Matr$ of $\Matrp$.
For example, by Proposition~\ref{prop:coproducts}, the coproduct gives one monoidal structure.

The following operations from matroid theory do not give monoidal structure:
the \emph{sum} or \emph{union} $M \cup N$ is the matroid whose independent sets are the unions of the independent sets of its constituents;
the \emph{product} or \emph{intersection} of $M \cap N$ is the matroid $(M^* \cup N^*)^*$;
the \emph{half-dual sum} is the matroid $M^* \cup N^*$.

\begin{remark}
  Unions do not give monoidal structure: there are matroids $A,B,C,D$ and maps $A\to C$ and $B\to D$ such that there are no maps $A\cup B\to C\cup D$. 
  \begin{align*}
    \mathcal{F}_A = & \; \mathcal{F}_B = \big\{\{\bullet\},\{\bullet,0,1,2\}\big\}, \\
    \mathcal{F}_C = & \; \big \{\{f\},\{f,a\},\{f,b\},\{f,c\},\{f,d\},\{f,e\},\\
    &\;\{f,a,b\},\{f,a,c\},\{f,a,d\},\{f,a,e\},\{f,e,b\},\{f,e,c\},\{f,e,d\},\\
    &\;\{f,a,e,b\},\{f,a,e,c\},\{f,a,e,d\},\{f,a,e,b,c,d\}\big\}, \\
    \mathcal{F}_D=& \; \big\{\{e\},\{e,a\},\{e,b\},\{e,c\},\{e,d\},\{e,f\},\\
    &\;\{e,a,b\},\{e,a,c\},\{e,a,d\},\{e,a,f\},\{e,b,c\},\\
    &\;\{e,b,d\},\{e,b,f\},\{e,c,d\},\{e,c,f\},\{e,d,f\},\\
    &\;\{e,f,a,b\},\{e,f,a,c\},\{e,f,a,d\},\{e,f,b,c\},\{e,f,b,d\},\{e,f,c,d\},\{e,a,b,c,d\},\\
    &\;\{e,a,b,c,d,f\}\big\}
  \end{align*}
  Then $\mathcal{F}_{A\cup B}=\big\{\{\bullet\},\{\bullet,0\},\{\bullet,1\},\{\bullet,2\},\{\bullet,0,1,2\}\big\}$ and $C\cup D$ is the free matroid on $\{a,b,c,d,e,f\}$. Hence $A\cup B$ has a loop but $C\cup D$ does not, so there can be no maps $A\cup B\to C\cup D$, whereas there are maps $A\to C$ and $B\to D$.
\end{remark}

\begin{remark}
  Intersections do not give monoidal structure: there are strong maps $A\to C$ and $B\to D$ such that there are no maps $A\cap B\to C\cap D$. 
  \[
    \mathcal{F}_A = \mathcal{F}_B = \big\{\emptyset,\{1,2\}\big\},
    \qquad
    \mathcal{F}_C = \mathcal{F}_D = \big\{\emptyset,\{x\}\big\}.
  \]
  Then $\mathcal{F}_{A\cap B}=\big\{\{1,2\}\big\}$ and $\mathcal{F}_{C\cap D}=\big\{\emptyset,\{x\}\big\}$. 
  Hence there is a map $1,2\mapsto x$ in each homset $\Matr(A,C)$ and $\Matr(B,D)$, but there can be no map $A\cap B\to C\cap D$ because there is no loop in $C\cap D$ to receive the loops of $A\cap B$.
\end{remark}

\begin{example}
  Half-dual unions do not give monoidal structure: there are maps $A\to C$ and $B\to D$ such that there are no maps $A\cup B^*\to C\cup D^*$. 
  \[
    \mathcal{F}_A=\big\{\{0\}\big\},
    \qquad 
    \mathcal{F}_B=\big\{\emptyset,\{0\}\big\}, 
    \qquad
    \mathcal{F}_C=\mathcal{F}_D=\big\{\{*\}\big\}.
  \]
  Then $\mathcal{F}_{A\cup B^*}=\big\{\{0\}\big\}$ and $\mathcal{F}_{C\cup D^*}=\big\{\emptyset,\{*\}\big\}$. Once again, the homsets $\Matr(A,C)$ and $\Matr(B,D)$ each have to contain at least the map sending everything to the loop, whereas $A\cup B^*$ has a loop that cannot be mapped to anything in $C\cup D^*$.
\end{example}

The \emph{intertwining} of two matroids, defined as a minor-minimal matroid that contains them both as minors, is not a monoidal product either, as it is not always unique up to isomorphism.

We end on a positive note, by showing that altering the category slightly allows for two more monoidal structures.
Write $\Matr_\times$ for the category of matroids with a distinguished element and strong maps preserving the distinguished element.
The \emph{parallel connection} $M || N$ is the coproduct in this category~\cite{white:matroids}.
Explicitly, the ground set of $M||N$ is the disjoint union of $|M|$ and $|N|$, the distinguished elements are identified, and the flats of $M||N$ are the unions of flats in $M$ and in $N$.
This is similar to the coproduct in $\Matr$, except that the distinguished element need not be a loop.

Finally, the \emph{series connection} $MN$ is defined dually to the parallel connection: $(M^*||N^*)^*$, and gives another monoidal structure on $\Matr_\times$. 
This monoidal structure is not a product, but remarkably enough it is naturally affine, in the sense that there are always natural transformations $MN\to M$ and $MN \to N$.
Neither of the parallel connection and the series connection distributes over each other.

\section{The greedy algorithm}\label{sec:greedy}

There exists a well-known characterization of matroids which, intriguingly, is algorithmic in nature and exemplifies the connection between matroids and problems in combinatorics~\cite{oxley:matroids,leelneedauer:greedy}.

\begin{definition}\label{optimization}
  Let $\I$ be a collection of subsets of a finite set $E$ that satisfies the nontrivial and downward closed conditions from Definition~\ref{def:matroid}. Given a function $w \colon E\to\mathbb{R}$ define the associated \emph{weight} function $w \colon 2^E\to\mathbb{R}$ by
  \[
    w(X)=\sum_{x\in X}w(x).
  \]
  The \emph{optimization problem} for the pair $(\I,w)$ is to find a maximal member $B$ of $\I$ of maximum weight.
\end{definition}

\begin{definition}
  The \emph{greedy algorithm} for a pair $(\I,w)$ as in Definition~\ref{optimization} is:
  \begin{enumerate}[label=(\roman*)]
  \item Set $X_0=\emptyset$ and $j=0$.
  \item\label{previousstep} If $E-X_j$ contains an element $e$ such that $X_j\cup\{e\}\in\I$, choose such an element $e_{j+1}$ of maximum weight, let $X_{j+1}=X_j\cup\{e_{j+1}\}$, and go to \ref{nextstep}; otherwise, let $B_G=X_j$ and go to \ref{laststep}.
  \item\label{nextstep} Add 1 to $j$ and go to \ref{previousstep}.
  \item\label{laststep} Stop.
  \end{enumerate}
\end{definition}

\begin{theorem}\label{greedy}
Let $\I$ be a nontrivial and downward closed collection of subsets of a finite set $E$. Then $\I$ is the collection of independent sets of a matroid on $E$ if and only if the greedy algorithm for $(\I,w)$ solves the optimization problem for $(\I,w)$ for all possible weight functions $w \colon E\to\mathbb{R}$ (generalized to $w \colon 2^E\to\mathbb{R}$ as in Definition \ref{optimization}).
\end{theorem}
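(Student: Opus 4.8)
The plan is to prove both directions of the biconditional, establishing that the greedy algorithm is correct precisely when $\I$ satisfies the independence augmentation axiom (the only axiom from Definition~\ref{def:matroid} not already assumed). First I would clarify what ``solves the optimization problem'' means: the greedy output $B_G$ is a maximal member of $\I$, and we require $w(B_G) \geq w(B)$ for every maximal $B \in \I$ and every weight function. A preliminary observation is that, since $\I$ is nontrivial and downward closed, the greedy algorithm always terminates with some maximal independent set, so the content is entirely about optimality of weight.

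For the forward direction (matroid $\Rightarrow$ greedy optimal), I would argue by an exchange argument. Suppose $\I$ is the collection of independent sets of a matroid and let $B_G = \{e_1,\ldots,e_k\}$ be the greedy output, indexed in order of selection (so $w(e_1)\geq w(e_2)\geq\cdots$ after restricting to positive weights; elements of nonpositive weight can be discarded or handled separately). Let $B = \{f_1,\ldots,f_m\}$ be any maximal independent set, similarly ordered by weight. The independence augmentation axiom forces $k = m$ (all bases have the same cardinality). The key step is to show $w(e_i) \geq w(f_i)$ for every $i$: if this failed for some least index $i$, then $\{e_1,\ldots,e_{i-1}\}$ and $\{f_1,\ldots,f_i\}$ are both independent with the latter larger, so by augmentation some $f_j$ (with $j\leq i$, hence $w(f_j)\geq w(f_i) > w(e_i)$) could have been added at stage $i$ of the greedy run, contradicting the greedy choice of $e_i$ as maximum weight. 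Summing $w(e_i)\geq w(f_i)$ over $i$ gives $w(B_G)\geq w(B)$.

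For the converse (greedy optimal $\Rightarrow$ matroid), I would prove the contrapositive: assuming $\I$ fails the augmentation axiom, I construct a weight function on which greedy fails. Concretely, if augmentation fails there exist $I, J \in \I$ with $\card I < \card J$ but no element of $J\setminus I$ can be added to $I$. The hard part is engineering a weight function that coaxes the greedy algorithm into picking out $I$ (or a maximal set extending $I$ of small cardinality relative to what extending $J$ would yield) while a genuinely heavier maximal set lies above $J$. I would assign large equal weights to the elements of $I$, slightly smaller weights to the elements of $J \setminus I$, and negligible (or suitably small) weights elsewhere, tuned so that greedy commits to all of $I$ first, then gets stuck because nothing in $J\setminus I$ extends it, yielding a maximal set whose total weight is beaten by any maximal independent set containing $J$.

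The main obstacle I anticipate is precisely this weight-function construction in the converse: one must ensure simultaneously that (a) greedy actually selects all of $I$ before encountering the obstruction, (b) the resulting greedy output is genuinely suboptimal, and (c) the weights are robust to the downward-closure and the possibly complicated structure of the rest of $\I$. Choosing weights that are generic enough to avoid ties yet structured enough to force the desired greedy trajectory requires care; a clean approach is to use weights from a strictly decreasing sequence assigned by a carefully chosen ordering of $E$ that front-loads $I$. The forward direction, by contrast, is the standard and relatively routine exchange argument, so I expect the bulk of the real work to lie in the failure construction.
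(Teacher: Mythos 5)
Your proposal is correct and follows the standard argument: the forward direction by the exchange/augmentation comparison of the greedy output against an arbitrary maximal set, and the converse by building a weight function (large equal weights on $I$, slightly smaller on $J\setminus I$, negligible elsewhere, with the perturbation chosen smaller than $1/\card I$) that forces greedy into the suboptimal set. The paper itself gives no proof, deferring entirely to the cited reference (Oxley, Theorem 1.8.5), and your outline is essentially that reference's proof, so there is nothing further to compare.
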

\begin{proof}
  See~\cite[Theorem 1.8.5]{oxley:matroids}.
  \qed
\end{proof}

Crucially, this theorem is equivalent to the following statement: ``The greedy algorithm solves the optimization problem if and only if all maximal independent sets have the same cardinality''. It is easy to show that the latter condition is equivalent to the independence augmentation axiom of Definition~\ref{def:matroid} when the other two axioms are given.

We observe that this theorem induces an elegant categorical characterization of matroids. Write \cat{Vect^b_k} for the category of vector spaces over $k$ with a chosen basis $b$ and linear transformations between them.

\begin{lemma}
  Every run of the greedy algorithm produces a maximal chain of epimorphisms in a subcategory of \cat{Vect^b_\mathbb{R}}.
\end{lemma}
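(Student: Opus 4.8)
The plan is to read off from a run of the greedy algorithm the nested family of members of $\I$ that it constructs, and to realize this family as a chain of coordinate quotients of $\mathbb{R}^E$ whose connecting maps are surjections. First I would record what a run produces: starting from $X_0=\emptyset$, each pass through step~\ref{previousstep} either halts or enlarges the current set by a single element $e_{j+1}$, so the run yields a strictly increasing chain
\[
  \emptyset = X_0 \subsetneq X_1 \subsetneq \cdots \subsetneq X_m = B_G
\]
of members of $\I$ with $\card X_{j+1} = \card X_j + 1$, terminating at a set $B_G$ that is maximal in $\I$, since the algorithm stops exactly when no element can be adjoined without leaving $\I$.

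Next I would build the subcategory. Equip $\mathbb{R}^E$ with its standard basis indexed by $E$, and for $X \in \I$ write $\langle X\rangle \subseteq \mathbb{R}^E$ for the coordinate subspace spanned by the basis vectors indexed by $X$. I take as objects the quotients $\mathbb{R}^E/\langle X\rangle$, each carrying the chosen basis given by the images of the coordinates indexed by $E\setminus X$ (ordered by decreasing weight $w$, which is where the $\cat{Vect^b_\mathbb{R}}$ structure and the dependence on the particular run enter), and as morphisms the canonical projections $q_{X,Y}\colon \mathbb{R}^E/\langle X\rangle \to \mathbb{R}^E/\langle Y\rangle$, defined whenever $X\subseteq Y$ in $\I$. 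Each $q_{X,Y}$ is a well-defined linear surjection, hence an epimorphism, and these projections compose correctly, so this is a genuine subcategory. The run then determines the chain of epimorphisms
\[
  \mathbb{R}^E = \mathbb{R}^E/\langle X_0\rangle \twoheadrightarrow \mathbb{R}^E/\langle X_1\rangle \twoheadrightarrow \cdots \twoheadrightarrow \mathbb{R}^E/\langle X_m\rangle,
\]
in which each connecting map has one-dimensional kernel, spanned by the class of the basis vector $e_{j+1}$.

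Finally I would argue maximality on both counts. The chain is \emph{saturated}: because $\card X_{j+1}=\card X_j+1$ there is no member of $\I$ strictly between $X_j$ and $X_{j+1}$, so no object of the subcategory factors $q_{X_j,X_{j+1}}$ nontrivially and each epimorphism is a cover. It is also \emph{non-extendable}: at the top, $X_0=\emptyset$ is the least member of $\I$, so $\mathbb{R}^E$ receives no projection from another object of the subcategory; at the bottom, $B_G$ is maximal in $\I$, so there is no $Y\in\I$ with $B_G\subsetneq Y$ and hence no epimorphism out of $\mathbb{R}^E/\langle B_G\rangle$ in the subcategory. Together these show the run produces a maximal chain of epimorphisms.

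The one step requiring care is the maximality claim, and I expect it to be the main (though still light) obstacle: one must match the two ways a greedy run can end, namely that it never refines (the unit-size increments give saturation) and that it terminates (maximality of $B_G$ in $\I$ gives non-extendability at the bottom), precisely against the two ways a chain in the subcategory can fail to be maximal. Everything else is a routine verification that the coordinate projections $q_{X,Y}$ are composable epimorphisms, so that the assignment $X \mapsto \mathbb{R}^E/\langle X\rangle$ really does land in a subcategory of $\cat{Vect^b_\mathbb{R}}$.
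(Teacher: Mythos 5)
Your proof is correct, and it establishes the lemma by a construction that differs in its details from the paper's. The paper assigns to the $n$-element candidate set the space $\mathbb{R}^n$ carrying the weight vector $(w(b_1),\dots,w(b_n))$, and takes the $n$th step of the run to be the epimorphism $\mathbb{R}^n\to\mathbb{R}^{n-1}$ that projects out the largest coordinate; maximality of the chain is then read off from the fact that the algorithm continues exactly as long as a further such incoming epimorphism (\ie a further candidate element) exists. You instead realize $X_j$ as the coordinate quotient $\mathbb{R}^E/\langle X_j\rangle$, so your spaces have dimension $\card E - j$ rather than $j$, your arrows point along the run rather than against it, and you argue maximality in two separate parts (saturation from the unit-size increments, non-extendability from $X_0=\emptyset$ at one end and maximality of $B_G$ in $\I$ at the other). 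Both are legitimate instantiations of the deliberately loose phrase ``a subcategory of $\cat{Vect^b_\mathbb{R}}$''; your version is more explicit about what the subcategory actually is and why the chain cannot be refined or extended, whereas the paper's version keeps the weights inside the vector spaces themselves, which is what feeds into the subsequent lemmas of Section~\ref{sec:greedy} relating optimality of the greedy algorithm to limits of the induced diagrams. If you wanted your construction to slot into the rest of that development, you would need to carry the weight data along more substantively than as a mere ordering of the chosen basis.
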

\begin{proof}
  Let the final output of the algorithm be the list $B=(b_1,b_2,\hdots,b_r)$. The vector $(w(b_1),w(b_2),\hdots,w(b_r))$ is an element of $\mathbb{R}^r$. At the $n$th step of the algorithm, the candidate output corresponds to a vector in $\mathbb{R}^n$. Then the $n$th step of the algorithm corresponds to an epimorphism $e_n \colon \mathbb{R}^n\to\mathbb{R}^{n-1}$ which projects out the largest element of the vector. The algorithm continues as long as there exist incoming morphisms in the subcategory formed by all such epimorphisms (\ie there exist candidate elements), making the chain maximal in that diagram.
\qed
\end{proof}

This is the categorical equivalent of the fact that the greedy algorithm produces a maximal independent set; the length of the chain equals the cardinality of that set. The following definition makes precise when a partially ordered set is `as wide as it is tall'.

\begin{definition}
  An element $z$ in a partially ordered set \emph{covers} $x$ when $x \leq z$, and if $x \leq y \leq z$ then $x=y$ or $y=z$.
  Define a \emph{square poset} to be a finite partially ordered set $P$ with least element, such that any element $A \in P$ covers exactly $n_A+1$ elements, where $n_A$ is the maximum length of a maximal chain from the least element to $A$.
  A \emph{square functor} is a functor $I \colon P \to \cat{Sub}$ from a square poset $P$ to the category $\cat{Sub}$ of sets and inclusions that is injective on objects and preserves chain lengths.
\end{definition}

Every square functor $I \colon P \to\cat{Sub}$ induces a pair $(\I,E)$ where $\I$ is a nontrivial downwards closed collection of subsets of a set $E$. Namely, define $E$ to be the union of all the sets $S_i$ in the image of $I$, and set $\I=\{S_i\setminus I(0)\}$, where $0$ is the least element of $P$. This is evidently a collection of subsets of $E$, and it contains the empty set. Because $I$ is injective on objects, the number of inclusions into each object $S_i$ is maximal, guaranteeing that all subsets of each member of $\I$ are in the image of $I$. 

\begin{lemma}
  Given a square functor $I \colon \cat{P}\to\cat{Sub}$, the induced pair $(\I,E)$ is the collection of independent sets and ground set of a matroid if and only if every maximal chain in the image of $I$ has the same length; equivalently, if the colimit of the diagram $I \colon C\to\cat{Sub}$ is independent, up to isomorphism, of a maximal chain $C \subseteq P$.
\end{lemma}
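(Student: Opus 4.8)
The plan is to reduce both stated conditions to the cardinality criterion supplied by Theorem~\ref{greedy} together with the remark following it: since $(\I,E)$ is already a nontrivial, downward closed set system, $\I$ is the family of independent sets of a matroid if and only if all maximal members of $\I$ have the same cardinality. It therefore suffices to show that the maximal members of $\I$ all have equal cardinality exactly when every maximal chain in the image of $I$ has the same length.

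First I would pin down the relevant posets. The image of $I$, ordered by inclusion, has least element $I(0)$, and its maximal sets $S_{\max}$ correspond precisely to the maximal members $S_{\max}\setminus I(0)$ of $\I$. A maximal chain in the image is then a saturated chain from $I(0)$ to some maximal $S_{\max}$, and because $I$ preserves chain lengths and is injective on objects, these match the saturated chains in $P$.

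The key step, which I expect to be the main obstacle, is to show that every covering relation $S\subsetneq S'$ in the image adds exactly one element, i.e. $\card(S'\setminus S)=1$. Here I use the fact recorded just before the statement that every subset of a member of $\I$ again lies in the image of $I$: given $x\in S'\setminus S$, and noting $I(0)\subseteq S$ so that $x\notin I(0)$, the set $(S\setminus I(0))\cup\{x\}$ is a subset of $S'\setminus I(0)\in\I$ and hence belongs to $\I$; the corresponding image set is $S\cup\{x\}$, which satisfies $S\subsetneq S\cup\{x\}\subseteq S'$, so covering forces $S\cup\{x\}=S'$. Consequently every saturated chain from $I(0)$ to $S_{\max}$ has length $\card(S_{\max})-\card(I(0))=\card(S_{\max}\setminus I(0))$, the cardinality of the corresponding maximal member of $\I$. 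Equality of all maximal chain lengths is therefore equivalent to equality of the cardinalities of the maximal members of $\I$, which by the first paragraph is equivalent to $(\I,E)$ being a matroid.

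Finally, for the colimit reformulation I would observe that the colimit in $\cat{Sub}$ of the inclusions along a maximal chain $C$ is just the union of the sets in $C$, namely its top set $S_{\max}$. Two such colimits are isomorphic precisely when the corresponding top sets have equal cardinality; since $\card(I(0))$ is fixed across all chains, this is in turn equivalent to the maximal chains having equal length. Thus the colimit of $I\colon C\to\cat{Sub}$ being independent, up to isomorphism, of the chosen maximal chain $C$ coincides with the equal-length condition, which closes the chain of equivalences.
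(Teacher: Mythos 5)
Your proof is correct and follows essentially the same route as the paper's: reduce to the criterion that all maximal members of $\I$ have equal cardinality, observe that each covering step in a maximal chain adds exactly one element so that chain length tracks cardinality, and note that the colimit of a chain of inclusions is its top set. The only difference is that you explicitly justify the one-element-per-cover step (via downward closure of $\I$ and injectivity of $I$), which the paper simply asserts.
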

\begin{proof}
  Each nonidentity inclusion in $C$ adds one element to the domain, therefore the length of the chain equals the cardinality of the final codomain. This statement is therefore equivalent to ``all maximal elements of $\I$ have the same cardinality", which together with the first two axioms defines a matroid. The colimit formulation now follows from the fact that sets with the same cardinality are isomorphic.
  \qed
\end{proof}

\begin{lemma}\label{lastlemma}
  Given a square functor $I \colon P\to\cat{Sub}$, the induced pair $(\I,E)$ is the collection of independent sets and ground set of a matroid if and only if for every contravariant functor $W \colon P\to\cat{Vect^b_\mathbb{R}}$ such that $I$ factors through $W$, the limit of the diagram $W \colon C\to\cat{Vect^b_\mathbb{R}}$ is independent of the maximal chain $C \subseteq P$.
\end{lemma}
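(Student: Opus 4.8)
The plan is to reduce the statement to the previous lemma by computing the limit explicitly and observing that it only records the dimension at the top of the chain. First I would fix a maximal chain $C = (0 = c_0 < c_1 < \cdots < c_k)$ in $P$ and note that, because $W$ is contravariant, its restriction $W|_C$ is the finite tower of linear maps
\[
  W(c_k) \to W(c_{k-1}) \to \cdots \to W(c_0),
\]
where each arrow is the image under $W$ of a covering inclusion of $C$. Viewing this as a diagram on $C^{\mathrm{op}}$, whose initial object is the top element $c_k$, the limit is simply $W(c_k)$: a compatible cone is determined by its component at $c_k$, so $\varprojlim_C W \cong W(c_k)$. This is the dual of the colimit computation behind the previous lemma, where the colimit of the covariant inclusion diagram $I|_C$ is the top set $I(c_k)$.

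Next I would use the hypothesis that $I$ factors through $W$ to pin down the dimension of this limit. The factorization forces the chosen basis of $W(A)$ to be (in bijection with) the set $I(A)\setminus I(0)$ for every object $A$, and forces $W$ to send each covering inclusion to the projection deleting the single new basis vector---exactly the epimorphisms $\mathbb{R}^n \to \mathbb{R}^{n-1}$ produced by a run of the greedy algorithm. As established in the discussion preceding the previous lemma, each covering step along $C$ adds exactly one element, so $\dim_\mathbb{R} W(c_k) = \card(I(c_k)\setminus I(0))$ equals the length $k$ of the chain $C$. Since two objects of \cat{Vect^b_\mathbb{R}} are isomorphic precisely when they have equal dimension, the limit $\varprojlim_C W$ is independent of $C$ (up to isomorphism) if and only if all maximal chains $C$ have the same length.

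It then remains to assemble the biconditional. If $(\I,E)$ is a matroid, the previous lemma gives that all maximal chains have the same length, so by the computation above every admissible $W$ has $\varprojlim_C W$ of constant dimension, hence chain-independent. Conversely, if $(\I,E)$ is not a matroid, the previous lemma supplies maximal chains of distinct lengths; since every admissible $W$ produces a limit whose dimension equals the length of $C$, every such $W$ fails to have a chain-independent limit. Because at least one admissible $W$ exists---namely the canonical weight-vector functor $A \mapsto \mathbb{R}^{I(A)\setminus I(0)}$ with coordinate projections, which is precisely the assignment underlying the greedy algorithm---the universally quantified condition is genuinely violated rather than vacuously satisfied, which completes the contrapositive.

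The hard part will be making the phrase ``$I$ factors through $W$'' precise, since $I$ is covariant while $W$ is contravariant, so no literal composition is meant. The reading I would adopt is that each inclusion $I(c_i)\setminus I(0)\hookrightarrow I(c_j)\setminus I(0)$ is recovered from $W$ as the set of basis vectors of $W(c_j)$ that survive the projection $W(c_j)\to W(c_i)$; this reconciles the variance and simultaneously yields the two facts used above, namely $\dim_\mathbb{R} W(A)=\card(I(A)\setminus I(0))$ and that covering inclusions map to single-coordinate projections so that each step of the tower drops the dimension by exactly one. Once this is fixed, the limit computation and the appeal to the previous lemma are routine, and the only remaining care is the bookkeeping ensuring that each covering step contributes exactly one dimension.
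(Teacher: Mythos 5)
Your proposal is correct and follows essentially the same route as the paper, which simply reduces the statement to the preceding lemma ``by the definition of a limit and the fact that vector spaces of the same dimension are isomorphic''; your computation that the limit over a maximal chain is the top vector space, of dimension equal to the chain length, is exactly the content the paper leaves implicit. Your additional care about the non-vacuousness of the universal quantifier over $W$ and about what ``$I$ factors through $W$'' means goes beyond what the paper records, but does not change the argument.
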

\begin{proof}
  This is equivalent to the above lemma, by the definition of a limit and the fact that vector spaces of the same dimension are isomorphic.
  \qed
\end{proof}

The following theorem summarizes the main result of this section.

\begin{theorem}
  The greedy algorithm solves the optimization problem if and only if the chains in \cat{Vect^b_\mathbb{R}} induced by all runs have the same limit.
\end{theorem}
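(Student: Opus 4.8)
The plan is to assemble the three lemmas of this section with Theorem~\ref{greedy}, reading each run of the greedy algorithm as a maximal chain in a square poset together with its induced contravariant functor into $\cat{Vect^b_\mathbb{R}}$, and then tracking cardinalities through to dimensions of limits.

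First I would set up the dictionary between runs and chains. Realize the given nontrivial, downward closed collection $\I$ on $E$ as the image of a square functor $I \colon P \to \cat{Sub}$, as in the construction preceding Lemma~\ref{lastlemma}. A run of the greedy algorithm for a weight function $w$ picks out the maximal chain $\emptyset = X_0 \subsetneq X_1 \subsetneq \cdots \subsetneq B_G$ of independent sets; this is a maximal chain $C \subseteq P$, and by the lemma relating runs to chains of epimorphisms the weight $w$ turns it into the diagram $W|_C$ of epimorphisms $e_n \colon \mathbb{R}^n \to \mathbb{R}^{n-1}$ for a contravariant $W \colon P \to \cat{Vect^b_\mathbb{R}}$ through which $I$ factors. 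Because $W|_C$ is a chain whose maps all lower dimension by one, any cone over it is determined by its component into the top space, so the limit of $W|_C$ is that top space, of dimension $\card B_G$. Thus the limit of the chain induced by a run records exactly the cardinality of the maximal independent set that the run produces.

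Next I would check that the weight functions realize enough chains. Given any maximal member $B$ of $\I$ and any enumeration of its elements, a strictly decreasing weight assignment on $B$ (below the weights of all other elements) forces the greedy algorithm to follow the corresponding chain, downward closure guaranteeing that each partial set stays independent. Hence as $w$ ranges over all weight functions the induced runs realize every maximal chain $C \subseteq P$, and since vector spaces with the same dimension are isomorphic, ``the chains induced by all runs have the same limit'' is precisely the statement that the limit of $W|_C$ is independent of the maximal chain $C$, which is the hypothesis of Lemma~\ref{lastlemma}.

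It then remains to close the chain of equivalences. By Lemma~\ref{lastlemma}, limit-independence over all such $C$ and $W$ holds if and only if $(\I,E)$ is the independent-set and ground-set data of a matroid; equivalently, by the equicardinality reformulation noted after Theorem~\ref{greedy}, if and only if all maximal members of $\I$ have equal cardinality; and by Theorem~\ref{greedy} this holds if and only if the greedy algorithm solves the optimization problem for every weight function. Concatenating these equivalences gives the theorem. The main obstacle I anticipate is the bookkeeping of the first paragraph: pinning down the informal ``run'' as an honest pair $(C,W)$ with $I$ factoring through $W$, and verifying that the limit of the induced chain of epimorphisms is genuinely the top space of dimension $\card B_G$, so that ``same limit'' faithfully encodes ``same cardinality''.
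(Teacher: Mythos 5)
Your proposal is correct and follows the same route as the paper, whose entire proof is simply ``Combine Theorem~\ref{greedy} and Lemma~\ref{lastlemma}.'' You additionally spell out the bookkeeping the paper leaves implicit (that every maximal chain is realized by some weight function, and that the limit of the induced chain of epimorphisms is the top space of dimension $\card B_G$), which is a faithful elaboration rather than a different argument.
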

\begin{proof}
  Combine Theorem~\ref{greedy} and Lemma~\ref{lastlemma}.
  \qed
\end{proof}

\begin{acknowledgements}
  We are indebted to an anonymous referee for insightful comments that improved this article.
  We thank Nathan Bowler and Jeffrey Giansiracusa for encouragement and valuable input. 
  Lastly, for this amended version of the paper we are very grateful to Joshua Mundinger for pointing out an error in a proof, resulting in the removal of one lemma and one theorem.
\end{acknowledgements}

\bibliographystyle{spmpsci}
\bibliography{matroids}

\end{document}